\def\1{\bm{1}}
\def\mA{{\bm{A}}}
\def\mD{{\bm{D}}}
\def\mI{{\bm{I}}}
\def\mL{{\bm{L}}}
\def\mM{{\bm{M}}}
\def\mO{{\bm{O}}}
\def\mQ{{\bm{Q}}}
\def\mR{{\bm{R}}}
\def\mS{{\bm{S}}}
\def\mT{{\bm{T}}}
\def\mV{{\bm{V}}}
\def\mW{{\bm{W}}}
\DeclareMathAlphabet{\mathsfit}{\encodingdefault}{\sfdefault}{m}{sl}
\SetMathAlphabet{\mathsfit}{bold}{\encodingdefault}{\sfdefault}{bx}{n}
\def\sR{{\mathbb{R}}}
\definecolor{midnightblue}{HTML}{0059b3}
\definecolor{noonblue}{HTML}{e5eef7}
\definecolor{chromered}{HTML}{f14233}
\definecolor{olivedrab}{HTML}{6b8e23}
\definecolor{darkgreen}{HTML}{3d5114}
\newcommand{\norm}[1]{\left\| #1 \right\|}
\newcommand{\cC}{\mathcal{C}}
\newcommand{\cS}{\mathcal{S}}
\newcommand{\cT}{\mathcal{T}}
\newcommand{\mtL}{\tilde{\mL}}
\newcommand{\mbL}{\bar{\mL}}
\newcommand{\del}[1]{}
\newcommand{\R}{\mathbb{R}} % reals
\newcommand{\N}{\mathbb{N}} % positive integers
\newcommand{\eqdef}{:=}
\newcommand{\Exp}[1]{{\mathbb E}\left[#1\right]}
\DeclareMathOperator{\tr}{tr}           % trace
\DeclareMathOperator{\Diag}{Diag}       % Diag(v) = diagonal matrix with v_i on the diagonal
\DeclareMathOperator{\diag}{diag}       % diag(D) = the diagonal vector of matrix D
\newtheorem{assumption}{Assumption}
\newtheorem{lemma}{Lemma}
\newtheorem{theorem}{Theorem}
\newtheorem{proposition}{Proposition}
\newtheorem{corollary}{Corollary}
\theoremstyle{plain}
\theoremstyle{definition}
\newcommand{\brr}[1]{\left( #1 \right)}   % brackets round
\newcommand{\brc}[1]{\left\{ #1 \right\}} % brackets curly
\newcommand{\inner}[2]{\left\langle #1, #2 \right\rangle}
\newcommand{\bbS}{\mathbb{S}}
\newcommand{\nlay}{\ell} 
\newcommand{\Romannumeral}[1]{\uppercase\expandafter{\romannumeral#1}}
\title{Det-CGD: Compressed Gradient Descent with Matrix Stepsizes for Non-Convex Optimization}
\author{
  Hanmin Li \\
  AI Initiative\\
  KAUST, Thuwal, Saudi Arabia \\
  \texttt{hanmin.li@kaust.edu.sa}\\
  \And
  Avetik Karagulyan \\
  AI Initiative\\
  KAUST, Thuwal, Saudi Arabia \\
  \texttt{avetik.karagulyan@kaust.edu.sa}\\
  \And
  Peter Richtárik \\
  AI Initiative\\
  KAUST, Thuwal, Saudi Arabia \\
  \texttt{peter.richtarik@kaust.edu.sa}\\
}
\begin{document}

\maketitle

\addtocontents{toc}{\protect\setcounter{tocdepth}{2}}

\begin{abstract}
  
This paper introduces a new method for minimizing matrix-smooth non-convex objectives through the use of novel Compressed Gradient Descent (CGD) algorithms enhanced with a matrix-valued stepsize. 
The proposed algorithms are theoretically analyzed first in the single-node and subsequently in the distributed settings. 
Our theoretical results reveal that the matrix stepsize in CGD can capture the objective's structure and lead to faster convergence compared to a scalar stepsize. 
As a byproduct of our general results, we emphasize the importance of selecting the compression mechanism and the matrix stepsize in a layer-wise manner, taking advantage of model structure. 
Moreover, we provide theoretical guarantees for free compression, by designing specific layer-wise compressors for the non-convex matrix smooth objectives. Our findings are supported with empirical evidence.\footnote{This work was supported by the KAUST Baseline Research Funding Scheme.}

\end{abstract}

\addtocontents{toc}{\protect\setcounter{tocdepth}{0}}

\section{Introduction}

The minimization of smooth and non-convex functions is a fundamental problem in various domains of applied mathematics.
Most machine learning algorithms rely on solving optimization problems for training and inference, often with structural constraints or non-convex objectives to accurately capture the learning and prediction problems in high-dimensional or non-linear spaces. 
However, non-convex problems are typically NP-hard to solve, leading to the popular approach of relaxing them to convex problems and using traditional methods. 
Direct approaches to non-convex optimization have shown success but their convergence and properties are not well understood, making them challenging for large scale optimization. 
While its convex alternative has been extensively studied and is generally an easier problem, the non-convex setting is of greater practical interest often being the computational bottleneck in many applications.
 
In this paper, we consider the general minimization problem:
\begin{equation}
\min_{x\in \R^d} f(x),
\end{equation}
where $f:\R^d\to \R$ is a differentiable function. 
In order for this problem to have a finite solution we will assume throughout the paper that $f$ is bounded from below. 
\begin{assumption}
  \label{ass:finf} 
  There exists $f^{\inf}\in \R$ such that $f(x)\geq f^{\inf}$ for all $x\in \R^d$. 
\end{assumption} 

The stochastic gradient descent (SGD) algorithm \citep{moulines2011non, bubeck2015convex, gower2019sgd} is one of the most common algorithms to solve this problem. In its most general form, it can be written as
\begin{equation}
x^{k+1} = x^k - \gamma g(x^k),
\end{equation}
where $g(x^k)$ is a stochastic estimator of $\nabla f(x^k)$ and $\gamma > 0$ is a positive scalar stepsize. A particular case of interest is the compressed gradient descent (CGD) algorithm \citep{khirirat2018distributed}, where the estimator $g$ is taken as a compressed alternative of the initial gradient:
\begin{equation}
g(x^k) = \cC(\nabla f(x^k)),
\end{equation}
and the compressor $\cC$ is chosen to be a "sparser" estimator that aims to reduce the communication overhead in distributed or federated settings. 
This is crucial, as highlighted in the seminal paper by \cite{konevcny2016federated}, which showed that the bottleneck of distributed optimization algorithms is the communication complexity.
In order to deal with the limited resources of current devices, there are various compression objectives that are practical to achieve. 
These include also compressing the model broadcasted from server to clients for local training, and reducing the computational burden of local training. 
These objectives are mostly complementary, but compressing gradients has the potential for the greatest practical impact due to slower upload speeds of client connections and the benefits of averaging \cite{kairouz2021advances}. In this paper we will focus on this latter problem.

An important subclass of compressors are the sketches. Sketches are linear operators defined on $\R^d$, i.e., $\cC(y) = \mS y$ for every $y \in \R^d$, where $\mS$ is a random matrix. 
A standard example of such a compressor is the Rand-$k$ compressor, which randomly chooses $k$ entries of its argument and scales them with a scalar multiplier to make the estimator unbiased. 
Instead of communicating all $d$ coordinates of the gradient, one communicates only a subset of size $k$, thus reducing the number of communicated bits  by a factor of $d/k$.
Formally, Rand-$k$ is defined as follows:
$\mS = \sum_{j = 1}^k \frac{d}{k} e_{i_j}^{\top}e_{i_j}^{\top} $, where $i_j$ are the selected coordinates of the input vector.
We refer the reader to \citep{safaryan2022uncertainty} for an overview on compressions.

Besides the assumption that function $f$ is bounded from below, we also assume that it is $\mL$ matrix smooth, as we are trying to take advantage of the entire information contained in the smoothness matrix $\mL$ and the stepsize matrix $\mD$.  
\begin{assumption}[Matrix smoothness] 
  \label{ass:matrix_L} 
  There exists $\mL \in \bbS^{d}_{+}$ such that 
  \begin{equation}
    \label{eq:L-matrix-smoothness}f(x) \leq f(y) + \inner{\nabla f(y)}{x-y} + \frac{1}{2} \inner{\mL (x-y)}{x-y}
  \end{equation}
  holds for all $x,y\in \R^d$. 
\end{assumption}

The assumption of matrix smoothness, which is a generalization of scalar smoothness, has been shown to be a more powerful tool for improving supervised model training. 
In \cite{safaryan2021smoothness}, the authors proposed using smoothness matrices and suggested a novel communication sparsification strategy to reduce communication complexity in distributed optimization. 
The technique was adapted to three distributed optimization algorithms in the convex setting, resulting in significant communication complexity savings and consistently outperforming the baselines. 
The results of this study demonstrate the efficacy of the matrix smoothness assumption in improving distributed optimization algorithms.

The case of block-diagonal smoothness matrices is particularly relevant in various applications, such as neural networks (NN). In this setting, each block corresponds to a layer of the network, and we characterize the smoothness with respect to nodes in the $i$-th layer by a corresponding matrix $\mL_i$. Unlike in the scalar setting, we favor the similarity of certain entries of the argument over the others. This is because the information carried by the layers becomes more complex, while the nodes in the same layers are similar. This phenomenon has been observed visually in various studies, such as those by \cite{yosinski2015understanding} and \cite{zintgraf2017visualizing}. 

Another motivation for using a layer-dependent stepsize has its roots in physics. In nature, the propagation speed of light in media of different densities varies due to frequency variations. Similarly, different layers in neural networks carry different information, metric systems, and scaling. Thus, the stepsizes need to be picked accordingly to achieve optimal convergence.

We study two matrix stepsized CGD-type algorithms and analyze their convergence properties for non-convex matrix-smooth functions. As mentioned earlier, we put special emphasis on the block-diagonal case. We design our sketches and stepsizes in a way that leverages this structure, and we show that in certain cases, we can achieve compression without losing in the overall communication complexity.

\subsection{Related work}

Many successful convex optimization techniques have been adapted for use in the non-convex setting. 
Here is a non-exhaustive list: adaptivity \citep{dvinskikh2019adaptive, zhang2020adaptive}, variance reduction \citep{jreddi2016proximal, li2021page}, and acceleration \citep{guminov2019accelerated}. A paper of particular importance for our work is that of \cite{khaled2020better}, which proposes a unified scheme for analyzing stochastic gradient descent in the non-convex regime. A comprehensive overview of non-convex optimization can be found in \citep{jain2017non, danilova2022recent}.

A classical example of a matrix stepsized method is Newton's method. This method has been popular in the optimization community for a long time \citep{gragg1974optimal, miel1980majorizing, yamamoto1987convergence}. However, computing the stepsize as the inverse Hessian of the current iteration results in significant computational complexity. Instead, quasi-Newton methods use an easily computable estimator to replace the inverse of the Hessian \citep{broyden1965class, dennis1977quasi, al2007overview, al2014broyden}. An example is the Newton-Star algorithm \citep{islamov2021distributed}, which we discuss in \Cref{sec:main_algos}.

\cite{gower2015randomized} analyzed sketched gradient descent by making the compressors unbiased with a sketch-and-project trick. They provided an analysis of the resulting algorithm for the linear feasibility problem. Later, \cite{hanzely2018sega} proposed a variance-reduced version of this method.

Leveraging the layer-wise structure of neural networks has been widely studied for optimizing the training loss function. For example, \citep{zheng2019layer} propose SGD with different scalar stepsizes for each layer, \citep{yu2017block, ginsburg2019stochastic} propose layer-wise normalization for Stochastic Normalized Gradient Descent, and \citep{dutta2020discrepancy, wang2022theoretically} propose layer-wise compression in the distributed setting.

DCGD, proposed by \cite{khirirat2018distributed}, has since been improved in various ways, such as in \citep{horvath2019natural, li2020acceleration}. There is also a large body of literature on other federated learning algorithms with unbiased compressors \citep{alistarh2017qsgd, mishchenko2019distributed, gorbunov2021marina,pmlr-v162-mishchenko22b,maranjyan2022gradskip,horvath2023stochastic}.

\subsection{Contributions}

Our paper contributes in the following ways:
\begin{itemize}
\item We propose two novel matrix stepsize sketch CGD algorithms in \Cref{sec:main_algos}, which, to the best of our knowledge, are the first attempts to analyze a fixed matrix stepsize for non-convex optimization. 
We present a unified theorem in \Cref{sec:main-results} that guarantees stationarity for minimizing matrix-smooth non-convex functions. 
The results shows that taking our algorithms improve on their scalar alternatives. 
The complexities are summarized in \Cref{Table:comm-complex-single-node} for some particular cases.
\item We design our algorithms' sketches and stepsize to take advantage of the layer-wise structure of neural networks, assuming that the smoothness matrix is block-diagonal. 
In \Cref{sec:block}, we prove that our algorithms achieve better convergence than classical methods.
\item Assuming the that the server-to-client communication is less expensive 
\cite{konevcny2016federated,kairouz2021advances}, we propose distributed versions of our algorithms in \Cref{sec:dist-main}, following the standard FL scheme, and prove weighted stationarity guarantees. 
Our theorem recovers the result for DCGD in the scalar case and improves it in general.
\item We validate our theoretical results with experiments. The plots and framework are provided in the Appendix.
\end{itemize}

\subsection{Preliminaries}

The usual Euclidean norm on $\R^d$ is defined as $\norm{\cdot}$. 
We use bold capital letters to denote matrices. By $\mI_d$ we denote the $d\times d$ identity matrix, and by $\mO_d$ we denote the 
$d\times d$ zero matrix.
Let $\bbS^{d}_{++}$ (resp.\  $\bbS^{d}_{+}$) be the set of $d\times d$ symmetric positive definite (resp.\ semi-definite) matrices. Given  $\mQ \in \bbS^{d}_{++}$ and $x\in \R^d$, we write $\norm{x}_{\mQ}\eqdef \sqrt{\inner{\mQ x}{x}} ,$ 
where $\inner{\cdot}{\cdot}$ is the standard Euclidean inner product on $\R^d$.
For a matrix $\mA \in \bbS_{++}^d$, we define by  $\lambda_{\max}(\mA)$ (resp. $\lambda_{\min}(\mA)$) the largest (resp. smallest) eigenvalue of the matrix $\mA$. 
Let $\mA_i \in \R^{d_i \times d_i}$ and $d = d_1 + \ldots + d_\nlay$. 
Then the matrix $\mA = \Diag(\mA_1,\ldots,\mA_\nlay)$ is defined as a block diagonal $d\times d$ matrix where the $i$-th block is equal to $\mA_i$. 
We will use $\diag(\mA) \in \R^{d\times d}$ to denote the diagonal of any matrix $\mA\in\R^{d\times d}$.
Given a function $f : \R^d \rightarrow \R$, its gradient and its Hessian at point $x \in \R^d$ are respectively denoted as $\nabla f(x)$ and $\nabla^2 f(x)$.

\section{The algorithms}\label{sec:main_algos}

Below we define our two main algorithms:
\begin{equation} 
  x^{k+1} = x^k - \mD \mS^k \nabla f(x^k), \tag{det-CGD1}
  \label{eq:alg1}
\end{equation}
and 
\begin{equation}
  x^{k+1} = x^k - \mT^k\mD\nabla f(x^k). \tag{det-CGD2}
  \label{eq:alg2}
\end{equation}
Here, $\mD \in \bbS_{++}^d$ is the fixed stepsize matrix. The sequences of random matrices  $\mS^k$ and $\mT^k$  satisfy the next assumption.
\begin{assumption}\label{ass:sketch}
  We will assume that the random sketches that appear in our algorithms are i.i.d., unbiased, symmetric and positive semi-definite for each algorithm. That is 
  \begin{align}
    &\mS^k, \mT^k \in \mathbb{S}^d_{+}, 
    \quad \mS^k \overset{iid}{\sim} \cS \quad \text{and} \quad
    \mT^k \overset{iid}{\sim} \cT \notag\\
    &\Exp{\mS^k}= \Exp{\mT^k} = \mI_d, \quad \text{for every} \quad k\in \N\notag.
  \end{align}
\end{assumption}

A simple instance of \ref{eq:alg1} and \ref{eq:alg2} is the vanilla GD. 
Indeed, if $\mS^k = \mT^k = \mI_d$ and $\mD = \gamma \mI_d$, then $x^{k+1} = x^k - \gamma \nabla f(x^k)$.
In general, one may view these algorithms as   Newton-type methods. 
In particular, our setting includes the Newton Star (NS) algorithm by \cite{islamov2021distributed}:
\begin{equation} 
  x^{k+1} = x^k - \brr{\nabla^2 f(x^{\inf})}^{-1} \nabla f(x^k). \tag{NS}
\end{equation} 
The authors prove that in the convex case it converges to the unique solution $x^{\inf}$   locally quadratically, provided certain assumptions are met. 
However, it is not a practical method as it requires knowledge of the Hessian at the optimal point. 
This method, nevertheless, hints that constant matrix stepsize can yield fast convergence guarantees. 
Our results allow us to choose the $\mD$ depending on the smoothness matrix $\mL$. 
The latter can be seen as a uniform upper bound on the Hessian.

The difference between  \ref{eq:alg1} and \ref{eq:alg2} is the update rule.
In particular, the order of the sketch and the stepsize is interchanged. 
When the sketch $\mS$ and the stepsize $\mD$ are commutative w.r.t. matrix product, the algorithms become equivalent. 
In general, a simple calculation shows that if we take
\begin{equation}\label{eq:DT-relation}
    \mT^k = \mD\mS^k\mD^{-1},
\end{equation}
then \ref{eq:alg1} and \ref{eq:alg2} are the same.
Defining $\mT^k$ according to \eqref{eq:DT-relation}, we recover the unbiasedness condition:
\begin{equation}\label{eq:TS-formula}
  \Exp{\mT^k} = \mD\Exp{\mS^k}\mD^{-1} = \mI_d.
\end{equation} 
However, in general  $\mD\Exp{\mS^k}\mD^{-1}$ is not necessarily symmetric, which contradicts to \Cref{ass:sketch}. 
Thus, \ref{eq:alg1} and \ref{eq:alg2} are not equivalent for our purposes.

\section{Main results}\label{sec:main-results}
Before we state the main result, we present a stepsize condition for \ref{eq:alg1} 
and \ref{eq:alg2}, respectively:
\begin{equation}
  \label{eq:ineq_D} 
  \begin{aligned}
    \Exp{\mS^k \mD \mL \mD \mS^k} \preceq \mD,
  \end{aligned}
\end{equation}
and
\begin{equation}
      \label{eq:ineq_D-var}
  \begin{aligned}
  \Exp{\mD \mT^k \mL \mT^k \mD} \preceq \mD.
  \end{aligned}
\end{equation}
In the case of vanilla GD \eqref{eq:ineq_D} and \eqref{eq:ineq_D-var} become $\gamma < L^{-1}$, which is the standard condition for convergence.

Below is the main convergence theorem for both algorithms in the single-node regime.
\begin{theorem}\label{thm:main-D}
  Suppose that Assumptions \ref{ass:finf}-\ref{ass:sketch} are satisfied. 
  Then,  for each $k\geq 0$
  \begin{equation}
    \label{eq:main-D} 
    \frac{1}{K} \sum_{k=0}^{K-1} \Exp{\norm{\nabla f(x^k)}_{\mD}^2} \leq \frac{2 (f(x^0) - f^{\inf})}{K},
  \end{equation}
  if one of the below conditions is true:
  \begin{itemize}
    \item[i)] The vectors $x^k$ are the iterates of \ref{eq:alg1} and $\mD$ satisfies \eqref{eq:ineq_D}$;$
    \item[ii)] The vectors $x^k$ are the iterates of \ref{eq:alg2} and $\mD$ satisfies \eqref{eq:ineq_D-var}.
  \end{itemize}
  
\end{theorem}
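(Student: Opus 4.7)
The plan is to derive a one-step descent inequality for each algorithm separately, telescope it, and use the lower bound on $f$. Both parts reduce to the same chain of manipulations, so I would state the lemma and then apply it twice with the bookkeeping swapped.

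First, I would apply matrix smoothness (Assumption~\ref{ass:matrix_L}) at the pair $(x^{k+1}, x^k)$, substitute the update rule, and expand. For \ref{eq:alg1}, writing $g_k \eqdef \nabla f(x^k)$ and using symmetry of $\mD$ and $\mS^k$,
\begin{equation*}
f(x^{k+1}) \leq f(x^k) - \inp{g_k}{\mD \mS^k g_k} + \tfrac{1}{2} \inp{g_k}{\mS^k \mD \mL \mD \mS^k g_k}.
\end{equation*}
Now take conditional expectation given $x^k$. Independence of $\mS^k$ from $x^k$ plus $\Exp{\mS^k}=\mI_d$ collapses the linear term to $-\norm{g_k}_{\mD}^2$, while the quadratic term becomes $\tfrac{1}{2}\inp{g_k}{\Exp{\mS^k\mD\mL\mD\mS^k} g_k}$. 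Plugging in the stepsize condition \eqref{eq:ineq_D} bounds this by $\tfrac{1}{2}\norm{g_k}_{\mD}^2$, yielding
\begin{equation*}
\ExpCond{f(x^{k+1})}{x^k} \leq f(x^k) - \tfrac{1}{2}\norm{\nabla f(x^k)}_{\mD}^2.
\end{equation*}

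For \ref{eq:alg2}, the argument is identical after replacing $\mD\mS^k$ by $\mT^k\mD$ in the expansion. The one subtlety is that $\mT^k\mD$ is not symmetric in general, so the linear term is $\inp{g_k}{\mT^k\mD g_k}$; however this is a scalar and equals its own transpose $\inp{g_k}{\mD \mT^k g_k}$, so upon taking expectations it still reduces to $\norm{g_k}_{\mD}^2$ via $\Exp{\mT^k}=\mI_d$. The quadratic term becomes $\inp{g_k}{\Exp{\mD\mT^k\mL\mT^k\mD} g_k}$, which is controlled exactly by \eqref{eq:ineq_D-var}. The resulting descent inequality is the same as in case (i).

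From here the proof finishes routinely: take full expectation, rearrange to
\begin{equation*}
\Exp{\norm{\nabla f(x^k)}_{\mD}^2} \leq 2\brr{\Exp{f(x^k)} - \Exp{f(x^{k+1})}},
\end{equation*}
sum from $k=0$ to $K-1$ so that the right-hand side telescopes to $2(f(x^0) - \Exp{f(x^K)})$, and then lower bound $\Exp{f(x^K)}$ by $f^{\inf}$ via Assumption~\ref{ass:finf}. Dividing by $K$ gives \eqref{eq:main-D}. The only non-mechanical step is the symmetrization trick for case (ii) — this is where an author could easily stumble because $\mT^k\mD$ is asymmetric, so I would flag it explicitly as the main (minor) obstacle; everything else is direct use of unbiasedness together with the Loewner-order stepsize conditions.
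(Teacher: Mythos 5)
Your proposal is correct and follows essentially the same route as the paper's proof: matrix smoothness at $(x^{k+1},x^k)$, conditional expectation with unbiasedness to collapse the linear term, the Loewner stepsize condition to bound the quadratic term by $\tfrac12\norm{\nabla f(x^k)}_{\mD}^2$, then telescoping with the tower property and Assumption~\ref{ass:finf}. The symmetrization remark for case (ii) is a fair observation but handled the same way implicitly in the paper via $\Exp{\mT^k}=\mI_d$ and the symmetry of $\mT^k$ and $\mD$.
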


It is important to note that \Cref{thm:main-D} yields the same convergence rate for any $\mD \in \bbS^d_{++}$, despite the fact that the matrix norms on the left-hand side cannot be compared for different weight matrices. To ensure comparability of the right-hand side of \eqref{eq:main-D}, it is necessary to normalize the weight matrix $\mD$ that is used to measure the gradient norm. We propose using determinant normalization, which involves dividing both sides of \eqref{eq:main-D} by $\det(\mD)^{1/d}$, yielding the following:  
\begin{equation}
    \label{eq:main-D-normalized-det_1} 
    \frac{1}{K} \sum_{k=0}^{K-1} \Exp{\norm{\nabla f(x^k)}_{ \frac{\mD}{\det(\mD)^{1/d}}}^2} \leq \frac{2(f(x^0) - f^{\inf})}{\det(\mD)^{1/d} K}.
  \end{equation}
This normalization is meaningful because adjusting the weight matrix to $\frac{\mD}{\det(\mD)^{1/d}}$ allows its determinant to be $1$, making the norm on the left-hand side comparable to the standard Euclidean norm. It is important to note that the volume of the normalized ellipsoid $\big\{ x\in \R^d \;:\; \norm{x}_{ {\mD}/{\det(\mD)^{1/d}}}^2 \leq 1 \big\}$ does not depend on the choice of $\mD\in \bbS^d_{++}$. Therefore, the results of \eqref{eq:main-D} are comparable across different $\mD$ in the sense that the right-hand side of \eqref{eq:main-D} measures the volume of the ellipsoid containing the gradient.

\subsection{Optimal matrix stepsize}

In this section, we describe how to choose the optimal stepsize that minimizes the iteration complexity.
The problem is easier for \ref{eq:alg2}. 
We notice that \eqref{eq:ineq_D-var} can be explicitly solved. 
Specifically, it is equivalent to
\begin{equation}\label{eq:opt-D-2}
\mD \preceq \left(\Exp{\mT^k\mL\mT^k}\right)^{-1}.
\end{equation}
We want to emphasize that the RHS matrix is invertible despite the sketches not being so. Indeed. The map $h: \mT \rightarrow \mT \mL\mT$ is convex on $\bbS_{+}^d$. Therefore, Jensen's inequality implies
\begin{equation*}
\Exp{\mT^k\mL\mT^k} \succeq \Exp{\mT^k}\mL\Exp{\mT^k} = \mL \succ \mO_d.
\end{equation*}
This explicit condition on $\mD$ can assist in determining the optimal stepsize.
 Since both $\mD$ and $(\mT^k \mL \mT^k)^{-1}$ are positive definite, then the right-hand side of \eqref{eq:main-D-normalized-det_1} is minimized exactly when 
 \begin{equation}\label{eq:opt-D-alg2}
   \mD = (\mT^k \mL \mT^k)^{-1}.
 \end{equation}

The situation is different for \ref{eq:alg1}. 
According to \eqref{eq:main-D-normalized-det_1}, the optimal $\mD$ is defined as the solution of the following constrained optimization problem:
\begin{eqnarray}
  \label{prob:log-det}
  \text{minimize} && \log \det (\mD^{-1}) \notag \\
  \text{subject to} && \Exp{\mS^k \mD \mL \mD \mS^k} \preceq \mD  \\
  && \mD \in \bbS^d_{++}. \notag
\end{eqnarray}
\begin{proposition}
  \label{thm:opt-D}
  The optimization problem \eqref{prob:log-det} with respect to stepsize matrix $\mD \in \bbS^d_{++}$, is a convex optimization problem with convex constraint.
\end{proposition}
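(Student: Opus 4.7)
My plan is to verify two facts. First, the objective $\mD \mapsto \log\det(\mD^{-1}) = -\log\det(\mD)$ is convex (in fact strictly convex) on the open cone $\bbS^d_{++}$; this is a classical result that I would simply cite. All the content of the proposition lies in establishing convexity of the feasible region cut out by $\Exp{\mS^k \mD \mL \mD \mS^k} \preceq \mD$ together with $\mD \in \bbS^d_{++}$.

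The challenge is that $\mD \mapsto \Exp{\mS^k \mD \mL \mD \mS^k}$ is matrix-quadratic in $\mD$, so the constraint is not a linear matrix inequality and convexity of its solution set is not automatic. My approach is to test the Loewner inequality against arbitrary $v \in \R^d$ and exploit Assumption \ref{ass:matrix_L}, which lets me write $\mL = \mL^{1/2} \mL^{1/2}$. For each realization of $\mS^k$ and each $v$, this yields
\begin{equation*}
v^\top \mS^k \mD \mL \mD \mS^k v \;=\; \|\mL^{1/2} \mD \mS^k v\|^2,
\end{equation*}
which is the composition of the affine map $\mD \mapsto \mL^{1/2} \mD \mS^k v$ with the convex function $\|\cdot\|^2$, hence convex in $\mD$. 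Convexity is preserved under expectation, so $\mD \mapsto v^\top \Exp{\mS^k \mD \mL \mD \mS^k} v$ is convex.

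Consequently, for every fixed $v \in \R^d$ the map $\mD \mapsto v^\top \mD v - v^\top \Exp{\mS^k \mD \mL \mD \mS^k} v$ is linear minus convex, hence concave, and its superlevel set at $0$ is a convex subset of $\bbS^d$. By definition of the Loewner order, the constraint set is exactly the intersection of these superlevel sets over all $v \in \R^d$; intersecting further with the convex open cone $\bbS^d_{++}$ presents the feasible region as an intersection of convex sets, hence convex.

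I do not anticipate a serious obstacle. The only non-routine observation is the \emph{linear-minus-convex} decomposition exposed by testing against $v$ and using the square root of $\mL$; everything else (convexity of $\|\cdot\|^2$, composition with affine maps, preservation under expectation, convexity of superlevel sets of concave functions, and intersections of convex sets) is textbook.
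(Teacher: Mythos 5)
Your proof is correct, and it follows the same overall skeleton as the paper's: both reduce the Loewner constraint to a family of scalar quadratic-form inequalities indexed by test vectors $v$, show that each inequality cuts out a convex set of $\mD$, and conclude by intersecting over all $v$ (and with $\bbS^d_{++}$). Where you differ is in how the convexity of $\mD \mapsto v^\top \Exp{\mS^k\mD\mL\mD\mS^k}v$ is established. The paper uses linearity and cyclicity of the trace to push the expectation inside and rewrite this quantity as $\tr\left(\mL^{1/2}\mD\mR\mD\mL^{1/2}\right)$ with the fixed matrix $\mR = \Exp{\mS^k vv^\top\mS^k}\in\bbS^d_+$, and then proves convexity of that trace form from first principles in a separate lemma (\Cref{lemma:convexity-func}), by expanding $f(\alpha\mD_1+(1-\alpha)\mD_2)$ and reducing everything to the nonnegativity of $\tr\left(\mL^{1/2}(\mD_1-\mD_2)\mR(\mD_1-\mD_2)\mL^{1/2}\right)$. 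You instead observe that, for each realization of $\mS^k$, the quadratic form equals $\left\|\mL^{1/2}\mD\mS^k v\right\|^2$, which is convex in $\mD$ as the squared norm of a linear image of $\mD$, and that convexity survives taking the expectation; the constraint is then a superlevel set of a concave (linear-minus-convex) function. Your route is more elementary and self-contained --- it needs no trace manipulation and no auxiliary lemma --- whereas the paper's trace formulation has the side benefit of producing the explicit function $g_v(\mD)$ that it reuses essentially verbatim in the distributed-setting convexity proof (\Cref{prop-dist-convex}). Your explicit remark that the objective $\log\det(\mD^{-1})$ is convex on $\bbS^d_{++}$ covers a point the paper's proof leaves implicit.
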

The proof of this proposition can be found in the Appendix. 
It is based on the reformulation of the constraint to its equivalent quadratic form inequality. 
Using the trace trick, we can prove that for every vector chosen in the quadratic form, it is convex. 
Since the intersection of convex sets is convex, we conclude the proof.

One could consider using the {\tt CVXPY} \citep{diamond2016cvxpy} package to solve \eqref{prob:log-det}, provided that it is first transformed into a Disciplined Convex Programming (DCP) form \citep{grant2006disciplined}. 
Nevertheless, \eqref{eq:ineq_D} is not recognized as a DCP constraint in the general case. 
To make {\tt CVXPY} applicable, additional steps tailored to the problem at hand must be taken.

\begin{table*}[ht]\tiny
  \centering
  \renewcommand{\arraystretch}{1.5}
    \caption{Summary of communication complexities of \ref{eq:alg1} and \ref{eq:alg2} with different sketches and stepsize matrices. The $\mD_i$ here for \ref{eq:alg1} is $\mW_i$ with the optimal scaling determined using \Cref{thm:block_diag}, for \ref{eq:alg2} it is the optimal stepsize matrix defined in \eqref{eq:opt-D-2}. The constant $2(f(x^0) - f^{\inf})/\varepsilon^2$ is hidden, $\nlay$ is the number of layers, $k_i$ is the mini-batch size for the $i$-th layer if we use the rand-$k$ sketch. The notation $\mtL_{i, k}$ is defined as $\frac{d-k}{d-1}\diag(\mL_i)+\frac{k-1}{d-1}\mL_i$.}
  \vspace{.3cm}
    \begin{tabular}{p{0.01\textwidth}p{0.065\textwidth}p{0.19\textwidth}p{0.36\textwidth}p{0.22\textwidth}}
      \toprule
      No. & The method & $\brr{\mS_i^k, \mD_i}$ & $l\geq 1$, $d_i$, $k_i$, $\sum_{i=1}^{\nlay}k_i = k$, layer structure &  $l=1$, $k_i = k$, general structure\\ 
      \midrule
      1. & \ref{eq:alg1} & $\brr{\mI_d,\gamma\mL_i^{-1}}$ & $d \cdot \det(\mL)^{1/d}$ 
      & $d \cdot \det(\mL)^{1/d}$  \\
      2. & \ref{eq:alg1} & $\brr{\mI_d,\gamma\diag^{-1}(\mL_i)}$ & $d \cdot \det\big(\diag(\mL)\big)^{1/d}$ & $d \cdot \det\big(\diag(\mL)\big)^{1/d}$ \\ 
      3. & \ref{eq:alg1} & $\brr{\mI_d,\gamma\mI_{d_i}}$ & $d \cdot \left(\prod_{i=1}^{l}\lambda_{\max}^{d_i}(\mL_i)\right)^{1/d}$ & $d \cdot \lambda_{\max}(\mL)$ \\
      4. & \ref{eq:alg1} & $\brr{\text{rand-}1,\gamma\mI_{d_i}}$ & $\nlay \cdot \left(\prod_{i=1}^l d_i^{d_i}\left(\max_j(\mL_i)_{jj}\right)^{d_i}\right)^{1/d}$  & $d \cdot \max_j(\mL_{jj})$ \\
      5. & \ref{eq:alg1} &  $\brr{\text{rand-}1, \gamma\mL_i^{-1}}$ & $\nlay \cdot \left(\frac{\prod_{i=1}^{l}d_i^{d_i}\lambda_{\max}^{d_i}\left(\mL_i^{\frac{1}{2}}\diag(\mL_i^{-1})\mL_i^{\frac{1}{2}}\right)}{\prod_{i=1}^{l}\det(\mL_i^{-1})}\right)^{{1/d}}$ & $\frac{d\lambda_{\max}\left(\mL^\frac{1}{2}\diag\brr{\mL^{-1}}\mL^\frac{1}{2}\right)}{\det\left(\mL^{-1}\right)^{{1/d}}}$  \\
      6. & \ref{eq:alg1} &  $\brr{\text{rand-}1,\gamma\mL_i^{-1/2}}$ & $\nlay \cdot \left(\frac{\prod_{i=1}^l d_i^{d_i}\lambda_{\max}^{d_i}(\mL_i^{1/2})}{\prod_{i=1}^{l}\det (\mL_i^{-1/2})}\right)^{1/d}$ & $d \cdot \lambda_{\max}^{1/2}(\mL)\det(\mL)^{{1}/\brr{2d}}$ \\
      7. & \ref{eq:alg1} &  $\brr{\text{rand-}1,   \gamma\diag^{-1}(\mL_i)}$ & $\nlay \cdot \left(\frac{\prod_{i=1}^{l}d_i^{d_i}}{\prod_{j=1}^d(\mL_{jj}^{-1})}\right)^{{1/d}}$ & $d\cdot  \det\big(\diag(\mL)\big)^{1/d}$ \\
      8. & \ref{eq:alg1} &  $\brr{\text{rand-}k_i, \gamma\diag^{-1}(\mL_i)}$ & $k\cdot\left(\prod_{i=1}^{l}\left(\frac{d_i}{k_i}\right)^{d_i} \det\big(\diag(\mL)\big)\right)^{1/d}$ & $d \cdot   \det\big(\diag(\mL)\big)^{1/d}$\\
      \midrule
      9. & \ref{eq:alg2} & $\brr{\mI_d,\mL_i^{-1}}$ & $d \cdot \det(\mL)^{1/d}$ & $d \cdot \det(\mL)^{1/d}$ \\
      10. & \ref{eq:alg2} &  $\brr{\text{rand-}1, \frac{\diag^{-1}(\mL_i)}{d_i}}$ & $\nlay \cdot \left(\prod_{i=1}^{l}d_i^{d_i}\right)^{1/d}\det(\diag{\mL})^{1/d}$ & $d\cdot\det(\diag(\mL))^{1/d}$ \\
      11. &  \ref{eq:alg2} &  $\brr{\text{rand-}k, \frac{k_i}{d_i}\mtL_{i, k_i}^{-1}}$ & $k\cdot\left(\prod_{i=1}^{l}\left(\frac{d_i}{k_i}\right)^{\frac{d_i}{d}}\right)\left(\prod_{i=1}^{l}\det(\mtL_{i, k_i})\right)^{1/d}$  & $d\cdot\det(\mtL_{1,k})$\\
      12. & \ref{eq:alg2} & $\brr{\text{Bern-}q_i , q_i\mL_i^{-1}}$ & $\left(\sum_{i=1}^{l}q_id_i\right)\cdot\prod_{i=1}^{l}\left(\frac{1}{q_i}\right)^\frac{d_i}{d}\det(\mL)^{1/d}$ & $d\cdot \det(\mL)^{1/d}$\\
      \midrule
      13. & GD & $\brr{\mI_d,\lambda^{-1}_{\max}(\mL) \mI_d}$ & N/A & $d \cdot \lambda_{\max}(\mL)$ \\
      \bottomrule
    \end{tabular}
    \label{Table:comm-complex-single-node}
  \end{table*}

\section{Leveraging the layer-wise structure}\label{sec:block}

In this section we focus on the block-diagonal case of $\mL$ for both \ref{eq:alg1} and \ref{eq:alg2}.   
In particular, we propose hyper-parameters of \ref{eq:alg1} designed specifically for training NNs. 
Let us assume that $\mL = \Diag(\mL_1,\ldots,\mL_\nlay)$, where $\mL_i \in \bbS_{++}^{d_i}$. 
This setting is a generalization of the classical smoothness condition, as in the latter case $\mL_i = L \mI_{d_i}$ for all $i = 1, \ldots, \nlay$. 
Respectively, we choose both the sketches and the stepsize to be block diagonal:
$\mD = \Diag(\mD_1,\ldots,\mD_\nlay) $ and $\mS^k = \Diag(\mS^k_1,\ldots,\mS^k_\nlay)$, where 
$\mD_i,\mS^k_i \in \bbS_{++}^{d_i}$.

Let us notice that the left hand side of the inequality constraint in \eqref{prob:log-det} has quadratic dependence on $\mD$, while the right hand side is linear. 
Thus, for every matrix $\mW \in \bbS_{++}^d$, there exists $\gamma > 0$ such that 
\begin{equation*}
  \gamma^2 \lambda_{\max} \left(\Exp{\mS^k \mW \mL \mW \mS^k} \right) \leq \gamma \lambda_{\min} (\mW).
\end{equation*}
Therefore, for $\gamma \mW$ we deduce
\begin{equation}
\Exp{\mS^k (\gamma \mW) \mL  (\gamma \mW)  \mS^k} 
\preceq \gamma^2 \lambda_{\max} \left(\Exp{\mS^k \mW \mL \mW \mS^k} \right) \mI_d
\preceq \gamma \lambda_{\min} (\mW) \mI_d \preceq \gamma \mW.
\end{equation}
The following theorem is based on this simple fact applied to the corresponding blocks of the matrices $\mD,\mL,\mS^k$ for \ref{eq:alg1}.
\begin{theorem}\label{thm:block_diag} 
Let $f:\R^d\to \R$ satisfy Assumptions~\ref{ass:finf} and \ref{ass:matrix_L}, with $\mL$ admitting the layer-separable structure $\mL=\Diag(\mL_1,\dots,\mL_\nlay)$, where $\mL_1,\dots,\mL_\nlay \in \bbS^{d_i}_{++}$.   
Choose random matrices $\mS^k_1,\dots,\mS^k_\nlay \in \bbS^d_{+}$ to satisfy \Cref{ass:sketch} for all $i\in [\nlay]$, and let $\mS^k \eqdef \Diag(\mS^k_1,\dots,\mS^k_{\nlay})$.  Furthermore, choose matrices $\mW_1,\dots,\mW_\nlay \in \bbS^d_{++}$ and scalars $\gamma_1,\dots,\gamma_\nlay >0$ such that
\begin{equation}\label{eq:thm-gamma_i}
\gamma_i \leq {\lambda_{\max}^{-1}\left(\Exp{\mW_i^{-1/2} \mS^k_i \mW_i \mL_i \mW_i \mS^k_i\mW_i^{-1/2}}\right)} \qquad \forall i\in [\nlay] .\end{equation}
Letting  $\mW\eqdef \Diag(\mW_1,\dots,\mW_\nlay)$, $\Gamma\eqdef \Diag(\gamma_1\mI_{d_1},\dots,\gamma_\nlay\mI_{d_\nlay})$ and  $\mD\eqdef \Gamma \mW$, we get
\begin{equation}\label{eq:main-D-normalized-det} 
  \frac{1}{K} \sum_{k=0}^{K-1} 
  \Exp{\norm{\nabla f(x^k)}^2_{\frac{\Gamma \mW}{\det\left(\Gamma \mW\right)^{1/d}}}} \leq   \frac{2(f(x^0) - f^{\inf})}{ \det\left(\Gamma \mW\right)^{1/d}\, K}.
\end{equation}

\end{theorem}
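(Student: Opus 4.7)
The plan is to derive \Cref{thm:block_diag} as a direct consequence of \Cref{thm:main-D}(i) applied to \ref{eq:alg1} with the stepsize matrix $\mD = \Gamma\mW = \Diag(\gamma_1\mW_1,\dots,\gamma_\nlay \mW_\nlay)$. The only nontrivial thing to verify is the matrix inequality \eqref{eq:ineq_D}; once this is established, the conclusion \eqref{eq:main-D-normalized-det} follows by dividing both sides of \eqref{eq:main-D} by $\det(\Gamma\mW)^{1/d}$, exactly as in the discussion around \eqref{eq:main-D-normalized-det_1}.

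To verify \eqref{eq:ineq_D}, I would first exploit the compatible block-diagonal structure of $\mS^k$, $\mD$, and $\mL$. Since all three matrices are block-diagonal with the same block sizes $d_1,\dots,d_\nlay$, so are their products and expectations, and the $i$-th diagonal block of $\mS^k \mD \mL \mD \mS^k$ is $\gamma_i^2\, \mS^k_i \mW_i \mL_i \mW_i \mS^k_i$, while the $i$-th diagonal block of $\mD$ is $\gamma_i \mW_i$. Thus \eqref{eq:ineq_D} is equivalent to the $\nlay$ independent block-wise inequalities
$$\gamma_i^2\, \Exp{\mS^k_i \mW_i \mL_i \mW_i \mS^k_i} \preceq \gamma_i \mW_i, \qquad i\in[\nlay].$$

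Next I would reduce each block inequality to the spectral condition stated in \eqref{eq:thm-gamma_i}. Dividing by $\gamma_i > 0$ and conjugating by $\mW_i^{-1/2}$ (which preserves the Loewner order because $\mW_i \succ 0$), the inequality becomes
$$\gamma_i\, \Exp{\mW_i^{-1/2} \mS^k_i \mW_i \mL_i \mW_i \mS^k_i \mW_i^{-1/2}} \preceq \mI_{d_i},$$
and for a positive semidefinite matrix this is equivalent to $\gamma_i \lambda_{\max}(\cdot) \leq 1$, which is precisely \eqref{eq:thm-gamma_i}. Consequently, under the hypotheses of the theorem, $\mD=\Gamma\mW$ satisfies \eqref{eq:ineq_D}.

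With \eqref{eq:ineq_D} verified, \Cref{thm:main-D}(i) immediately yields $\frac{1}{K}\sum_{k=0}^{K-1}\Exp{\norm{\nabla f(x^k)}_{\Gamma\mW}^2} \leq 2(f(x^0)-f^{\inf})/K$, and dividing both sides by $\det(\Gamma\mW)^{1/d}$ gives \eqref{eq:main-D-normalized-det}. I do not anticipate a serious obstacle: the content of the proof is entirely the block-wise reduction and the symmetric conjugation by $\mW_i^{-1/2}$, both of which are routine once one recognizes that block-diagonality lets \eqref{eq:ineq_D} decouple layer-by-layer. The only small care is ensuring that $\mW_i^{-1/2}$ can pass through the expectation (which it can, since it is deterministic) and that the order relations are preserved by conjugation by a positive definite matrix.
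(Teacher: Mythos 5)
Your proposal is correct and follows essentially the same route as the paper: decompose \eqref{eq:ineq_D} into the $\nlay$ block-wise inequalities $\gamma_i^2\,\Exp{\mS^k_i\mW_i\mL_i\mW_i\mS^k_i}\preceq\gamma_i\mW_i$, observe these are equivalent to \eqref{eq:thm-gamma_i}, and then invoke \Cref{thm:main-D} followed by determinant normalization. The only difference is that you spell out the conjugation by $\mW_i^{-1/2}$ that the paper leaves implicit, which is a welcome addition rather than a deviation.
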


In particular, if the scalars $\{\gamma_i\}$ are chosen to be equal to their maximum allowed values from  \eqref{eq:thm-gamma_i}, then the convergence factor  of \eqref{eq:main-D-normalized-det} is equal to
\begin{align*}
  {\det\left(\Gamma \mW\right)}^{-\frac{1}{d}} 
   =  \left[\prod_{i=1}^\nlay  \lambda^{d_i}_{\max}\left(\Exp{\mW_i^{-\frac12} \mS^k_i \mW_i \mL_i \mW_i \mS^k_i\mW_i^{-\frac12}}\right) \right] ^{\frac{1}{d}}  \det(\mW^{-1})^{\frac{1}{d}}.
 \end{align*} 

\Cref{Table:comm-complex-single-node} contains the (expected) communication complexities of \ref{eq:alg1}, \ref{eq:alg2} and GD for several choices of $\mW,\mD$ and $\mS^k$. Here are a few comments about the table.
We deduce that taking a matrix stepsize without compression (row 1) we improve GD (row 13).  
A careful analysis reveals that the result in row 5 is always worse than row 7 in terms of both communication and iteration complexity. However, the results in row 6 and row 7 are not comparable in general, meaning that neither of them is universally better.
More discussion on this table can be found in the Appendix.
 
\paragraph{Compression for free.} Now, let us focus on row 12, which corresponds to a sampling scheme where 
the $i$-th layer is independently selected with probability $q_i$.
Mathematically, it goes as follows: 
\begin{equation}\label{eq:def-bern-q}
  \mT^k_i = \frac{\eta_i}{q_i} \mI_{d_{i}}, \quad \text{where} \quad \eta_i \sim \text{Bernoulli}(q_i).
\end{equation}
Jensen's inequality implies that 
\begin{equation}
  \left(\sum_{i=1}^{l}q_i d_i\right)\cdot\prod_{i=1}^{l}
  \left(\frac{1}{q_i}\right)^\frac{d_i}{d} \geq d.
 \end{equation} 
 The equality is attained when $q_i = q$ for all $i \in [\nlay]$.
 The expected bits transferred per iteration of this algorithm is then equal to $k_{\rm exp} = qd$ and the communication complexity equals $d \det(\mL)^{1/d}$. 
 Comparing with the results for \ref{eq:alg2} with rand-$k_{\rm exp}$ on row 11 and using the fact that 
 $\det(\mL) \leq \det\brr{\diag(\mL)}$, we deduce that the Bernoulli scheme is better than the uniform sampling scheme. 
 Notice also, the communication complexity matches the one for the uncompressed \ref{eq:alg2} displayed on row 9. 
 This, in particular means that using the Bern-$q$ sketches  we can compress the gradients for free. 
 The latter means that we reduce the number of bits broadcasted at each iteration without losing in the total communication complexity. 
 In particular, when all the layers have the same width $d_i$,  the number of broadcasted bits for each iteration is reduced by a factor of $q$.

\section{Distributed setting}\label{sec:dist-main}
In this section we describe the distributed versions of our algorithms and present convergence guarantees for them.
Let us consider  an objective function that is sum decomposable:
\begin{equation*}
  f(x) \eqdef \frac{1}{n}\sum_{i=1}^n f_i(x),
\end{equation*}
where each $f_i: \mathbb{R}^d \rightarrow \mathbb{R}$ is a differentiable function. 
We assume that $f$ satisfies \Cref{ass:finf} and the component functions satisfy the below condition.
\begin{assumption}\label{ass:lower-bounded-fi}
  Each component function $f_i$ is $\mL_i$-smooth and is bounded from below:
  $f_i(x) \geq f_i^{\inf}$ for all $x \in \R^d$.
\end{assumption}
This assumption also implies that $f$ is of matrix smoothness with 
$\mbL \in \bbS^d_{++}$, where $\mbL = \frac{1}{n}\sum_{i=1}^{n} \mL_i$. 
Following the standard FL framework \citep{konevcny2016federated,mcmahan2017communication-efficient,khirirat2018distributed}, we assume that the $i$-th component function $f_i$ is stored on the $i$-th client. 
At each iteration, the clients in parallel compute and compress the local gradient $\nabla f_i$ and communicate it to the central server. 
The server, then aggregates the compressed gradients, computes the next iterate, and in parallel broadcasts it to the clients. 
See the algorithms below for the pseudo-codes.
\begin{center}

\begin{minipage}{0.49\textwidth}
  \begin{algorithm}[H]
  \caption{Distributed \ref{eq:alg1}}\label{alg:dist-alg1}
  \begin{algorithmic}[1]
  \STATE {\bfseries Input:} Starting point $x_0$, stepsize matrix $\mD$, number of iterations $K$
  \FOR {$k=0,1,2,\ldots,K-1$}
  \STATE \underline{The devices in parallel:}
  \STATE \hspace{0.01\algorithmicindent} sample $\mS_i^k \sim \cS$;
  \STATE \hspace{0.01\algorithmicindent} compute $\mS_i^k \nabla f_i(x_{k}) $;
  \STATE \hspace{0.01\algorithmicindent} broadcast $\mS_i^k \nabla f_i(x_{k}) $.
  \STATE \underline{The server:}
  \STATE \hspace{0.01\algorithmicindent} combines $g_k = \frac{\mD}{n} \sum_{i}^{n}\mS_i^k \nabla f_i(x_{k}) $;
  \STATE \hspace{0.01\algorithmicindent} computes $x_{k+1}=x_k- g_k$;
  \STATE \hspace{0.01\algorithmicindent} broadcasts $x_{k+1}$.
  \ENDFOR
  \STATE {\bfseries Return:} $x_K$
  \end{algorithmic}
  \end{algorithm}
  \end{minipage}
  \begin{minipage}{0.49\textwidth}
    \begin{algorithm}[H]
    \caption{Distributed \ref{eq:alg2}}\label{alg:dist-alg2}
    \begin{algorithmic}[1]
      \STATE {\bfseries Input:} Starting point $x_0$, stepsize matrix $\mD$, number of iterations $K$
      \FOR {$k=0,1,2,\ldots,K-1$}
      \STATE \underline{The devices in parallel:}
      \STATE \hspace{0.01\algorithmicindent} sample $\mT_i^k \sim \cT$;
      \STATE \hspace{0.01\algorithmicindent} compute $\mT_i^k \mD \nabla f_i(x_{k}) $;
      \STATE \hspace{0.01\algorithmicindent} broadcast $\mT_i^k \mD \nabla f_i(x_{k}) $.
      \STATE \underline{The server:}
      \STATE \hspace{0.01\algorithmicindent} combines $g_k = \frac{1}{n} \sum_{i}^{n}\mT_i^k \mD \nabla f_i(x_{k})$;
      \STATE \hspace{0.01\algorithmicindent} computes $x_{k+1}=x_k - g_k$;
      \STATE \hspace{0.01\algorithmicindent} broadcasts $x_{k+1}$.
      \ENDFOR
      \STATE {\bfseries Return:} $x_K$
    \end{algorithmic}
    \end{algorithm}
  \end{minipage}
\end{center}

\begin{theorem}
        \label{thm:dist-alg1}
        Let $f_i:\R^d\rightarrow\R$ satisfy \Cref{ass:lower-bounded-fi} and let $f$ satisfy \Cref{ass:finf} and \Cref{ass:matrix_L} with smoothness matrix $\mL$. 
        If the stepsize satisfies
          \begin{equation}
            \label{eq:dis-cond1-key}
            \mD\mL\mD \preceq \mD,
        \end{equation}
        then the following convergence bound  is true for the iteration of \Cref{alg:dist-alg1}:
        \begin{equation}\label{eq:thm-dist1}
            \min_{0\leq k\leq K-1}\Exp{\norm{\nabla f(x^k)}_{\frac{\mD}{\det(\mD)^{1/d}}}^2} \leq \frac{2(1 + \frac{\lambda_{\mD}}{n})^K\left(f(x^0) - f^{\inf}\right)}{\det(\mD)^{1/d}\, K} + \frac{2\lambda_{\mD} \Delta^{\inf}}{\det(\mD)^{1/d} \, n},
        \end{equation}
        where $\Delta^{\inf} := f^{\inf} - \frac{1}{n}\sum_{i=1}^{n}f_i^{\inf}$ and
        \begin{align*}
            \lambda_{\mD} &:= \max_i \left\{\lambda_{\max}\left(\Exp{\mL_i^\frac{1}{2}\left(\mS_i^k - \mI_d\right)\mD\mL\mD\left(\mS_i^k - \mI_d\right)\mL_i^\frac{1}{2}}\right)\right\}.
        \end{align*}
    \end{theorem}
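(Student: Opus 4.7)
The plan is to establish a one-step descent inequality and then iterate. Let $g^k \eqdef \tfrac{1}{n}\sum_{i=1}^n \mS_i^k \nabla f_i(x^k)$, so that the update reads $x^{k+1}=x^k-\mD g^k$. Apply \Cref{ass:matrix_L} (matrix smoothness of $f$) at $(x^{k+1},x^k)$ to obtain
\begin{equation*}
  f(x^{k+1}) \leq f(x^k) - \inner{\nabla f(x^k)}{\mD g^k} + \tfrac12 \inner{\mL\mD g^k}{\mD g^k}.
\end{equation*}
Then take expectation conditional on $x^k$. Independence and unbiasedness ($\Exp{\mS_i^k}=\mI_d$) give $\Exp{g^k}=\nabla f(x^k)$, so the linear term equals $-\norm{\nabla f(x^k)}_{\mD}^2$. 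For the quadratic term, decompose $g^k = \nabla f(x^k) + e^k$ with $e^k \eqdef \tfrac{1}{n}\sum_i (\mS_i^k-\mI_d)\nabla f_i(x^k)$; the cross term in $\Exp{\inner{\mD\mL\mD g^k}{g^k}}$ vanishes, leaving $\inner{\mD\mL\mD\nabla f(x^k)}{\nabla f(x^k)} + \Exp{\inner{\mD\mL\mD e^k}{e^k}}$.

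The next step is to control these two pieces. The stepsize condition \eqref{eq:dis-cond1-key} reduces the deterministic quadratic to $\inner{\mD\mL\mD \nabla f(x^k)}{\nabla f(x^k)} \leq \norm{\nabla f(x^k)}_{\mD}^2$, yielding a $\tfrac12\norm{\nabla f(x^k)}_\mD^2$ descent once combined with the linear term. The cross-worker independence of $\mS_i^k$ forces $\Exp{\inner{\mD\mL\mD e^k}{e^k}} = \tfrac{1}{n^2}\sum_i \Exp{\nabla f_i(x^k)^\top (\mS_i^k-\mI_d)\mD\mL\mD(\mS_i^k-\mI_d)\nabla f_i(x^k)}$. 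I would then insert $\mI_d=\mL_i^{1/2}\mL_i^{-1/2}$ on each side of $\nabla f_i(x^k)$ and pull the resulting quadratic form outside to obtain the spectral bound
\begin{equation*}
  \Exp{\inner{\mD\mL\mD e^k}{e^k}} \leq \tfrac{\lambda_{\mD}}{n^2}\sum_i \norm{\nabla f_i(x^k)}_{\mL_i^{-1}}^2.
\end{equation*}
Finally, using that each $f_i$ is $\mL_i$-smooth and bounded below, the standard consequence $\tfrac12\norm{\nabla f_i(x)}_{\mL_i^{-1}}^2\leq f_i(x)-f_i^{\inf}$ converts the variance into a function-value quantity and, upon inserting $f^{\inf}$, yields $\tfrac{2\lambda_\mD}{n}\brr{f(x^k)-f^{\inf}+\Delta^{\inf}}$.

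Putting everything together and writing $V^k\eqdef \Exp{f(x^k)-f^{\inf}}$ produces the one-step recursion
\begin{equation*}
  V^{k+1} \leq \brr{1+\tfrac{\lambda_{\mD}}{n}} V^k - \tfrac12 \Exp{\norm{\nabla f(x^k)}_{\mD}^2} + \tfrac{\lambda_{\mD}}{n}\Delta^{\inf}.
\end{equation*}
From here I iterate. Dropping the (nonnegative) gradient term gives $V^k \leq \alpha^k V^0 + \Delta^{\inf}(\alpha^k-1)$ with $\alpha\eqdef 1+\lambda_\mD/n$; substituting this bound back, summing over $k=0,\dots,K-1$ and rearranging using $\alpha-1=\lambda_\mD/n$ collapses the telescope into $\tfrac12 \sum_{k=0}^{K-1}\Exp{\norm{\nabla f(x^k)}_\mD^2} \leq V^0 \alpha^K + K(\alpha-1)\Delta^{\inf}$. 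Dividing by $K$, lower-bounding the average by the minimum, and finally dividing through by $\det(\mD)^{1/d}$ to match the normalized norm on the left-hand side delivers \eqref{eq:thm-dist1}.

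\textbf{Main obstacle.} The delicate step is the variance bound: passing from $\Exp{\nabla f_i(x^k)^\top (\mS_i^k-\mI_d)\mD\mL\mD(\mS_i^k-\mI_d)\nabla f_i(x^k)}$ to a $\lambda_\mD$-weighted term requires inserting $\mL_i^{\pm 1/2}$ symmetrically so that the $\mL_i$-smoothness lemma $\norm{\nabla f_i}_{\mL_i^{-1}}^2\leq 2(f_i-f_i^{\inf})$ becomes applicable; this is what forces the definition of $\lambda_\mD$ to involve $\mL_i^{1/2}(\mS_i^k-\mI_d)\mD\mL\mD(\mS_i^k-\mI_d)\mL_i^{1/2}$ rather than a raw spectral norm. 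The second delicate step is iterating the affine recursion so that the stochastic $\Delta^{\inf}$ contribution separates cleanly from the exponential $\alpha^K$ factor attached to the initial suboptimality; this is handled by inserting the loose bound $V^k\leq \alpha^k V^0$ inside the summation before telescoping.
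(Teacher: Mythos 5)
Your derivation of the one-step descent inequality is correct and matches the paper's: matrix smoothness, unbiasedness of the aggregated estimator, the bias--variance split of the quadratic term, cross-worker independence to reduce the variance to a per-client sum, the insertion of $\mL_i^{\pm 1/2}$ to produce the spectral constant $\lambda_{\mD}$, and the lemma $\norm{\nabla f_i(x)}_{\mL_i^{-1}}^2 \leq 2(f_i(x)-f_i^{\inf})$ all appear in the same form in the paper, and you arrive at the same recursion $V^{k+1}\leq \alpha V^k - \tfrac12 \Exp{\norm{\nabla f(x^k)}_\mD^2} + (\alpha-1)\Delta^{\inf}$ with $\alpha = 1+\lambda_\mD/n$.

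The gap is in how you iterate this recursion. Substituting $V^k \leq \alpha^k V^0 + \Delta^{\inf}(\alpha^k-1)$ into the summed recursion does \emph{not} collapse to $\tfrac12\sum_{k=0}^{K-1} r^k \leq \alpha^K V^0 + K(\alpha-1)\Delta^{\inf}$: carrying out the algebra gives
\begin{equation*}
\tfrac12\sum_{k=0}^{K-1} r^k \;\leq\; V^0 - V^K + (\alpha-1)\sum_{k=0}^{K-1}V^k + K(\alpha-1)\Delta^{\inf} \;\leq\; \alpha^K V^0 + \left(\alpha^K-1\right)\Delta^{\inf},
\end{equation*}
and since $\alpha^K - 1 \geq K(\alpha-1)$, your claimed bound is strictly stronger than what this manipulation yields. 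It is in fact not derivable from the recursion at all: taking $r^0=\dots=r^{K-2}=0$ so that $V^{K-1}$ saturates its upper bound, one can make $\tfrac12 r^{K-1}$ as large as $\alpha^K V^0 + (\alpha^K-1)\Delta^{\inf}$ while the recursion holds with equality. Consequently the unweighted-average route only delivers a neighborhood term of order $\tfrac{2(\alpha^K-1)\Delta^{\inf}}{K}$, which can be exponentially larger than the theorem's $\tfrac{2\lambda_\mD\Delta^{\inf}}{n}$ (e.g.\ when $\lambda_\mD/n \sim 1/\sqrt{K}$). The paper avoids this by the weighted-averaging device of Stich and Khaled--Richt\'arik: multiply the recursion by $w_k$ with $w_k = w_{k-1}/\alpha$, so that $w_{k-1}\delta^k - w_k\delta^{k+1}$ telescopes, and bound $\min_k r^k \leq \sum_k w_k r^k / W_K$; the constant term then contributes exactly $(\alpha-1)\Delta^{\inf}$ while only the initial suboptimality picks up the factor $w_{-1}/W_K \leq \alpha^K/K$. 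You need this weighting (or an equivalent direct argument about the minimum rather than the average) to obtain \eqref{eq:thm-dist1} as stated.
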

    The same result is true for \Cref{alg:dist-alg2} with a different constant $\lambda_{\mD}$.
The proof of \Cref{thm:dist-alg1} and its analogue for \Cref{alg:dist-alg2} are presented in the Appendix. 
The analysis is largely inspired by \cite[Theorem 1]{khaled2020better}. 
Now, let us examine the right-hand side of \eqref{eq:thm-dist1}. 
We start by observing that the first term has exponential dependence in $K$. 
However, the term inside the brackets, $1+\lambda_{\mD}/n$, depends on the stepsize $\mD$. 
Furthermore, it has a second-order dependence on $\mD$, implying that $\lambda_{\alpha \mD} = \alpha^2 \lambda_{\mD}$, as opposed to $\det(\alpha\mD)^{1/d}$, which is linear in $\alpha$. 
Therefore, we can choose a small enough coefficient $\alpha$ to ensure that $\lambda_{\mD}$ is of order $n/K$. 
This means that for a fixed number of iterations $K$, we choose the matrix stepsize to be "small enough" to guarantee that the denominator of the first term is bounded. 
The following corollary summarizes these arguments, and its proof can be found in the Appendix.
  \begin{corollary}
        \label{cor:dist-cond-conv}
        We reach an error level of $\varepsilon^2$ in \eqref{eq:thm-dist1} if the following conditions are satisfied:
        \begin{equation}
            \label{eq:cond-dist-1}
            \mD\mL\mD \preceq \mD, \quad 
            \lambda_{\mD} \leq \min\brc{\frac{n}{K}, \frac{n\varepsilon^2}{4\Delta^{\inf}}\det(\mD)^{1/d}}, \quad
            K \geq \frac{12(f(x^0) - f^{\inf})}{\det(\mD)^{1/d}\, \varepsilon^2}.
        \end{equation}
    \end{corollary}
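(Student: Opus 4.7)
The plan is to start from the bound \eqref{eq:thm-dist1} supplied by Theorem~\ref{thm:dist-alg1}. The first condition $\mD\mL\mD\preceq \mD$ in the statement of the corollary is precisely the stepsize requirement \eqref{eq:dis-cond1-key}, so Theorem~\ref{thm:dist-alg1} applies and yields the two-term upper bound that I will split as $\tfrac{1}{2}\varepsilon^2+\tfrac{1}{2}\varepsilon^2$.

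For the second term, I would simply substitute the assumed bound $\lambda_{\mD}\leq n\varepsilon^2\det(\mD)^{1/d}/(4\Delta^{\inf})$ into $\tfrac{2\lambda_{\mD}\Delta^{\inf}}{\det(\mD)^{1/d}\,n}$; all the problem-dependent quantities cancel and the term is immediately seen to be at most $\varepsilon^2/2$. This is a direct calculation and should cause no trouble.

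For the first term I would use the other bound $\lambda_{\mD}\leq n/K$, which gives $(1+\lambda_{\mD}/n)^K \leq (1+1/K)^K \leq e$ via the standard monotone limit inequality; here I would note that the argument is meaningful only when $K\geq 1$, which is implicit from condition 3. Once the exponential factor is replaced by the constant $e$, the first term reduces to $\tfrac{2e(f(x^0)-f^{\inf})}{\det(\mD)^{1/d}\,K}$, and inserting the lower bound $K\geq 12(f(x^0)-f^{\inf})/(\det(\mD)^{1/d}\varepsilon^2)$ yields the upper bound $e\varepsilon^2/6$; since $e/6<1/2$ this is $\leq \varepsilon^2/2$, completing the splitting. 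Adding the two halves gives the claimed error level $\varepsilon^2$.

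The main (and only) subtle point is the interplay between the second-order dependence of $\lambda_{\mD}$ in $\mD$ (via the quadratic occurrence $\mS_i^k \mD\mL\mD \mS_i^k$ in its definition) and the $\det(\mD)^{1/d}$ scaling in the denominator: this is what allows the simultaneous satisfaction of the two upper bounds on $\lambda_{\mD}$ by shrinking $\mD$ by a scalar factor, as already motivated in the paragraph preceding the corollary. The proof itself does not use this rescaling explicitly — it merely assumes the bounds hold — but in writing the plan I would flag that all three conditions of \eqref{eq:cond-dist-1} are compatible and can be achieved in practice by appropriately downscaling a feasible $\mD$. No further technical obstacles arise.
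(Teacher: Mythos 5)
Your proposal is correct and follows essentially the same route as the paper: split the right-hand side of \eqref{eq:thm-dist1} into two halves, bound the exponential factor by a constant using $\lambda_{\mD}\leq n/K$ (the paper writes $2(1+\lambda_{\mD}/n)^K\leq 2\exp(\lambda_{\mD}K/n)\leq 2e\leq 6$, which is the same estimate as your $(1+1/K)^K\leq e$), and then substitute the bounds on $K$ and on $\lambda_{\mD}$ to get $\varepsilon^2/2$ for each term. No discrepancies.
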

    \Cref{prop-dist-convex} in the Appendix proves that these conditions with respect to $\mD$ are convex.  
    In order to minimize the iteration complexity for getting $\varepsilon^2$ error, one needs to solve the following optimization problem
    \begin{eqnarray}
      \text{minimize} && \log \det (\mD^{-1}) \notag \\
      \text{subject to} && \mD \quad \text{satisfies} \quad \eqref{eq:cond-dist-1}.\notag
    \end{eqnarray}
  
    Choosing the optimal stepsize for \Cref{alg:dist-alg1} is analogous to solving \eqref{prob:log-det}. One can formulate the distributed counterpart of \Cref{thm:block_diag} and attempt to solve it for different sketches. Furthermore, this leads to a convex matrix minimization problem involving $\mD$. We provide a formal proof of this property in the Appendix.
    Similar to the single-node case, computational methods can be employed using the {\tt CVXPY} package. However, some additional effort is required to transform \eqref{eq:cond-dist-1} into the disciplined convex programming (DCP) format.

    The second term in \eqref{eq:thm-dist1} corresponds to the convergence neighborhood of the algorithm. It does not depend on the number of iteration, thus it remains unchanged, after we choose the stepsize. 
    Nevertheless, it depends on the number of clients $n$. 
    In general, the term $\Delta^{\inf}/n$ can be unbounded, when $n \rightarrow +\infty$. However, per \Cref{cor:dist-cond-conv}, we require $\lambda_{\mD}$ to be upper-bounded by $n/K$. 
    Thus, the neighborhood term will indeed converge to zero when 
    $K \rightarrow +\infty$, if we choose the stepsize accordingly.

    We compare our results with the existing results for DCGD.
    In particular we use the technique from \cite{khaled2020better} for the scalar smooth DCGD with scalar stepsizes.
    This means that the parameters of algorithms are $\mL_i = L_i \mI_d,  \mL = L \mI_d,  \mD = \gamma \mI_d,  \omega = \lambda_{\max}\brr{\Exp{\left(\mS_i^k\right)^{\top}\mS_i^k}} - 1$.
    One may check that \eqref{eq:cond-dist-1} reduces to 
    \begin{equation}
            \gamma \leq \min\brc{\frac{1}{{L}}, \sqrt{\frac{n}{KL_{\max}{L}\omega}},
            \frac{n\varepsilon^2}{4\Delta^{\inf} L_{\max}{L}\omega} } 
            \quad \text{and} \quad 
            K\gamma \geq \frac{12(f(x^0) - f^{\inf})}{\varepsilon^2}
    \end{equation}
    As expected, this coincides with the results from \cite[Corollary 1]{khaled2020better}. 
    See the Appendix for the details on the analysis of \cite{khaled2020better}.
    Finally, we back up our theoretical findings with experiments. See \Cref{fig:dist-comparison} for a simple experiment confirming that Algorithms \ref{alg:dist-alg1} and \ref{alg:dist-alg2} have better iteration and communication complexity compared to scalar stepsized DCGD. 
    For more details on the experiments we refer the reader to the corresponding section in the Appendix.

    \begin{figure}[t]
    \includegraphics[scale = 0.27]{./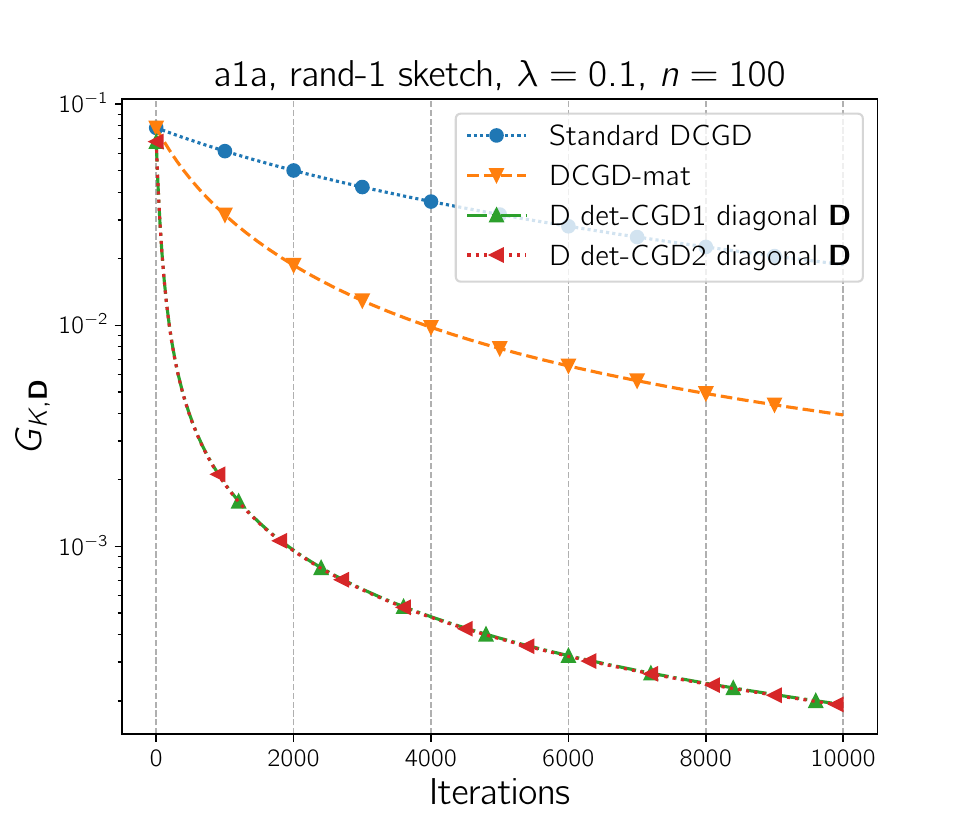}
    \includegraphics[scale = 0.27]{./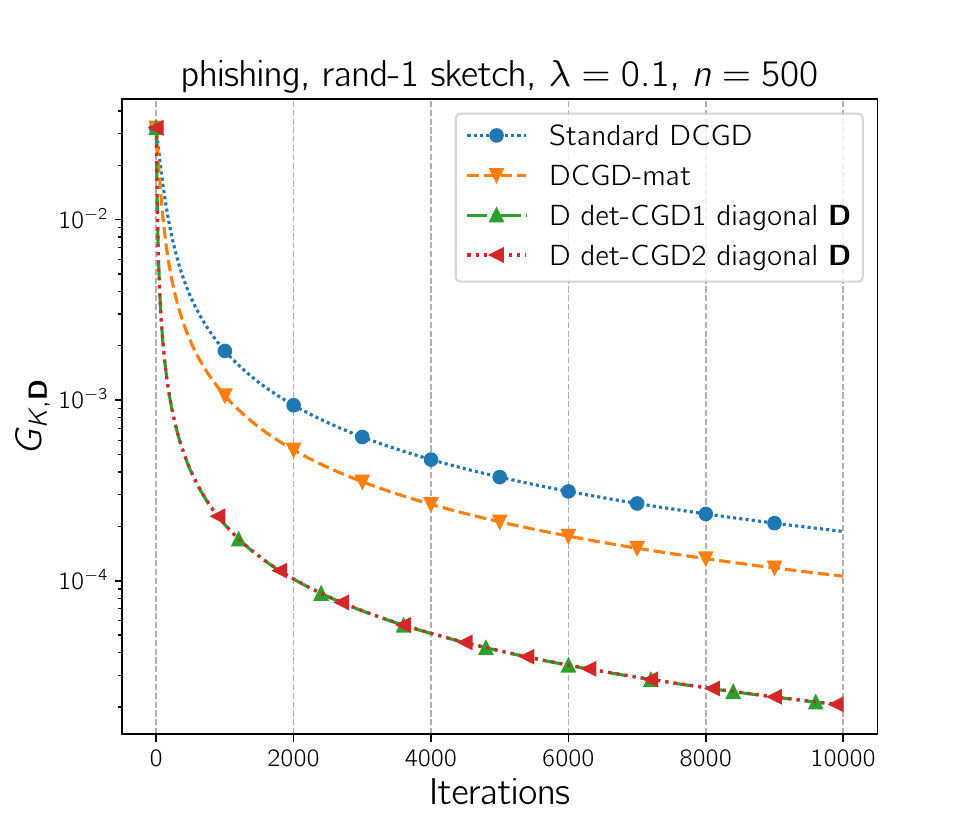}
     \includegraphics[scale = 0.27]{./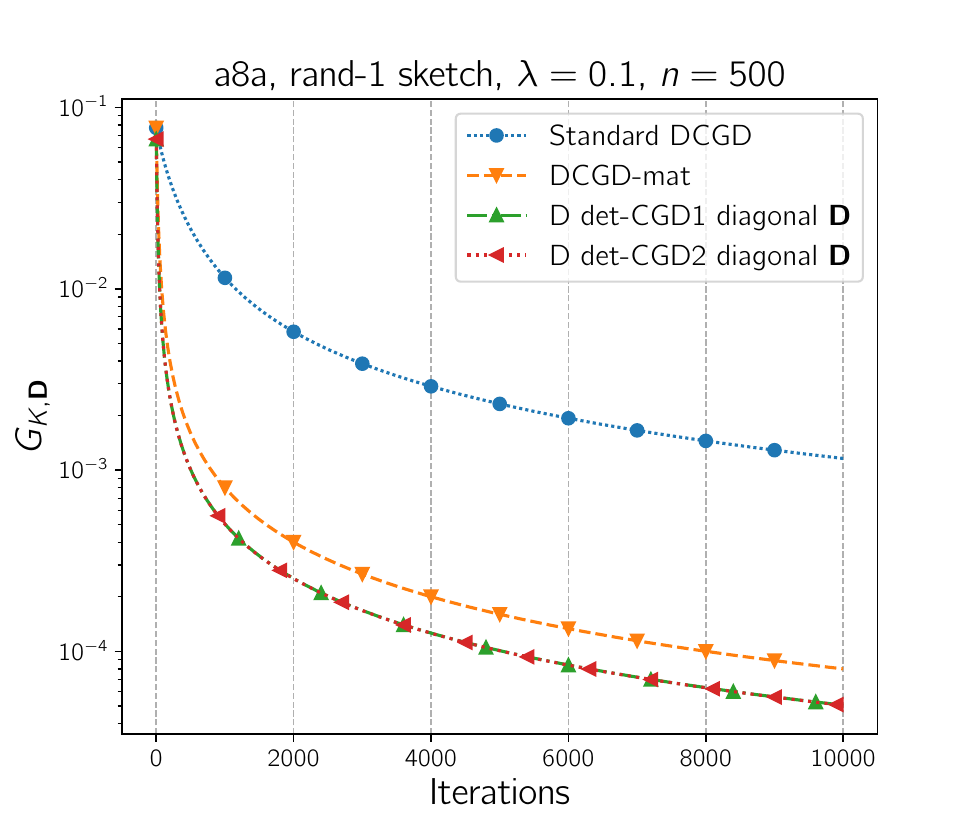}
    \caption{Comparison of standard DCGD, DCGD with matrix smoothness, D-\ref{eq:alg1} and D-\ref{eq:alg2} with optimal diagonal stepsizes under rand-$1$ sketch. The stepsize for standard DCGD is determined  using \cite[Proposition 4]{khaled2020better}, the stepsize for DCGD with matrix smoothness along with $\mD_1$, $\mD_2$ is determined using \Cref{cor:dist-cond-conv}, the error level is set to be $\varepsilon^2 = 0.0001$. Here $G_{K, \mD} := \frac{1}{K}\big(\sum_{k=0}^{K-1}\norm{\nabla f(x^k)}^2_{{\mD}/{\det(\mD)^{1/d}}}\big)$.}
    \label{fig:dist-comparison}
    \vspace{-.51cm}
    \end{figure}

\section{Conclusion}\label{sec:conclusion}

\subsection{Limitations}

It is worth noting that every point in $\R^d$ can be enclosed within some volume 1 ellipsoid. 
To see this, let $0\neq v\in \R^d$ and define $\mQ \eqdef \frac{\alpha}{\norm{v}^2} vv^\top + \beta \sum_{i=1}^{d} v_i v_i^\top$, where $v_1=\frac{v}{\norm{v}}, v_2,\dots,v_d$ form an orthonormal basis.
The eigenvalues of $\mQ$ are $\beta$ (with multiplicity $d-1$) and $\alpha$ (with multiplicity $1$), so we have $\det (\mQ) = \beta^{d-1} \alpha \leq 1$. 
Furthermore, we have $\norm{v}_{\mQ}^2 = v^\top \mQ v = \alpha \norm{v}^2$. 
By choosing $\alpha = \frac{1}{\norm{v}^2}$ and $\beta = \norm{v}^{2/(d-1)}$, we can obtain $\det (\mQ)=1$ while $\norm{v}_{\mQ}^2\leq 1 $. 
Therefore, having the average $\mD$-norm of the gradient bounded by a small number does not guarantee that the average Euclidean norm is small. 
This implies that the theory does not guarantee stationarity in the Euclidean sense.

\subsection{Future work}

Matrix stepsize gradient methods are still not well studied and require further analysis. 
Although many important algorithms have been proposed using scalar stepsizes and are known to have good performance, their matrix analogs have yet to be thoroughly examined. 
The distributed algorithms proposed in \Cref{sec:dist-main} follow the structure of DCGD by \cite{khirirat2018distributed}. However, other federated learning mechanisms such as MARINA, which has variance reduction \citep{gorbunov2021marina}, or EF21 by \cite{richtarik2021ef21}, which has powerful practical performance, should also be explored.

\bibliographystyle{apalike}
\bibliography{bibliography.bib}

\newpage

\addtocontents{toc}{\protect\setcounter{tocdepth}{3}}

\appendix

\tableofcontents

\part*{Appendix}
\section{Single node case}

\subsection{Proof of \Cref{thm:main-D}}

  \paragraph{i)}
  Using \Cref{ass:matrix_L} with $x=x^{k+1} = x^k - \mD \mS^k \nabla f(x^k)$ and $y =  x^k$, we get
  \begin{equation}
    \begin{aligned}
    \Exp{f(x^{k+1}) \mid x^k} 
    & \leq  \mathbb{E} \Big[ f(x^k) + \inner{\nabla f(x^k)}{- \mD \mS^k \nabla f(x^k)} \\ & \quad\quad+ \frac{1}{2} \inner{\mL (- \mD \mS^k \nabla f(x^k))}{- \mD \mS^k \nabla f(x^k)} \mid x^k\Big]\notag  \\
    & = f(x^k) - \inner{\nabla f(x^k)}{ \mD \Exp{\mS^k} \nabla f(x^k) } + \frac{1}{2}  \inner{\Exp{\mS^k \mD  \mL \mD \mS^k} \nabla f(x^k)}{  \nabla f(x^k)}. \notag 
  \end{aligned}
  \end{equation}
  From the unbiasedness of the sketch $\mS^k$
  \begin{eqnarray} \label{eq:same}
    \Exp{f(x^{k+1}) \mid x^k} 
    &\leq& f(x^k) - \inner{\nabla f(x^k)}{ \mD  \nabla f(x^k) } + \frac{1}{2}  \inner{\Exp{\mS^k \mD  \mL \mD \mS^k} \nabla f(x^k)}{  \nabla f(x^k)}\notag   \\
    &\overset{\eqref{eq:ineq_D}}{\leq}& f(x^k) - \inner{\nabla f(x^k)}{ \mD  \nabla f(x^k) } + \frac{1}{2}\inner{\mD \nabla f(x^k)}{  \nabla f(x^k)} \notag \\
    &=& f(x^k) - \frac{1}{2}\inner{\nabla f(x^k)}{ \mD  \nabla f(x^k) } \notag \\
    &=& f(x^k) - \frac{1}{2}\norm{ \nabla f(x^k) }_ \mD ^2.
    \label{eq:08yfdy8fd_)yfd}
  \end{eqnarray}
  Next, by subtracting $f^{\inf}$ from both sides of \eqref{eq:08yfdy8fd_)yfd}, taking expectation and applying the tower property, we get 
  \begin{eqnarray*}
    \Exp{f(x^{k+1}) } - f^{\inf} &=& \Exp{\Exp{f(x^{k+1})   \mid x^k}  } - f^{\inf} \\
    &\overset{\eqref{eq:08yfdy8fd_)yfd}}{\leq} & \Exp{f(x^{k})  - \frac{1}{2} \norm{\nabla f(x^k)}_{\mD}^2} - f^{\inf} \\
    &=&\Exp{f(x^k) }- f^{\inf}  -  \frac{1}{2} \Exp{\norm{ \nabla f(x^k) }_ \mD ^2 }. 
  \end{eqnarray*}
 Letting $\Delta^k \eqdef \Exp{f(x^k) }- f^{\inf}$, the last inequality can be written as $\Delta^{k+1} \leq \Delta^k   -  \frac{1}{2} \Exp{\norm{ \nabla f(x^k) }_ \mD ^2}.$
   Summing these inequalities for $k=0,1,\dots,K-1$, we get a telescoping effect leading to \[ \Delta^{K} \leq \Delta^0 - \frac{1}{2}\sum_{k=0}^{K-1} \Exp{\norm{\nabla f(x^k)}_\mD^2}.\] 
  It remains to rearrange the terms of this inequality, divide both sides by 
  $~ K\det(\mD)^{1/d}$ , and use the inequality $\Delta^K \geq 0$.

  \paragraph{ii)}
    Similar to the previous case, using matrix smoothness for $x=x^{k+1} = x^k - \mT^k\mD\nabla f(x^k)$ and $y = x^k $, we get
    \begin{eqnarray}
      \Exp{f(x^{k+1}) \mid x^k}
      &\leq& \mathbb{E} \Big[f(x^k) + \inner{\nabla f(x^k)}{- \mT^k\mD \nabla f(x^k)}
      \\ &&+ \frac{1}{2} \inner{\mL (- \mT^k\mD \nabla f(x^k))}{- \mT^k\mD \nabla f(x^k)} \mid x^k\Big] \notag  \\
      &=&f(x^k) - \inner{\nabla f(x^k)}{ \Exp{\mT^k} \mD  \nabla f(x^k) } 
      \\  &&+ \frac{1}{2}  \inner{\Exp{ \mD (\mT^k)^{\top} \mL \mT^k \mD} \nabla f(x^k)}{ \nabla f(x^k)}.\notag 
    \end{eqnarray}
    From \Cref{ass:sketch} and condition \eqref{eq:ineq_D-var} we deduce
    \begin{eqnarray}
        \Exp{f(x^{k+1}) \mid x^k} &\leq& f(x^k) - \inner{\nabla f(x^k)}{ \mD  \nabla f(x^k) } + \frac{1}{2}\inner{\mD \nabla f(x^k)}{  \nabla f(x^k)} \notag \\
        &=& f(x^k) - \frac{1}{2}\inner{\nabla f(x^k)}{ \mD  \nabla f(x^k) } \notag \\
        &=& f(x^k) - \frac{1}{2}\norm{ \nabla f(x^k) }_ \mD ^2. \label{eq:08yfdy8fd_)yfd_var}
    \end{eqnarray}
    Thus, we obtain the same upper bound on $\Exp{f(x^{k+1}) \mid x^k}$ as in \eqref{eq:same}.
   Following the steps from the first part, we conclude the proof.

\subsection{Proof of  \Cref{thm:opt-D}}

    Let us rewrite \eqref{eq:ineq_D} using quadratic forms. 
    That is for every non-zero $v \in \sR^d$, the following inequality must be true:
    \begin{equation*}
        v^{\top}\Exp{\mS^k\mD\mL\mD\mS^k}v \leq v^{\top}\mD v, \qquad \forall v \neq 0 
    \end{equation*}
    Notice that both sides of this inequality are real numbers, thus can be written equivalently as 
    \begin{equation*}
        \tr(v^{\top}\Exp{\mS^k\mD\mL\mD\mS^k}v) \leq \tr(v^{\top}\mD v), \qquad \forall v \neq 0
    \end{equation*}
    The LHS can be modified in the following way
    \begin{eqnarray*}
        \tr(v^{\top}\Exp{\mS^k\mD\mL\mD\mS^k}v) &\overset{\text{\Romannumeral1}}{=}& \tr\left(\Exp{v^{\top}\mS^k\mD\mL\mD\mS^kv}\right) \\
        &\overset{\text{\Romannumeral2}}{=}& \Exp{\tr(v^{\top}\mS^k\mD\mL\mD\mS^kv)} \\
        &\overset{\text{\Romannumeral3}}{=}& \Exp{\tr(\mL^\frac{1}{2}\mD\mS^k vv^{\top}\mS^k\mD\mL^\frac{1}{2})} \\
        &\overset{\text{\Romannumeral4}}{=}& \tr\left(\Exp{\mL^\frac{1}{2}\mD\mS^k vv^{\top}\mS^k\mD\mL^\frac{1}{2}}\right) \\
        &\overset{\text{\Romannumeral5}}{=}& \tr\left(\mL^\frac{1}{2}\mD\Exp{\mS^kvv^{\top}\mS^k}\mD\mL^\frac{1}{2}\right),
    \end{eqnarray*}
    where \Romannumeral1, \Romannumeral5 are due to the linearity of expectation, \Romannumeral2, \Romannumeral4 are due to the linearity of trace operator, \Romannumeral3 is obtained using the cyclic property of trace. Therefore, we can write the condition \eqref{eq:ineq_D} equivalently as 
    \begin{equation*}
        \tr\left(\mL^\frac{1}{2}\mD\Exp{\mS^kvv^{\top}\mS^k}\mD\mL^\frac{1}{2}\right) \leq \tr(vv^{\top}\mD ), \qquad \forall v \neq 0.
    \end{equation*}
    We then define function $g_v: \bbS^d_{++} \rightarrow \R$ for some fixed $v\neq 0$ as
    \begin{equation}
        g_v(\mD) := \tr\left(\mL^\frac{1}{2}\mD\Exp{\mS^kvv^{\top}\mS^k}\mD\mL^\frac{1}{2}\right) - \tr(vv^{\top}\mD).
        \label{eq:cvx-f-def}
    \end{equation}
    We want to show that for every fixed $v \neq 0$, $g$ is a convex function w.r.t $\mD$, so that in this case, the sub-level set $\{\mD \in \bbS^d_{++}\mid g_v(\mD) \leq 0\}$ is convex. 
    \begin{itemize}
        \item Notice that $vv^{\top}$ is a rank-$1$ matrix whose eigenvalues are all zero except one of them is $\|v\|^2 > 0$. We also have $(vv^{\top})^{\top} = (v^{\top})^{\top}v^{\top} = vv^{\top}$, so it is also a symmetric matrix. Thus we conclude that $vv^{\top} \in \bbS^d_{+}$ for every choice of $v$, we use $\mV = vv^{\top}$ to denote it.
        
        \item If $\mS^k = \mO_d$, then the first term is equal to $\mO_d$ and the function $g_v(\mD)$ is linear, thus, also convex. 
        Now, let us assume $\mS^k$ is nonzero. 
        Similarly $\mS^kvv^{\top}\mS^k = \mS^kv(\mS^kv)^T$ is also a symmetric positive semi-definite matrix whose eigenvalues are all $0$ except one of them is $\|\mS^kv\|^2$, this tells us that its expectation over $\mS^k$ is still a symmetric positive semi-definite matrix, we use $\mR = \Exp{\mS^kvv^{\top}\mS^k}$ to denote it.
    \end{itemize}
    Now we can write function $g_v$ as 
    \begin{equation*}
        g_v(\mD) = \tr(\mL^\frac{1}{2}\mD\mR\mD\mL^\frac{1}{2}) - \tr(\mV\mD).
    \end{equation*}
    We present the following lemma that guarantees the convexity of the first term.
  \begin{lemma}
    \label{lemma:convexity-func}
    For every matrix $\mR \in \bbS^d_{+}$, we define 
    \begin{equation}
        \label{eq:def-func-fv}
        f(\mD) = \tr(\mL^\frac{1}{2}\mD\mR\mD\mL^\frac{1}{2}),
    \end{equation}
    where $\mL, \mD \in \bbS^d_{++}$. Then function $f: \bbS^d_{++} \rightarrow \R$ is a convex function.
  \end{lemma}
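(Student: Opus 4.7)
\textbf{Proof plan for \Cref{lemma:convexity-func}.} The plan is to rewrite $f(\mD)$ as the squared Frobenius norm of a linear (in fact, linear in $\mD$) matrix-valued expression, and then invoke the fact that the squared Frobenius norm is convex and that convexity is preserved under composition with an affine map. This avoids computing a matrix Hessian directly.

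First, I would exploit that $\mR \in \bbS_+^d$ admits a symmetric positive semi-definite square root $\mR^{1/2}$ with $\mR^{1/2}\mR^{1/2}=\mR$ and $(\mR^{1/2})^\top=\mR^{1/2}$. By the cyclic property of the trace,
\begin{equation*}
f(\mD) \;=\; \tr\!\left(\mL^{1/2}\mD\mR\mD\mL^{1/2}\right) \;=\; \tr\!\left(\mR^{1/2}\mD\mL\mD\mR^{1/2}\right) \;=\; \tr\!\left(\mL^{1/2}\mD\mR^{1/2}\mR^{1/2}\mD\mL^{1/2}\right).
\end{equation*}
Setting $\mA(\mD) \eqdef \mL^{1/2}\mD\mR^{1/2}$, which is symmetric square-root $\mL^{1/2}$ (well-defined since $\mL\in\bbS_{++}^d$) on the left and $\mR^{1/2}$ on the right, and using $\tr(\mX^\top\mX)=\|\mX\|_F^2$, I would conclude that
\begin{equation*}
f(\mD) \;=\; \tr\!\left(\mA(\mD)^\top \mA(\mD)\right) \;=\; \bigl\| \mL^{1/2}\mD\mR^{1/2}\bigr\|_F^2.
\end{equation*}

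Next, I would observe that the map $\mD \mapsto \mA(\mD) = \mL^{1/2}\mD\mR^{1/2}$ is linear in $\mD$, hence affine, and the function $\mX \mapsto \|\mX\|_F^2$ is convex on the space of real matrices (it is a squared Euclidean norm under the Frobenius inner product). Since the composition of a convex function with an affine map is convex, $f$ is convex on $\bbS_{++}^d$ (in fact, on all of $\bbS^d$).

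The statement of the lemma then combines with the linear term $-\tr(\mV\mD)$ in $g_v(\mD)$, which is affine and therefore convex, yielding convexity of $g_v$ and, as a consequence, convexity of the sub-level set $\{\mD : g_v(\mD)\le 0\}$ for every fixed $v\ne 0$. The intersection of these convex sets over $v\ne 0$ gives the feasible set of \eqref{prob:log-det}, so its convexity follows. The only potentially subtle step is justifying the trace manipulations and the existence/symmetry of $\mR^{1/2}$ when $\mR$ is merely positive semi-definite, but this is standard spectral theory for symmetric matrices, so I do not anticipate a real obstacle there.
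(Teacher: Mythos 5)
Your proposal is correct, and it takes a genuinely different (and arguably slicker) route than the paper. You factor the objective as $f(\mD)=\tr\brr{\mA(\mD)^\top\mA(\mD)}=\norm{\mL^{1/2}\mD\mR^{1/2}}_F^2$ with $\mA(\mD)=\mL^{1/2}\mD\mR^{1/2}$ linear in $\mD$, and conclude by composing the convex squared Frobenius norm with an affine map; the trace manipulations and the existence of a symmetric PSD square root $\mR^{1/2}$ are indeed standard and pose no obstacle. The paper instead verifies the convexity inequality directly: it expands $f(\alpha\mD_1+(1-\alpha)\mD_2)$, cancels the linear terms against $\alpha f(\mD_1)+(1-\alpha)f(\mD_2)$, and reduces the deficit to $\alpha(1-\alpha)\tr\brr{\mL^{1/2}(\mD_1-\mD_2)\mR(\mD_1-\mD_2)\mL^{1/2}}\geq 0$, which is the trace of a PSD matrix. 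Both arguments hinge on the same positivity fact, but yours buys a cleaner statement --- convexity on all of $\R^{d\times d}$ (not just $\bbS^d_{++}$), and it works verbatim when $\mL$ is merely PSD --- while the paper's avoids introducing $\mR^{1/2}$ and stays entirely elementary. Your closing remarks about $g_v$ and the sub-level sets restate the surrounding argument of Proposition~\ref{thm:opt-D} rather than the lemma itself, which is harmless but not needed here.
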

  The proof can be found in \Cref{sec:lemma:convexity-func}.    
  According to \Cref{lemma:convexity-func}, the first term of $g_v(\mD)$ is a convex function, and we know that the second term is linear in $\mD$. 
    As a result, $g_v(\mD)$ is a convex function w.r.t. $\mD$ for every $v \neq 0$, thus the sub-level set $\{\mD\in\bbS^d_{++}\mid g_v(\mD) \leq 0\}$ is a convex set for every $v \neq 0$. 
    The intersection of all those convex sets corresponding to every $v \neq 0$ is still a convex set, which tells us the original condition \eqref{eq:ineq_D} is convex. 
    This concludes the proof of the proposition.

\section{Layer-wise case}
In this section, we provide interpretations about some of the results and conclusions we had in \Cref{sec:block}.

  \subsection{Proof of \Cref{thm:block_diag}}

  Note that $\Exp{\mS^k \mD \mL \mD \mS^k} = \Diag\big(\mQ^k_1,\ldots,\mQ^k_{\nlay} \big)$, where $\mQ^k_i := \gamma_{i}^2 \Exp{\mS_{i}^k \mW_{i} \mL_{i} \mD_{i}
   \mS_{i}^k}$. In other words,
    \begin{equation*}    
      \Exp{\mS^k \mD \mL \mD \mS^k} = \begin{pmatrix} 
      \mQ^k_1 & 0  & \cdots &  0 \\ 
      0 &  \mQ^k_2  & \cdots &  0 \\ 
      \vdots & \vdots  & \ddots &  \vdots \\ 
      0 & 0  & \cdots &  \mQ^k_{\nlay}  
      \end{pmatrix},
    \end{equation*}    
  which means that \eqref{eq:ineq_D} holds if and only if $\mQ^k_i \preceq \gamma_i \mW_i$ for all $i \in [\nlay]$, which holds if and only if \eqref{eq:thm-gamma_i} holds.
  Therefore,  Theorem~\ref{thm:main-D} applies, and we conclude that 
  \begin{equation}\label{eq:main-D_0} 
  \frac{1}{K} \sum_{k=0}^{K-1} \Exp{\norm{\nabla f(x^k)}_{\Gamma \mW}^2} \leq \frac{2 (f(x^0) - f^{\inf})}{K}. 
  \end{equation}
  To obtain \eqref{eq:main-D-normalized-det}, it remains to
   multiply both sides of \eqref{eq:main-D_0} by $\frac{1}{\det(\Gamma\mW)^{1/d}}$.

\subsection{Bernoulli-$q$ sketch for \ref{eq:alg2}}
The following corollary of \Cref{thm:block_diag} computes the communication complexity of \ref{eq:alg2} in the block diagonal setting with Bernoulli-$q$.
\begin{corollary}
    \label{cor:bern-sketch}
    Let  $\mT^k_i$ for the $i$-th layer in \ref{eq:alg2} be the Bern-$q_i$  sketch  which is defined as 
    \begin{equation}
        \label{eq:def-bern-sketch}
        \mT^k_i = \frac{\eta_i}{q_i} \mI_{d_{i}}, \quad \textnormal{where} \quad \eta_i \sim \textnormal{Bernoulli}(q_i).
    \end{equation} 
    Then, the communication complexity of \eqref{eq:alg2} is given by, 
    \begin{equation}
        \label{eq:comm-complex-layer}
        \frac{2(f(x^0) - f^{\inf})}{\epsilon^2}\left(\sum_{i=1}^{\nlay}q_id_i\right)\prod_{i=1}^{\nlay}\left(\frac{1}{q_i}\right)^\frac{d_i}{d}\det(\mL)^\frac{1}{d}.
    \end{equation}
    Furthermore, the communication complexity is minimized if the probabilities when $q_i = q, \quad \forall i \in [\nlay]$ and the minimum value is equal to 
    \begin{eqnarray}
        \label{eq:min-cc-alg2}
        \frac{2(f(x^0) - f^{\inf})\cdot dx~\det(\mL)^\frac{1}{d}}{\epsilon^2}  .
    \end{eqnarray}
\end{corollary}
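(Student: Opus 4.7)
The plan is to instantiate Theorem 1(ii) with the optimal stepsize prescription \eqref{eq:opt-D-alg2}, specialized to the block diagonal Bernoulli sketch, and then separately count the expected number of coordinates communicated per iteration. First I would compute $\Exp{\mT^k_i \mL_i \mT^k_i}$ layer by layer. Since $\mT^k_i=(\eta_i/q_i)\mI_{d_i}$ with $\eta_i\sim\mathrm{Bernoulli}(q_i)$, and $\eta_i^2=\eta_i$, one immediately gets $\Exp{\mT^k_i\mL_i\mT^k_i}=(1/q_i)\mL_i$. Because $\mT^k=\Diag(\mT^k_1,\dots,\mT^k_\nlay)$ and $\mL=\Diag(\mL_1,\dots,\mL_\nlay)$ share a common block structure, $\Exp{\mT^k\mL\mT^k}$ is block diagonal, so the optimal stepsize of \eqref{eq:opt-D-alg2} becomes $\mD=\Diag(q_1\mL_1^{-1},\dots,q_\nlay\mL_\nlay^{-1})$, which in particular is in $\bbS^d_{++}$.

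Next I would insert this $\mD$ into the normalized bound \eqref{eq:main-D-normalized-det_1}. Using block diagonality, $\det(\mD)=\prod_{i=1}^{\nlay} q_i^{d_i}\det(\mL_i)^{-1}=\big(\prod_i q_i^{d_i}\big)\det(\mL)^{-1}$, so the iteration complexity to reach $\epsilon^2$ accuracy is
\begin{equation*}
K \;\geq\; \frac{2(f(x^0)-f^{\inf})}{\epsilon^2\,\det(\mD)^{1/d}} \;=\; \frac{2(f(x^0)-f^{\inf})}{\epsilon^2}\cdot\frac{\det(\mL)^{1/d}}{\big(\prod_i q_i^{d_i}\big)^{1/d}}.
\end{equation*}
Then I would account for the expected communication per iteration: block $i$ is transmitted only when $\eta_i=1$, contributing $d_i$ coordinates with probability $q_i$, so the per-iteration expectation is $\sum_{i=1}^{\nlay} q_i d_i$. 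Multiplying by the bound on $K$ and using $\prod_i(1/q_i)^{d_i/d}=\big(\prod_i q_i^{d_i}\big)^{-1/d}$ yields \eqref{eq:comm-complex-layer}.

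For the final claim I would minimize the $q$-dependent factor $\Phi(q):=\big(\sum_i q_i d_i\big)\prod_i q_i^{-d_i/d}$. The weighted AM--GM inequality with weights $d_i/d$ (which sum to $1$) gives
\begin{equation*}
\frac{\sum_{i=1}^{\nlay} d_i q_i}{d} \;\geq\; \prod_{i=1}^{\nlay} q_i^{d_i/d},
\end{equation*}
so $\Phi(q)\geq d$, with equality iff all $q_i$ are equal. Substituting this back produces the stated minimum \eqref{eq:min-cc-alg2}.

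The only mildly delicate point is the first step: invoking Theorem 1(ii) requires that $\mD$ satisfy the constraint \eqref{eq:ineq_D-var}, and that the chosen $\mD$ actually minimizes $\det(\mD^{-1})$ under this constraint. The former is automatic by the computation above (we have equality in \eqref{eq:ineq_D-var}), and the latter follows from the characterization \eqref{eq:opt-D-2} since monotonicity of $\det$ on $\bbS^d_{++}$ forces the maximal admissible $\mD$ to be the one we chose. Everything else is mechanical; the AM--GM step is where the symmetry in $\{q_i\}$ is exploited, and it is the one place where an argument beyond direct substitution is needed.
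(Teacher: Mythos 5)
Your proposal is correct and follows essentially the same route as the paper: compute $\Exp{\mT^k_i\mL_i\mT^k_i}=\mL_i/q_i$ blockwise, plug the resulting optimal stepsize $\mD=\Diag(q_1\mL_1^{-1},\dots,q_\nlay\mL_\nlay^{-1})$ into the normalized bound, multiply the iteration count by the expected per-iteration cost $\sum_i q_i d_i$, and minimize via the weighted AM--GM inequality with weights $d_i/d$ (the paper phrases this as Jensen's inequality for $\log$, which is the same estimate). Your added remark justifying why the maximal admissible $\mD$ is the determinant-optimal one is a small bonus the paper leaves implicit.
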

\begin{proof}
    For \ref{eq:alg2}, its convergence requires \eqref{eq:opt-D-2}. We are using Bernoulli sketch here, so we deduce that
    \begin{eqnarray*}
        \Exp{\mT^k\mL\mT^k} &=& \Exp{\Diag(\mT^k_1\mL_1\mT^k_1, ..., \mT^k_\nlay\mL_\nlay\mT^k_\nlay)} \\
        &=&\Diag\left(\Exp{\mT^k_1\mL_1\mT^k_1}, ..., \Exp{\mT^k_\nlay\mL_\nlay\mT^k_\nlay}\right).
    \end{eqnarray*}
    Using the fact that for each block, we have 
    \begin{eqnarray*}
        \Exp{\mT^k_i\mL_i\mT^k_i} = (1-q_i)\mO_{d_i}\mL_i\mO_{d_i} + q_i\cdot\frac{1}{q_i^2}\mI_{d_i}\mL_i\mI_{d_i} = \frac{\mL_i}{q_i}, 
    \end{eqnarray*}
    we obtain 
    \begin{eqnarray*}
        \Exp{\mT^k\mL\mT^k} = \Diag\left(\frac{\mL_1}{q_1}, ..., \frac{\mL_\nlay}{q_\nlay}\right).
    \end{eqnarray*}
    Recalling \eqref{eq:opt-D-2}, the best stepsize possible is therefore given by
    \begin{eqnarray*}
        \mD &=& \left(\Exp{\mT^k\mL\mT^k}\right)^{-1} \\
        &=& \Diag^{-1}\left(\frac{\mL_1}{q_1}, ..., \frac{\mL_\nlay}{q_\nlay}\right) \\
        &=& \Diag\left(q_1\mL_1^{-1}, ..., q_\nlay\mL_\nlay^{-1}\right).
    \end{eqnarray*}
    From \eqref{eq:main-D-normalized-det_1}, we know that in order for \ref{eq:alg2} to converge to $\epsilon^2$ error level, we need 
    \begin{eqnarray*}
        \frac{2(f(x^0) - f^{\inf})}{\det(\mD)^\frac{1}{d}\, K} \leq \epsilon^2,
    \end{eqnarray*}
    which means that we need 
    \begin{eqnarray*}
        K \geq \frac{2(f(x^0) - f^{\inf})}{\det(\mD)^\frac{1}{d}\, \epsilon^2} = \frac{1}{\det(\mD)^\frac{1}{d}}\cdot \frac{2(f(x^0) - f^{\inf})}{\epsilon^2},
    \end{eqnarray*}
    iterations. For each iteration, the number of bits sent in expectation is equal to ~$\sum_{i=1}^{\nlay} q_id_i.$
    As a result, the communication complexity is given by, if we leave out the constant factor $2(f(x^0) - f^{\inf})/\epsilon^2$,
    \begin{eqnarray*}
        \left(\sum_{i=1}^{\nlay}q_id_i\right)\cdot\frac{1}{\det(\mD)^\frac{1}{d}} &=&  \left(\sum_{i=1}^{\nlay}q_id_i\right)\cdot\det(\mD^{-1})^\frac{1}{d}\\
        &=& \left(\sum_{i=1}^{\nlay}q_id_i\right)\cdot\left(\prod_{i=1}^{\nlay}\det(\frac{\mL_i}{q_i})\right)^\frac{1}{d} \\
        &=& \left(\sum_{i=1}^{\nlay}q_id_i\right)\cdot\prod_{i=1}^{\nlay}\left(\frac{1}{q_i}\right)^\frac{d_i}{d}\left(\prod_{i=1}^{l}\det(\mL_i)\right)^\frac{1}{d}\\
        &=&\left(\sum_{i=1}^{\nlay}q_id_i\right)\cdot\prod_{i=1}^{\nlay}\left(\frac{1}{q_i}\right)^\frac{d_i}{d}\cdot\det(\mL)^\frac{1}{d}. \\
    \end{eqnarray*}
    To obtain the optimal probability $q_i$, we can do the following transformation
    \begin{eqnarray*}
        \left(\sum_{i=1}^{\nlay}q_id_i\right)\cdot\frac{1}{\det(\mD)^\frac{1}{d}} &=& \left(\sum_{i=1}^{\nlay}q_i\frac{d_i}{d}\right)\cdot\prod_{i=1}^{\nlay}\left(\frac{1}{q_i}\right)^\frac{d_i}{d}\cdot d\det(\mL)^\frac{1}{d}.
    \end{eqnarray*}
    Therefore, it is equivalent to minimizing the coefficient
    \begin{eqnarray*}
        \left(\sum_{i=1}^{\nlay}q_i\frac{d_i}{d}\right)\cdot\prod_{i=1}^{\nlay}\left(\frac{1}{q_i}\right)^\frac{d_i}{d}.
    \end{eqnarray*}
    If we denote $\alpha_i = \frac{d_i}{d}$, then we know that $\alpha_i \in (0, 1]$ and $\sum_{i=1}^{\nlay} \alpha_i = 1$, the above coefficient turns into 
    \begin{eqnarray*}
        \left(\sum_{i=1}^{\nlay}\alpha_iq_i\right)\prod_{i=1}^{\nlay}\left(\frac{1}{q_i}\right)^{\alpha_i}.
    \end{eqnarray*}
    From the strict log-concavity of the $\log(\cdot)$ function and Jensen's inequality  we have 
    \begin{eqnarray*}
        \left(\sum_{i=1}^{\nlay}\alpha_iq_i\right) \geq \prod_{i=1}^{\nlay}q_i^{\alpha_i}.
    \end{eqnarray*}
    The  identity is obtained if and only if $q_i = q_j$, {for all} $i\neq j$.
    Thus, we get 
    \begin{eqnarray*}
        \left(\sum_{i=1}^{\nlay}\alpha_iq_i\right)\prod_{i=1}^{\nlay}\left(\frac{1}{q_i}\right)^{\alpha_i} \geq 1,
    \end{eqnarray*}
    which in its turn implies that the minimum of expected communication complexity is equal to 
        $d\cdot\det(\mL)^\frac{1}{d}$.
    The equality is achieved when the probabilities are equal. This concludes the proof.
\end{proof}
By utilizing the block diagonal structure, we are able to design special sketches that allow us to compress for free. 
This can be seen from row $12$, where the communication complexity of using Bernoulli compressor with equal probabilities for \ref{eq:alg2} in expectation is the same with GD, but the number of bits sent per iteration is reduced. 

\subsection{General cases for \ref{eq:alg1}}
The first part (row $1$ to row $8$) of \Cref{Table:comm-complex-single-node} records the communication complexities of \ref{eq:alg1} in the block diagonal setting and in the general setting. Depending on the types of sketches $\mS^k_i$ and matrices $\mW_i$ we are using, we can calculate the optimal scaling factor $\gamma_i$ using \Cref{thm:block_diag}. According to \eqref{eq:main-D-normalized-det_1}, in order to reach an error level of $\epsilon^2$, we need 
\begin{eqnarray}
    \label{eq:obtain-cc}
    K \geq \frac{1}{\det(\mD)^\frac{1}{d}}\cdot\frac{2(f(x^0) - f^{\inf})}{\epsilon^2},
\end{eqnarray}
where $K$ is the number of iterations in total. We can then obtain the communication complexity taking into account the number of bits transferred in each iteration in the block diagonal case. 
The same applies to the general case which can be viewed as a special case of the block diagonal setting where there is only $1$ block. 

\subsection{General cases for \ref{eq:alg2}}
The second part of \Cref{Table:comm-complex-single-node} (row $9$ to row $12$) records the communication complexities of \ref{eq:alg2}. 
Unlike \ref{eq:alg1}, we can always obtain the best stepsize matrix $\mD$ here if the sketch $\mS^k$ is given. The communication complexity can then be obtained in the same way as in the previous case using \eqref{eq:obtain-cc} combined with the number of bits sent per iteration. 

\subsection{Interpretations of \Cref{Table:comm-complex-single-node}}
The communication complexity of the Gradient Descent algorithm (row 13) in the general non-convex setting is equal to 
$d \lambda_{\max}(\mL)$, where $\lambda_{\max}(\mL)$ serves as the smoothness constant of the function.
Compared to the GD, \ref{eq:alg1} and \ref{eq:alg2} that use matrix stepsize without compression (row $1$ and $9$) are better in terms of both iteration and communication complexity. 
There are some results in the table that need careful analysis and we them present below.
In the remainder of the section we will omit the constant multiplier $2(f(x^0) - f^{\inf})/{\epsilon^2}$ in communication complexity,
as it appears for every setting in the table and thus is redundant for comparison purposes.

\subsubsection{Comparison of row $5$ and $7$}
\label{sec:worse}
Here we show that the communication complexity given in row $5$ is always worse than that of row $7$. This can be seen from the following proposition.
\begin{proposition}
    \label{cor:comp-table-result}
    For any matrix $\mL \in \bbS^d_{++}$, the following inequality holds
    \begin{eqnarray*}
        \lambda_{\max}\left(\mL^{\frac{1}{2}}\diag(\mL^{-1})\mL^{\frac{1}{2}}\right)\cdot\det(\mL)^\frac{1}{d} \geq \det(\diag(\mL))^\frac{1}{d}.
    \end{eqnarray*}
\end{proposition}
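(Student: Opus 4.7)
The plan is to relate the largest eigenvalue of $\mA \eqdef \mL^{1/2}\diag(\mL^{-1})\mL^{1/2}$ to its trace, then to use AM--GM to split the trace into a product involving $\mL_{ii}$ and $(\mL^{-1})_{ii}$, and finally to invoke Hadamard's inequality applied to $\mL^{-1}$ to absorb the residual factor against $\det(\mL)^{1/d}$. The key observation is that by cyclicity of the trace, $\tr(\mA)=\tr\!\left(\diag(\mL^{-1})\mL\right)=\sum_{i=1}^d (\mL^{-1})_{ii}\mL_{ii}$, which is precisely the sum whose geometric mean produces both $\det(\diag(\mL))^{1/d}$ (what we want) and an auxiliary factor $\prod_i(\mL^{-1})_{ii}^{1/d}$ (which Hadamard will kill).

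First I would note that $\mA \in \bbS^d_{+}$ (since $\mL \succ 0$ implies $\diag(\mL^{-1})\succ 0$), and therefore
\begin{equation*}
\lambda_{\max}(\mA)\;\geq\;\frac{\tr(\mA)}{d}\;=\;\frac{1}{d}\sum_{i=1}^{d}(\mL^{-1})_{ii}\,\mL_{ii}.
\end{equation*}
Next I would apply the AM--GM inequality to the $d$ positive numbers $(\mL^{-1})_{ii}\,\mL_{ii}$ to obtain
\begin{equation*}
\frac{\tr(\mA)}{d}\;\geq\;\left(\prod_{i=1}^{d}(\mL^{-1})_{ii}\,\mL_{ii}\right)^{1/d}\;=\;\left(\prod_{i=1}^{d}(\mL^{-1})_{ii}\right)^{1/d}\cdot\det(\diag(\mL))^{1/d}.
\end{equation*}
Multiplying both sides by $\det(\mL)^{1/d}$, the desired inequality reduces to showing $\det(\mL)\,\prod_{i=1}^{d}(\mL^{-1})_{ii}\geq 1$.

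Finally, this last inequality is Hadamard's inequality applied to the positive definite matrix $\mL^{-1}$: $\det(\mL^{-1})\leq \prod_{i=1}^{d}(\mL^{-1})_{ii}$, and since $\det(\mL^{-1})=1/\det(\mL)$, rearranging yields exactly $\det(\mL)\prod_i(\mL^{-1})_{ii}\geq 1$. Combining the chain of inequalities completes the proof. I do not anticipate a genuine obstacle here; the only subtle point is recognizing that the seemingly wasteful step $\lambda_{\max}(\mA)\geq \tr(\mA)/d$ is in fact strong enough because the trace naturally factors into a product that pairs each $\mL_{ii}$ with its companion $(\mL^{-1})_{ii}$, letting Hadamard's inequality (applied to $\mL^{-1}$, not to $\mL$) do the remaining work. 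Equality propagates through all three steps exactly when $\mL$ is diagonal, which is consistent with the sanity check $\mL=\mI$.
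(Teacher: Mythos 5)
Your proof is correct and follows essentially the same route as the paper's: both reduce the claim to lower-bounding $\lambda_{\max}\left(\mL^{1/2}\diag(\mL^{-1})\mL^{1/2}\right)$ by the geometric mean of the numbers $\mL_{ii}(\mL^{-1})_{ii}$ and then invoke Hadamard's inequality for $\mL^{-1}$. The only (cosmetic) difference is the intermediate step: you pass through $\lambda_{\max}\geq \tr/d$ followed by AM--GM, whereas the paper bounds $\lambda_{\max}$ below by the geometric mean of the diagonal entries of $\mL\diag(\mL^{-1})$ directly; your trace-based variant is arguably cleaner since it works with a manifestly symmetric matrix.
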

\begin{proof}
    The inequality given in \Cref{cor:comp-table-result} can be reformulated as 
    \begin{eqnarray*}
        \lambda_{\max}(\mL\diag(\mL^{-1})) \geq \det(\mL^{-1}\diag(\mL))^\frac{1}{d}.
    \end{eqnarray*}
    We use the notation 
    \begin{eqnarray*}
        \mM_1 = \mL\diag(\mL^{-1}), \quad \mM_2 = \mL^{-1}\diag(\mL),
    \end{eqnarray*}
    and notice that for any $i \in [d]$, we have
    \begin{eqnarray*}
        (\mM_1)_{ii} = (\mL)_{ii} \cdot (\mL^{-1})_{ii} = (\mM_2)_{ii}.
    \end{eqnarray*}
    Here the notation $(\mA)_{ij}$ refers to the entry $(i,j)$ of matrix $\mA$.
    As a result
    \begin{eqnarray*}
        \lambda_{\max}(\mM_1) &\geq& \left(\prod_{i=1}^{d}(\mM_1)_{ii}\right)^\frac{1}{d} = \left(\prod_{i=1}^{d}(\mM_2)_{ii}\right)^\frac{1}{d} \geq \det(\mM_2)^\frac{1}{d},
    \end{eqnarray*}
    where the first inequality is due to the fact that each diagonal element is upper-bounded by the maximum eigenvalue value, while the second one is obtained using the fact that the product of the diagonal elements is an upper bound of the determinant.
\end{proof}
From \Cref{cor:comp-table-result}, it immediately follows that the result in row $7$ is better than row $5$ in terms of both communication and iteration complexity.

\subsubsection{Comparison of row $6$ and $7$}
In this section we bring an examples of matrices $\mL$ which show that rows $6$ and $7$ are not comparable in general. 
Let $d = 2$ and $\mL \in \bbS^2_{++}$. If we pick 
\begin{equation*}
    \mL = \begin{pmatrix}
        16 & 0 \\
        0 & 1 \\
    \end{pmatrix},
\end{equation*}
then
\begin{eqnarray*}
    \det(\diag(\mL))^\frac{1}{d} &=& 4;\\
    \lambda_{\max}^\frac{1}{2}(\mL)\det(\mL)^\frac{1}{2d} &=& 8.
\end{eqnarray*}
However, if we pick 
\begin{equation*}
    \mL = \begin{pmatrix}
        16 & 3.9 \\
        3.9 & 1 \\
    \end{pmatrix},
\end{equation*}
then 
\begin{eqnarray*}
    \det(\diag(\mL))^\frac{1}{d} &=& 4;\\
    \lambda_{\max}^\frac{1}{2}(\mL)\det(\mL)^\frac{1}{2d} &\simeq& 3.88.
\end{eqnarray*}
From this example, we can see that the relation between the results in row $6$ and $7$ may vary depending on the value of $\mL$.

\section{Distributed case}

    \subsection{Proof of \Cref{thm:dist-alg1}}
        We first present some simple technical lemmas whose proofs are deferred to \Cref{sec:proofs-lemmas}.
        Let us recall that  $\mD \in \bbS^d_{++}$ is the stepsize matrix, $\mL, \mL_i \in \bbS^d_{++}$ are the smoothness matrices for $f$ and $f_i$, respectively.
        \begin{lemma}[Variance Decomposition]
            \label{lemma:var-decomp}
            For any random vector $x \in \R^d$, and any matrix $\mM \in \bbS^d_{+}$, the following identity holds 
            \begin{equation}
                \label{eq:d-norm-var-decomp}
                \Exp{\norm{x - \Exp{x}}_{\mM}^2} = \Exp{\norm{x}_{\mM}^2} - \norm{\Exp{x}}_{\mM}^2.
            \end{equation}
        \end{lemma}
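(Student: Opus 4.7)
The plan is to prove this matrix-weighted variance decomposition by the standard ``expand the square'' strategy, being careful that the semi-definite matrix $\mM$ is symmetric so that cross-terms behave as expected.

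First I would set $\mu \eqdef \Exp{x}$, which is a deterministic vector in $\R^d$, and expand the $\mM$-seminorm via bilinearity of the inner product:
\begin{equation*}
\norm{x-\mu}_{\mM}^2 = \inner{\mM(x-\mu)}{x-\mu} = \inner{\mM x}{x} - \inner{\mM x}{\mu} - \inner{\mM \mu}{x} + \inner{\mM \mu}{\mu}.
\end{equation*}

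Next I would take expectations term by term. The first term gives $\Exp{\norm{x}_{\mM}^2}$ by definition, and the last is deterministic so it equals $\norm{\mu}_{\mM}^2$. The two cross-terms require the symmetry of $\mM$, which is part of the assumption $\mM \in \bbS^d_{+}$: by linearity of expectation and of the inner product,
\begin{equation*}
\Exp{\inner{\mM x}{\mu}} = \inner{\mM\, \Exp{x}}{\mu} = \inner{\mM \mu}{\mu} = \norm{\mu}_{\mM}^2,
\end{equation*}
and analogously $\Exp{\inner{\mM \mu}{x}} = \inner{\mM\mu}{\mu} = \norm{\mu}_{\mM}^2$ (using $\mM^\top = \mM$ if needed to move $\mM$ across the inner product). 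Substituting these four evaluations into the expanded expression produces $\Exp{\norm{x}_{\mM}^2} - 2\norm{\mu}_{\mM}^2 + \norm{\mu}_{\mM}^2 = \Exp{\norm{x}_{\mM}^2} - \norm{\mu}_{\mM}^2$, which is exactly \eqref{eq:d-norm-var-decomp}.

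There is no real obstacle here; the only point worth flagging is a mild measurability/integrability caveat, namely that the identity is only meaningful when $\Exp{\norm{x}_{\mM}^2} < \infty$ (otherwise both sides are $+\infty$ and the identity holds trivially in the extended sense). Since the lemma is used downstream only for sketched gradients of a smooth function at a fixed iterate, where all relevant moments are finite, this caveat does not affect any application. The whole argument is a one-screen computation and does not require any structural property of $\mM$ beyond symmetry.
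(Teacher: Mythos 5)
Your proof is correct and follows essentially the same route as the paper: expand the quadratic form $\norm{x-\Exp{x}}_{\mM}^2$ by bilinearity, take expectations term by term, and observe that the two cross-terms each reduce to $\norm{\Exp{x}}_{\mM}^2$. The measurability/integrability remark is a reasonable aside but not something the paper bothers with either.
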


        \begin{lemma}
            \label{lemma:var-sep}
            Assume $\{a_i\}_{i=1}^n$ is a set of independent random vectors in $\R^d$, which satisfy 
            \begin{equation*}
                \Exp{a_i} = 0, \quad \forall i \in [n].
            \end{equation*}
            Then, for any $\mM \in \bbS^d_{++}$, we have 
            \begin{equation}
                \label{eq:var-sep}
                \Exp{\norm{\frac{1}{n}\sum_{i=1}^{n}a_i}_{\mM}^2} = \frac{1}{n^2}\sum_{i=1}^n\Exp{\norm{a_i}^2_{\mM}}.
            \end{equation}
        \end{lemma}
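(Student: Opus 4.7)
The plan is to expand the weighted squared norm of the sum into a double sum and use independence together with the zero-mean assumption to kill the cross terms. First, I would write
\begin{equation*}
\norm{\frac{1}{n}\sum_{i=1}^{n} a_i}_{\mM}^2 = \frac{1}{n^2}\inner{\mM \sum_{i=1}^{n} a_i}{\sum_{j=1}^{n} a_j} = \frac{1}{n^2}\sum_{i,j=1}^{n} \inner{\mM a_i}{a_j},
\end{equation*}
and then split the double sum into the diagonal terms with $i=j$ and the off-diagonal terms with $i\neq j$, taking expectations termwise.

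For the off-diagonal contribution, the key step is to use independence of $a_i$ and $a_j$ together with bilinearity of the inner product: $\Exp{\inner{\mM a_i}{a_j}} = \inner{\mM \Exp{a_i}}{\Exp{a_j}} = 0$, since both expectations vanish by assumption. For the diagonal contribution, $\Exp{\inner{\mM a_i}{a_i}} = \Exp{\norm{a_i}_{\mM}^2}$ by definition of the $\mM$-weighted norm. Summing and pulling out the $1/n^2$ factor gives the desired identity.

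There is essentially no obstacle: this is the standard bias-variance style argument that the variance of a sum of independent centered random vectors equals the sum of variances, just instantiated for the inner product twisted by the positive definite matrix $\mM$. As a sanity check, one could alternatively derive the claim by first applying Lemma~\ref{lemma:var-decomp} with $x = \frac{1}{n}\sum_{i=1}^{n} a_i$ (noting $\Exp{x}=0$) to reduce to computing $\Exp{\norm{x}_{\mM}^2}$, and then expanding and invoking independence as above. Either route requires only finiteness of the second moments on the right-hand side, which is implicit in the statement and is the only measurability/integrability condition needed to justify interchanging expectation and the finite double sum.
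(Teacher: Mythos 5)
Your proposal is correct and follows essentially the same route as the paper: expand the $\mM$-weighted squared norm into a double sum, keep the diagonal terms, and kill the off-diagonal terms by using independence to factor $\Exp{\inner{\mM a_i}{a_j}}$ into $\inner{\mM\Exp{a_i}}{\Exp{a_j}} = 0$. No gaps.
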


        \begin{lemma}
            \label{lemma:property-of-sketch}
            For any vector $x \in \R^d$, and sketch matrix $\mS \in \bbS^d_{+}$ taken from some distribution $\cS$ over $\mS^d_{+}$, which satisfies
            \begin{equation*}
                \Exp{\mS} = \mI_d.
            \end{equation*}
            Then for any matrix $\mM \in \mS^d_{++}$, we have the following identity holds,
            \begin{equation}
                \label{eq:ineq-p-of-sketch}
                \Exp{\norm{\mS x - x}_{\mM}^2} = \norm{x}_{\Exp{\mS\mM\mS} - \mM}^2.
            \end{equation}
        \end{lemma}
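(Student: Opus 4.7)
The plan is to reduce the claim to a direct application of Lemma~\ref{lemma:var-decomp} (the variance decomposition) together with the linearity of expectation. The key observation is that, because $\Exp{\mS} = \mI_d$ and the vector $x$ is deterministic, the random vector $\mS x$ has mean $\Exp{\mS x} = \Exp{\mS}\, x = x$. Hence the quantity $\mS x - x$ is precisely $\mS x - \Exp{\mS x}$, and its $\mM$-weighted second moment is, by definition, the $\mM$-weighted variance of $\mS x$.

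First I would invoke Lemma~\ref{lemma:var-decomp} with the random vector $\mS x$ and the weight matrix $\mM$ to write
\begin{equation*}
\Exp{\norm{\mS x - x}_{\mM}^2} \;=\; \Exp{\norm{\mS x}_{\mM}^2} \;-\; \norm{x}_{\mM}^2.
\end{equation*}
Next I would evaluate the first term on the right by expanding the quadratic form: $\norm{\mS x}_{\mM}^2 = \inp{\mM \mS x}{\mS x} = x^\top \mS \mM \mS x$. Since $x$ is deterministic, linearity of expectation gives $\Exp{\norm{\mS x}_{\mM}^2} = x^\top \Exp{\mS\mM\mS} x = \norm{x}^2_{\Exp{\mS\mM\mS}}$. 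Substituting back and combining the two quadratic forms in $x$ yields the desired identity $\Exp{\norm{\mS x - x}_{\mM}^2} = \norm{x}^2_{\Exp{\mS\mM\mS} - \mM}$.

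There is essentially no hard step here; the only thing worth remarking on is that the notation $\norm{x}^2_{\Exp{\mS\mM\mS} - \mM}$ should be read as the quadratic form $x^\top(\Exp{\mS\mM\mS} - \mM)x$, which is automatically nonnegative. Indeed, the map $\mS \mapsto \mS\mM\mS$ is convex on $\bbS^d_{+}$ (as the paper has already used in~\eqref{eq:opt-D-2}), so Jensen's inequality gives $\Exp{\mS\mM\mS} \succeq \Exp{\mS}\mM\Exp{\mS} = \mM$, making $\Exp{\mS\mM\mS} - \mM \succeq \mO_d$ and justifying the norm notation as a genuine semi-norm. This sanity check is the only subtle point; the computation itself is two lines.
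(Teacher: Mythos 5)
Your proof is correct and follows exactly the paper's route: note that $\Exp{\mS x}=x$, apply Lemma~\ref{lemma:var-decomp} to the random vector $\mS x$, and pull the expectation inside the quadratic form by linearity. The extra remark that $\Exp{\mS\mM\mS}-\mM\succeq \mO_d$ by Jensen's inequality is a sound sanity check the paper omits, but otherwise the arguments coincide.
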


        \begin{lemma} 
            \label{lemma:imp-smt-upd}
            If we have a differentiable function $f:\R^d\rightarrow \R$, that is $\mL$ matrix smooth and lower bounded by $f^{\inf}$, if we assume $\mL \in \bbS^d_{++}$, then the following inequality holds
            \begin{equation}
                \label{eq:imp-ineq-smooth}
                \inner{\nabla f(x)}{\mL^{-1}\nabla f(x)} \leq 2(f(x) - f^{\inf}).
            \end{equation}
        \end{lemma}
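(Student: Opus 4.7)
The plan is to mimic the classical scalar-smoothness proof of $\|\nabla f(x)\|^2 \le 2L(f(x)-f^{\inf})$, but in its matrix form, by combining the quadratic upper bound from matrix smoothness with the global lower bound $f^{\inf}$. The key observation is that the right-hand side of the matrix smoothness inequality, viewed as a function of $y$, is a convex quadratic in $y-x$ whose unique minimizer has a clean closed form involving $\mL^{-1}$ (this is well-defined since $\mL \in \bbS^d_{++}$).

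The steps I would carry out, in order, are as follows. First, apply Assumption~\ref{ass:matrix_L} at the pair $(y,x)$, giving
\begin{equation*}
f(y) \leq f(x) + \inner{\nabla f(x)}{y-x} + \tfrac{1}{2}\inner{\mL(y-x)}{y-x} \qquad \forall y \in \R^d.
\end{equation*}
Second, set $y := x - \mL^{-1}\nabla f(x)$, so that $y-x = -\mL^{-1}\nabla f(x)$. A direct substitution, using the symmetry of $\mL^{-1}$ and the identity $\mL \mL^{-1} = \mI_d$, yields
\begin{equation*}
f(y) \leq f(x) - \inner{\nabla f(x)}{\mL^{-1}\nabla f(x)} + \tfrac{1}{2}\inner{\nabla f(x)}{\mL^{-1}\nabla f(x)} = f(x) - \tfrac{1}{2}\inner{\nabla f(x)}{\mL^{-1}\nabla f(x)}.
\end{equation*}
Third, invoke Assumption~\ref{ass:finf} in the form $f(y) \geq f^{\inf}$ for this particular $y$, and rearrange to obtain $\inner{\nabla f(x)}{\mL^{-1}\nabla f(x)} \leq 2(f(x) - f^{\inf})$, which is exactly \eqref{eq:imp-ineq-smooth}.

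There is no real obstacle here: the only subtle point is checking that the choice $y = x - \mL^{-1}\nabla f(x)$ is legitimate, which requires $\mL^{-1}$ to exist; this is guaranteed by the hypothesis $\mL \in \bbS^d_{++}$. Equivalently, one may note that the chosen $y$ is precisely the unconstrained minimizer of the quadratic upper bound on the right-hand side of the matrix smoothness inequality, so that the argument gives the tightest possible such bound; this perspective makes it clear that the factor $\tfrac{1}{2}$ and the matrix $\mL^{-1}$ arise naturally, and generalizes the scalar identity $\min_t \{f(x) - t\|\nabla f(x)\|^2 + \tfrac{L}{2}t^2\|\nabla f(x)\|^2\} = f(x) - \tfrac{1}{2L}\|\nabla f(x)\|^2$ to the anisotropic setting.
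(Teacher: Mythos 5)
Your proposal is correct and is essentially identical to the paper's own proof: both apply the matrix smoothness inequality at $y = x - \mL^{-1}\nabla f(x)$, bound $f(y)$ below by $f^{\inf}$, and rearrange. The added remark that this $y$ minimizes the quadratic upper bound is a nice sanity check but does not change the argument.
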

        Let the gradient estimator of our algorithm be defined as 
        \begin{eqnarray}
            \label{eq:def-g}
            && g(x) := \frac{1}{n}\sum_{i=1}^n\mS_i^k\nabla f_i(x),
        \end{eqnarray}
        as a result, \ref{eq:alg1} in the distributed case can then be written as
        \begin{eqnarray*}
            x^{k+1} = x^k - \mD g(x^k).
        \end{eqnarray*}
        Notice that we have 
        \begin{eqnarray}
            \label{eq:unb-g}
            && \Exp{g(x^k)\mid x^k} = \frac{1}{n}\sum_{i=1}^n\Exp{\mS^k_i}\nabla f_i(x^k) = \nabla f(x^k).
        \end{eqnarray}
        We start with applying the $\mL$-matrix smoothness of $f$:
        \begin{eqnarray*}
            f(x^{k+1}) &\leq& f(x^k) + \inner{\nabla f(x^k)}{x^{k+1} - x^k} + \frac{1}{2}\inner{\mL(x^{k+1} - x^k)}{x^{k+1} - x^k} \\
            &=& f(x^k) + \inner{\nabla f(x^k)}{-\mD g(x^k)} + \frac{1}{2}\inner{\mL\left(-\mD g(x^k)\right)}{-\mD g(x^k)} \\
            &=& f(x^k) - \inner{\nabla f(x^k)}{\mD g(x^k)} + \frac{1}{2}\inner{\mL\mD g(x^k)}{\mD g(x^k)}. \\
        \end{eqnarray*}
        Taking expectation conditioned on $x^k$, we get 
        \begin{align}
            \label{eq:dis-start}
             \Exp{f(x^{k+1})\mid x^k}
            &\leq f(x^k) - \inner{\nabla f(x^k)}{\mD\Exp{g(x^k) \mid x^k}} + \frac{1}{2}\Exp{\inner{\mL\mD g(x^k)}{\mD g(x^k)}\mid x^k} \notag\\
            &\overset{\eqref{eq:unb-g}}{=} f(x^k) - \inner{\nabla f(x^k)}{\mD\nabla f(x^k)} + \frac{1}{2}\Exp{\inner{\mL\mD g(x^k)}{\mD g(x^k)}\mid x^k} \notag\\
            &= f(x^k) - \norm{\nabla f(x^k)}_{\mD}^2 + \frac{1}{2}\underbrace{\Exp{\inner{\mL\mD g(x^k)}{\mD g(x^k)}\mid x^k}}_{:=T}.
        \end{align}
        Applying \Cref{lemma:var-decomp} to the term $T$ we obtain
        \begin{eqnarray*}
            T &=& \Exp{\norm{g(x^k)}_{\mD\mL\mD}^2\mid x^k} \notag\\
            &\overset{\eqref{eq:d-norm-var-decomp}}{=}& \Exp{\norm{g(x^k) - \Exp{g(x^k)\mid x^k}}_{\mD\mL\mD}^2\mid x^k} + \norm{\Exp{g(x^k)\mid x^k}}_{\mD\mL\mD}^2.       
        \end{eqnarray*}
        From the unbiasedness of the sketches, we have $\Exp{g(x^k) \mid x^k} = \nabla f(x^k)$, which yields
        \begin{eqnarray*}
            T &{=}& \Exp{\norm{g(x^k) - \nabla f(x^k)}_{\mD\mL\mD}^2\mid x^k} + \norm{\nabla f(x^k)}_{\mD\mL\mD}^2 \notag \\
            &=& \Exp{\norm{\frac{1}{n}\sum_{i=1}^{n}\left(\mS^k_i\nabla f_i(x^k) - \nabla f_i(x^k)\right)}_{\mD\mL\mD}^2\mid x^k} + \norm{\nabla f(x^k)}_{\mD\mL\mD}^2 \notag.
        \end{eqnarray*}
        Using \Cref{lemma:var-sep}, we have
        \begin{eqnarray}
            \label{eq:upper-on-t}
            T &{=}& \frac{1}{n^2}\sum_{i=1}^n\Exp{\norm{\mS^k_i\nabla f_i(x^k) - \nabla f_i(x^k)}^2_{\mD\mL\mD}\mid x^k}+ \norm{\nabla f(x^k)}_{\mD\mL\mD}^2 \notag \\
            &\leq& \frac{1}{n^2}\sum_{i=1}^n\Exp{\norm{\mS^k_i\nabla f_i(x^k) - \nabla f_i(x^k)}^2_{\mD\mL\mD}\mid x^k}+ \norm{\nabla f(x^k)}_{\mD}^2,
        \end{eqnarray}
        where the last inequality holds due to the inequality $\mD\mL\mD \preceq \mD$. 
        \begin{lemma}
            \label{lemma:5}
            Let $\mS$ be an unbiased $(\Exp{\mS} = \mI_d)$ sketch drawn randomly from some distribution $\cS$ over $\bbS^d_{+}$. The following bound holds for any $x \in \R^d$ and any matrix $\mA$, 
            \begin{equation}
                \label{eq:lemma-5}
                \Exp{\norm{\mS x - x}^2_{\mD\mL\mD}} \leq \lambda_{\max}\left(\mA^\frac{1}{2}\Exp{\left(\mS - \mI_d\right)\mD\mL\mD\left(\mS - \mI_d\right)}\mA^\frac{1}{2}\right)\cdot\norm{x}^2_{\mA^{-1}}.
            \end{equation}
        \end{lemma}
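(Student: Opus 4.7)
The plan is to reduce the statement to an elementary Rayleigh-quotient bound via a similarity change of variables. First, I would expand the left-hand side as a quadratic form. Using \Cref{ass:sketch} (which guarantees $\mS$ is symmetric) we obtain
\begin{equation*}
\Exp{\norm{\mS x - x}^2_{\mD\mL\mD}}
= x^\top \Exp{(\mS-\mI_d)\mD\mL\mD(\mS-\mI_d)} x =: x^\top \mM x,
\end{equation*}
where $\mM \in \bbS^d_{+}$ since $\mD\mL\mD$ is positive semi-definite and the conjugation by the symmetric matrix $(\mS-\mI_d)$ preserves this property under expectation.

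Next, interpreting the right-hand side norm $\norm{x}_{\mA^{-1}}$ requires $\mA \in \bbS^d_{++}$, so $\mA^{1/2}$ and $\mA^{-1/2}$ exist and are symmetric. I would then perform the substitution $y := \mA^{-1/2} x$, so that $x = \mA^{1/2} y$ and $\norm{x}^2_{\mA^{-1}} = y^\top y = \norm{y}^2$. This converts the quadratic form into
\begin{equation*}
x^\top \mM x = y^\top \mA^{1/2} \mM \mA^{1/2} y.
\end{equation*}
Since $\mA^{1/2}\mM\mA^{1/2}$ inherits symmetric positive semi-definiteness from $\mM$, the variational characterization of the largest eigenvalue (Rayleigh quotient bound) yields
\begin{equation*}
y^\top \mA^{1/2} \mM \mA^{1/2} y \leq \lambda_{\max}\!\brr{\mA^{1/2}\mM\mA^{1/2}} \norm{y}^2.
\end{equation*}
Combining these three displays and reverting $\norm{y}^2 \to \norm{x}^2_{\mA^{-1}}$ gives the claimed bound.

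There is no real obstacle in this argument; the key identification is to recognize that the entire right-hand side factor $\lambda_{\max}(\mA^{1/2}\mM\mA^{1/2})$ is exactly the operator norm of $\mM$ measured with respect to the weighted inner product induced by $\mA^{-1}$, which is precisely the right object to bound a $\mM$-weighted quadratic form of $x$ against an $\mA^{-1}$-weighted norm of $x$. The only things to be careful about are the implicit assumption $\mA \in \bbS^d_{++}$ (which the statement leaves tacit but is required for the norm on the right to make sense) and keeping track of symmetry when moving the transpose through $(\mS - \mI_d)$.
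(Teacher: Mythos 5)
Your proof is correct and follows essentially the same route as the paper: expand the left-hand side as the quadratic form $x^\top \Exp{(\mS-\mI_d)\mD\mL\mD(\mS-\mI_d)}x$, conjugate by $\mA^{\pm 1/2}$, and apply the Rayleigh-quotient bound; your substitution $y=\mA^{-1/2}x$ is just a notational repackaging of the paper's insertion of $\mA^{-1/2}\mA^{1/2}$ on both sides. Your remark that the statement tacitly requires $\mA\in\bbS^d_{++}$ (despite saying ``any matrix $\mA$'') is a fair and correct observation about the lemma as written.
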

        Plugging \eqref{eq:upper-on-t} into \eqref{eq:dis-start} and applying Lemmas \ref{lemma:imp-smt-upd} and \ref{lemma:5} we deduce
        \begin{align*}
            \Exp{f(x^{k+1})\mid x^k} 
            &\leq f(x^k) - \frac{1}{2}\norm{\nabla f(x^k)}_{\mD}^2 \\ 
            &\qquad + \frac{1}{2n^2}\sum_{i=1}^n\Exp{\norm{\mS^k_i\nabla f_i(x^k) - \nabla f_i(x^k)}^2_{\mD\mL\mD}\mid x^k}. \\
            &\overset{\eqref{eq:lemma-5}}{\leq} f(x^k) - \frac{1}{2}\norm{\nabla f(x^k)}_{\mD}^2 \\
            & \qquad + \frac{1}{2n^2}\sum_{i=1}^{n} \lambda_{\max}\left(\Exp{\mL_i^\frac{1}{2}\left(\mS^k_i - \mI_d\right)\mD\mL\mD\left(\mS^k_i - \mI_d\right)\mL_i^\frac{1}{2}}\right)\norm{\nabla f_i(x^k)}^2_{\mL_i^{-1}}\\
            &\overset{\eqref{eq:imp-ineq-smooth}}{\leq} f(x^k) - \frac{1}{2}\norm{\nabla f(x^k)}_{\mD}^2 \\
            & \qquad + \frac{1}{n^2}\sum_{i=1}^{n} \lambda_{\max}\left(\Exp{\mL_i^\frac{1}{2}\left(\mS^k_i - \mI_d\right)\mD\mL\mD\left(\mS^k_i - \mI_d\right)\mL_i^\frac{1}{2}}\right)\left(f_i(x^k) - f^{\inf}_i\right).\\
        \end{align*}
        Recalling the definition of $\lambda_{\mD}$, we bound $f(x^{k+1})$ by
        \begin{eqnarray*}
            \Exp{f(x^{k+1})\mid x^k}
            &\leq& f(x^k) - \frac{1}{2}\norm{\nabla f(x^k)}_{\mD}^2 + \frac{1}{n^2}\sum_{i=1}^{n} \lambda_{\mD}\left(f_i(x^k) - f^{\inf}_i\right)\\
            &=&f(x^k) - \frac{1}{2}\norm{\nabla f(x^k)}_{\mD}^2 + \frac{\lambda_{\mD}}{n}\left(\frac{1}{n}\sum_{i=1}^{n}f_i(x^k) - \frac{1}{n}\sum_{i=1}^{n}f_i^{\inf}\right)\\
            &=& f(x^k) - \frac{1}{2}\norm{\nabla f(x^k)}_{\mD}^2 + \frac{\lambda_{\mD}}{n}(f(x^k) - f^{\inf}) + \frac{\lambda_{\mD}}{n}\left(f^{\inf} - \frac{1}{n}\sum_{i=1}^{n}f_i^{\inf}\right).\\
        \end{eqnarray*}
        Subtracting $f^{\inf}$ from both sides, we get 
        \begin{align*}
            \Exp{f(x^{k+1}) - f^{\inf}\mid x^k}
            \leq f(x^k) - f^{\inf}  & - \frac{1}{2}\norm{\nabla f(x^k)}_{\mD}^2 
             + \frac{\lambda_{\mD}}{n}(f(x^k) - f^{\inf}) \\ & + \frac{\lambda_{\mD}}{n}\left(f^{\inf} - \frac{1}{n}\sum_{i=1}^{n}f_i^{\inf}\right).
        \end{align*}
        Taking expectation, applying tower property and rearranging terms, we get 
        \begin{align}
            \label{eq:dist-recur}
           \Exp{f(x^{k+1}) - f^{\inf}}\notag 
            \leq \left(1 + \frac{\lambda_{\mD}}{n}\right)&\Exp{f(x^k)-f^{\inf}} - \frac{1}{2}\Exp{\norm{\nabla f(x^k)}^2_{\mD}} \\ &+ \frac{\lambda_{\mD}}{n}\left(f^{\inf}- \frac{1}{n}\sum_{i=1}^{n} f^{\inf}_i\right).
        \end{align}
        If we denote 
        \begin{equation*}
            \delta^k = \Exp{f(x^k) - f^{\inf}}, \quad r^k = \Exp{\norm{\nabla f(x^k)}_{\mD}^2}, \quad \Delta^{\inf} = f^{\inf} - \frac{1}{n}\sum_{i=1}^{n}f_i^{\inf},
        \end{equation*}
        then \eqref{eq:dist-recur} becomes 
        \begin{equation}
            \label{eq:recursion-dist}
            \frac{1}{2}r^k \leq \left(1 + \frac{\lambda_{\mD}}{n}\right)\delta^k - \delta^{k+1} + \frac{\lambda_{\mD}\Delta^{\inf}}{n}.
        \end{equation}
        In order to approach the final result, we now follow \cite{stich2019unified}, \cite{khaled2020better} and define an exponentially decaying weighting sequence $\{w_k\}_{k=-1}^K$, where $K$ is the total number of iterations. We fix $w_{-1} > 0$ and define 
        \begin{equation*}
            w_k = \frac{w_{k-1}}{1 + \lambda_{\mD}/n}, \qquad \text{ for all } \quad k \geq 0.
        \end{equation*}
        By multiplying both sides of the recursion \eqref{eq:recursion-dist} by $w_k$, we get
        \begin{eqnarray*}
            \frac{1}{2}w_kr^k \leq w_{k-1}\delta^k - w_k\delta^{k+1} + \frac{\lambda_{\mD}\Delta^{\inf}}{n}w_k.
        \end{eqnarray*}
        Summing up the inequalities from $k=0, ..., K-1$, we get 
        \begin{equation*}
            \frac{1}{2}\sum_{k=0}^{K-1}w_kr^k \leq w_{-1}\delta^0 - w_{K-1}\delta^K + \frac{\lambda_{\mD}\Delta^{\inf}}{n}\sum_{k=0}^{K-1}w^k.
        \end{equation*}
        Define $W_K = \sum_{k=0}^{K-1} w_k$, and divide both sides by $W_K$, we get
        \begin{eqnarray*}
            \frac{1}{2}\min_{0\leq k\leq K-1}r^k \leq \frac{1}{2}\frac{\sum_{k=0}^{K-1}w_kr^k}{W_K}r^k \leq \frac{w_{-1}}{W_K}\delta^0 + \frac{\lambda_{\mD}\Delta^{\inf}}{n}.
        \end{eqnarray*}
        Notice that from the definition of $w_k$, we know that the following inequality holds,
        \begin{equation*}
            \frac{w_{-1}}{W_K} \leq \frac{w_{-1}}{Kw_{K-1}} = \frac{(1+\frac{\lambda_{\mD}}{n})^K}{K}.
        \end{equation*}
        As a result, we have 
        \begin{equation*}
            \min_{0\leq k\leq K-1}r^k \leq \frac{2\left(1 + \frac{\lambda_{\mD}}{n}\right)^K}{K}\delta^0 + \frac{2\lambda_{\mD}\Delta^{\inf}}{n}.
        \end{equation*}
        Recalling the definition for $r^k$ and $\delta^k$, we get the following result,
        \begin{equation*}
            \min_{0\leq k\leq K-1}\Exp{\norm{\nabla f(x^k)}_{\mD}^2} \leq \frac{2(1 + \frac{\lambda_{\mD}}{n})^K\left(f(x^0) - f^{\inf}\right)}{K} + \frac{2\lambda_{\mD}\Delta^{\inf}}{n}.
        \end{equation*}
        Finally, we apply determinant normalization and get
        \begin{equation}
            \label{eq:TBD}
            \min_{0\leq k\leq K-1}\Exp{\norm{\nabla f(x^k)}_{\mD/\det(\mD)^{1/d}}^2} \leq \frac{2(1 + \frac{\lambda_{\mD}}{n})^K\left(f(x^0) - f^{\inf}\right)}{\det(\mD)^{1/d}K} + \frac{2\lambda_{\mD}\Delta^{\inf}}{\det(\mD)^{1/d}n}.
        \end{equation}
        This concludes the proof.

    \subsection{Convexity of the constraints}

    \begin{proposition}
        \label{prop-dist-convex}
        The set of matrices $\mD$ that satisfy \eqref{eq:cond-dist-1} is convex.
    \end{proposition}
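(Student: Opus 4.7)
The plan is to argue that the feasible set defined by \eqref{eq:cond-dist-1} is the intersection of three convex subsets of $\bbS^d_{++}$, one per constraint, and therefore is convex.

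First, I would handle the operator inequality $\mD\mL\mD \preceq \mD$ by a Schur complement: for $\mD\in \bbS^d_{++}$, it is equivalent to the linear matrix inequality
\begin{equation*}
\begin{pmatrix} \mD & \mD \\ \mD & \mL^{-1} \end{pmatrix} \succeq 0,
\end{equation*}
which is affine in $\mD$ and hence defines a convex set. Next, for the constraint $K \geq 12(f(x^0)-f^{\inf})/(\det(\mD)^{1/d}\varepsilon^2)$, I would rewrite it as $\det(\mD)^{1/d}\geq c$ for a positive constant $c$ and invoke the standard concavity of $\mD\mapsto \det(\mD)^{1/d}$ on $\bbS^d_{++}$; its superlevel set is then convex.

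The main obstacle is the constraint $\lambda_{\mD} \leq \min\{n/K,\,(n\varepsilon^2/(4\Delta^{\inf}))\det(\mD)^{1/d}\}$. I would first establish that the scalar map $\mD\mapsto \lambda_{\mD}$ is convex by chaining four observations. (i) $\mD\mapsto \mD\mL\mD$ is matrix convex on $\bbS^d_{++}$, which follows from the algebraic identity
\begin{equation*}
(t\mD_1+(1-t)\mD_2)\mL(t\mD_1+(1-t)\mD_2) - t\mD_1\mL\mD_1 - (1-t)\mD_2\mL\mD_2 = -t(1-t)(\mD_1-\mD_2)\mL(\mD_1-\mD_2) \preceq 0.
\end{equation*}
(ii) For each realization of $\mS_i^k$, the conjugation map $\mA\mapsto \mL_i^{1/2}(\mS_i^k-\mI_d)\mA(\mS_i^k-\mI_d)\mL_i^{1/2}$ is positive linear and hence preserves the matrix-convex inequality in (i); expectation preserves it further. (iii) $\lambda_{\max}$ is both convex on symmetric matrices and monotone non-decreasing with respect to the Loewner order, so composing it with the matrix-convex function from (ii) yields a convex scalar function of $\mD$. (iv) Taking a finite maximum over $i\in [n]$ preserves convexity.

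With convexity of $\mD\mapsto \lambda_{\mD}$ in hand, $\{\mD:\lambda_{\mD}\leq n/K\}$ is a sublevel set of a convex function, and $\{\mD : \lambda_{\mD} - (n\varepsilon^2/(4\Delta^{\inf}))\det(\mD)^{1/d} \leq 0\}$ is also the sublevel set of a convex function (convex plus the negative of a concave function). Intersecting these with the two convex sets above closes the argument. I expect step (iii) to require the most care, because one must combine monotonicity and convexity of $\lambda_{\max}$ to pull it through a matrix-convex operator inequality; convexity of $\lambda_{\max}$ alone is insufficient.
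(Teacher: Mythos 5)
Your proof is correct, but it reaches the conclusion by a genuinely different route than the paper. For the constraint $\mD\mL\mD\preceq\mD$, the paper simply notes the equivalence with $\mD\preceq\mL^{-1}$ (a linear matrix inequality), whereas you use a Schur complement to get an equivalent LMI in block form; both are valid, and the paper's version is slightly more economical. For the $\lambda_{\mD}$ constraints, the paper never establishes convexity of the scalar map $\mD\mapsto\lambda_{\mD}$: instead it converts each bound $\lambda_{\max}(\cdot)\leq c$ into a Loewner inequality, scalarizes it via quadratic forms $u^\top(\cdot)u$ for every fixed $u\neq 0$, applies a trace identity to isolate $g_u(\mD)=\tr\bigl(\mL^{1/2}\mD\,\mR_u\,\mD\mL^{1/2}\bigr)$ with $\mR_u=\Exp{(\mS_i^k-\mI_d)uu^\top(\mS_i^k-\mI_d)}\in\bbS^d_{+}$, proves convexity of $g_u$ by direct expansion (its Lemma on $\tr(\mL^{1/2}\mD\mR\mD\mL^{1/2})$), and then intersects the resulting convex sublevel sets over all $u$ and all clients $i$. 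You instead prove the stronger statement that $\lambda_{\mD}$ itself is convex, by composing the matrix convexity of $\mD\mapsto\mD\mL\mD$ with a positive linear (conjugation plus expectation) map and then with $\lambda_{\max}$, correctly using \emph{both} the monotonicity of $\lambda_{\max}$ in the Loewner order and its convexity --- that is indeed the step where care is needed, and you flagged it. Note that the algebraic core is shared: the identity producing the residual $-t(1-t)(\mD_1-\mD_2)\mL(\mD_1-\mD_2)\preceq 0$ in your step (i) is exactly the computation inside the paper's trace lemma, just used at the operator level rather than after taking traces against $\mR_u$. Your packaging is more modular and avoids the intersection-over-all-$u$ argument; the paper's is more elementary in that it never invokes matrix convexity or the composition rule for $\lambda_{\max}$. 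Finally, you explicitly handle the constraint $K\geq 12(f(x^0)-f^{\inf})/(\det(\mD)^{1/d}\varepsilon^2)$ as a superlevel set of the concave function $\det(\mD)^{1/d}$, a point the paper's proof passes over in silence, so your treatment is in this respect more complete.
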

    
    \begin{proof}
        The first inequality in \eqref{eq:cond-dist-1} can be reformulated into 
        \begin{eqnarray*}
            \mD \preceq \mL^{-1},
        \end{eqnarray*}
        which is linear in $\mD$ and, therefore, is  convex. For the second constraint in \eqref{eq:cond-dist-1}, 
        \begin{eqnarray}
            \label{eq:second-cond}
            \max_i \left\{\lambda_{\max}\left(\Exp{\mL_i^\frac{1}{2}\left(\mS^k_i - \mI_d\right)\mD\mL\mD\left(\mS^k_i - \mI_d\right)\mL_i^\frac{1}{2}}\right)\right\} \leq \frac{n}{K},
        \end{eqnarray}
        we can reformulate it into $n$ constraints, one for each client $i$:
        \begin{align*}
             \lambda_{\max}&\left(\Exp{\mL_i^\frac{1}{2}\left(\mS^k_i - \mI_d\right)\mD\mL\mD\left(\mS^k_i - \mI_d\right)\mL_i^\frac{1}{2}}\right) \leq \frac{n}{K}, \quad \forall i \in [\nlay]\\
            &\Leftrightarrow \Exp{\mL_i^\frac{1}{2}\left(\mS^k_i - \mI_d\right)\mD\mL\mD\left(\mS^k_i - \mI_d\right)\mL_i^\frac{1}{2}} \preceq \frac{n}{K}\mI_d, \quad \forall i \in [\nlay]\\
            &\Leftrightarrow \mL_i^\frac{1}{2}\Exp{\left(\mS^k_i - \mI_d\right)\mD\mL\mD\left(\mS^k_i - \mI_d\right)}\mL_i^\frac{1}{2} \preceq \frac{n}{K}\mI_d, \quad \forall i \in [\nlay] \\
            &\Leftrightarrow \Exp{\left(\mS^k_i - \mI_d\right)\mD\mL\mD\left(\mS^k_i - \mI_d\right)} \preceq \frac{n}{K}\mL_i^{-1}, \quad \forall i \in [\nlay]. \\
        \end{align*}
        We then look at the individual condition for one client $i$, 
        \begin{eqnarray}
            \label{eq:individual-cond}
            \Exp{\left(\mS^k_i - \mI_d\right)\mD\mL\mD\left(\mS^k_i - \mI_d\right)} \preceq \frac{n}{K}\mL_i^{-1},
        \end{eqnarray}
        that is for any vector  $u \in \R^d$, we require 
        \begin{align*}
            u^{\top}&\Exp{\left(\mS^k_i - \mI_d\right)\mD\mL\mD\left(\mS^k_i - \mI_d\right)}u \leq \frac{n}{K}u^{\top}\mL_i^{-1}u, \\
            &\Leftrightarrow \tr\left(u^{\top}\Exp{\left(\mS^k_i - \mI_d\right)\mD\mL\mD\left(\mS^k_i - \mI_d\right)}u\right) \leq \frac{n}{K}\tr(u^{\top}\mL_i^{-1}u),  \\
            &\Leftrightarrow\Exp{\tr(u^{\top}\left(\mS^k_i - \mI_d\right)\mD\mL\mD\left(\mS^k_i - \mI_d\right)u)} \leq \tr(u^{\top}\mL_i^{-1}u),   \\
            &\Leftrightarrow\tr(\mL^\frac{1}{2}\mD\Exp{\left(\mS^k_i - \mI_d\right)uu^{\top}\left(\mS^k_i - \mI_d\right)}\mD\mL^\frac{1}{2}) \leq \tr(u^{\top}\mL_i^{-1}u). \\
        \end{align*} 
        We now define function $g_u: \bbS^d_{++}\rightarrow \R$ for every fixed $u\neq 0$,
        \begin{equation}
            \label{eq:def-gD}
            g_u(\mD) = \tr(\mL^\frac{1}{2}\mD\Exp{\left(\mS^k_i - \mI_d\right)uu^{\top}\left(\mS^k_i - \mI_d\right)}\mD\mL^\frac{1}{2}),
        \end{equation}
        notice that $uu^{\top}$ is a rank-$1$ matrix that is positive semi-definite, so for every $y \in \R^d$, 
        \begin{eqnarray*}
            \left(\left(\mS^k_i - \mI_d\right)y\right)^{\top}uu^{\top}\left(\left(\mS^k_i - \mI_d\right)y\right) \geq 0,
        \end{eqnarray*}
        which means that $\left(\mS^k_i - \mI_d\right)uu^{\top}\left(\mS^k_i - \mI_d\right) \in \bbS^d_{+}$ , and thus $\mR := \Exp{\left(\mS^k_i - \mI_d\right)uu^{\top}\left(\mS^k_i - \mI_d\right)} \in \bbS^d_{+}$ as well. 
        Using \Cref{lemma:convexity-func}, we know that $g_u(\mD)$ is a convex function for every $0 \neq u \in \R^d$, thus its sub-level set $\{\mD \in \bbS^d_{++} \mid g_u(\mD) \leq \tr(u^{\top}\mL_i^{-1}u)\}$ is a convex set. 
        The intersection of those convex sets corresponding to the individual constraint \eqref{eq:individual-cond} of client $i$ is convex. 
        Again the intersection of those convex sets for each client $i$, which corresponds to \eqref{eq:second-cond}, is still convex.

        For the third constraint in \eqref{eq:cond-dist-1}, we can transform it using similar steps as we obtain \eqref{eq:second-cond} into 
        \begin{eqnarray}
            \label{eq:cond-3-pr}
            \Exp{\left(\mS_i^k - \mI_d\right)\mD\mL\mD\left(\mS_i^k - \mI_d\right)} \preceq \frac{n\varepsilon^2}{4\Delta^{\inf}}\det(\mD)^{1/d}\mL_i^{-1}, \quad \forall i.
        \end{eqnarray}
        If we look at each individual constraint, we can write in quadratic forms for any $0 \neq u \in \R^d$,
        \begin{eqnarray*}
            u^{\top}\Exp{\left(\mS_i^k - \mI_d\right)\mD\mL\mD\left(\mS_i^k - \mI_d\right)}u \leq \frac{n\varepsilon^2}{4\Delta^{\inf}}\det(\mD)^{1/d}\cdot u^{\top}\mL_i^{-1}u, \quad \forall u \neq 0.
        \end{eqnarray*}
        Using the linearity of expectation and the trace operator with the trace trick, we can transform the above condition into,
        \begin{eqnarray*}
            \tr(\mL^\frac{1}{2}\mD\Exp{\left(\mS^k_i - \mI_d\right)uu^{\top}\left(\mS^k_i - \mI_d\right)}\mD\mL^\frac{1}{2}) \leq \frac{n\varepsilon^2}{4\Delta^{\inf}}\det(\mD)^\frac{1}{d}\tr(u^{\top}\mL_i^{-1}u) \quad \forall u \neq 0.
        \end{eqnarray*}
        notice that we have already shown that $\mR = \Exp{\left(\mS^k_i - \mI_d\right)uu^{\top}\left(\mS^k_i - \mI_d\right)} \in \bbS^d_{+}$. Thus if we apply \Cref{lemma:convexity-func}, we know that the left-hand side of the previous inequality is convex w.r.t. $\mD$. 
        On the other hand we know that $\det(\mD)^\frac{1}{d}$ is a concave function for symmetric positive definite matrices $\mD$. 
        So the set of $\mD$ satisfying the constraint here for every $u \in \R^d$ is convex, thus their intersection is convex as well. 
        Which means that the set of $\mD$ satisfying the constraint for each client $i$ is convex. 
        Thus the intersection of those convex sets corresponding to different clients, which corresponds to \eqref{eq:cond-3-pr}, is still convex. 
        Now we know that the set of $\mD$ satisfying each of the three constraints in \eqref{eq:cond-dist-1} is convex, thus the intersection of them is convex as well. This concludes the proof. 
    \end{proof}

    \subsubsection{Proof of \Cref{cor:dist-cond-conv}}
       
        For the first term in the RHS of convergence bound \eqref{eq:thm-dist1} under condition \eqref{eq:cond-dist-1}, we know that 
        \begin{eqnarray*}
            2\brr{1 + \frac{\lambda_{\mD}}{n}}^K \leq 2\cdot\exp\brr{\lambda_{\mD}\cdot\frac{K}{n}} \leq 2\cdot\exp(1) \leq 6,
        \end{eqnarray*}
        thus 
        \begin{eqnarray*}
            \frac{2(1 + \frac{\lambda_{\mD}}{n})^K\left(f(x^0) - f^{\inf}\right)}{\det(\mD)^{1/d}\, K} &\leq& \frac{6\left(f(x^0) - f^{\inf}\right)}{\det(\mD)^{1/d}\, K}\\
            &\leq& \frac{6\left(f(x^0) - f^{\inf}\right)}{\det(\mD)^{1/d}} \cdot \frac{\varepsilon^2\det(\mD)^\frac{1}{d}}{12\left(f(x^0) - f^{\inf}\right)}\\
            &=& \frac{\varepsilon^2}{2}.
        \end{eqnarray*}
        While for the second term of RHS in \eqref{eq:thm-dist1}, we have 
        \begin{eqnarray*}
            \frac{2\lambda_{\mD} \Delta^{\inf}}{\det(\mD)^{1/d} \, n} \leq \frac{2\Delta^{\inf}}{\det(\mD)^{1/d}\, n} \cdot \frac{\varepsilon^2(\det(\mD))^{1/d}\, n}{4\Delta^{\inf}} 
            \leq \frac{\varepsilon^2}{2}.
        \end{eqnarray*}
        Thus we know that the left hand side of \eqref{eq:thm-dist1} is upper bounded by
        \begin{eqnarray*}
            \min_{0\leq k\leq K-1}\Exp{\norm{\nabla f(x^k)}_{\frac{\mD}{\det(\mD)^{1/d}}}^2} &\leq& \frac{\varepsilon^2}{2} + \frac{\varepsilon^2}{2} = \varepsilon^2.
        \end{eqnarray*}
        This concludes the proof.

    \subsection{Distributed \ref{eq:alg2}}
        We also extend \ref{eq:alg2} to the distributed case. Consider the method 
        \begin{equation}
            \label{eq:alg2-dis}
            x^{k+1} = x^k - \frac{1}{n}\sum_{i=1}^{n}\mT_i^k\mD\nabla f_i(x^k),
        \end{equation}
        where $\mD \in \bbS^d_{++}$ is the stepsize matrix, and each $\mT^k_i$ is a sequence of sketch matrices drawn randomly from some distribution $\cT$ over $\bbS^d_{+}$ independent of each other, satisfying
        \begin{equation}
            \Exp{\mT_i^k} = \mI_d.
            \label{eq:T-unbiased}
        \end{equation}

\subsubsection{Analysis of distributed \ref{eq:alg2}}
    In this section, we present the theory for \Cref{alg:dist-alg2}, which is an analogous to what we have seen for \Cref{alg:dist-alg1}. 
    We first present the following lemma which is necessary for our analysis.
    \begin{lemma}
        \label{lemma:7}
        For any sketch $\mT^k_i$ of client $i$ drawn randomly from some distribution $\cT$ over $\bbS^d_{+}$ which satisfies
        \begin{equation*}
            \Exp{\mT^k_i} = \mI_d,
        \end{equation*}
        the following inequality holds for any $x \in \R^d$ for each client $i$,
        \begin{equation}
            \label{eq:lemma-7}
            \Exp{\norm{\mT^k_i\mD x - \mD x}^2_{\mL}} \leq \lambda_{\max}\left(\mL_i^\frac{1}{2}\mD\Exp{\left(\mT^k_i - \mI_d\right)\mL\left(\mT^k_i - \mI_d\right)}\mD\mL_i^\frac{1}{2}\right)\cdot\norm{x}^2_{\mL_i^{-1}}.
        \end{equation}
    \end{lemma}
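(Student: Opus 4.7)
The plan is to reduce the claim to a Rayleigh quotient bound, in the same spirit as Lemma 5 (of which this is essentially the mirror image: here the stepsize matrix $\mD$ sits \emph{inside} the quadratic form rather than the sketch). Because $\mT_i^k$ is symmetric by \Cref{ass:sketch}, so is $\mT_i^k - \mI_d$, which lets us expand the weighted norm cleanly.

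First I would write
\[
\Exp{\norm{\mT^k_i \mD x - \mD x}^2_{\mL}}
= \Exp{(\mD x)^\top (\mT^k_i - \mI_d) \mL (\mT^k_i - \mI_d) (\mD x)}
= x^\top \mD\, \mM_i\, \mD\, x,
\]
where I define
\[
\mM_i \eqdef \Exp{(\mT^k_i - \mI_d) \mL (\mT^k_i - \mI_d)} \in \bbS^d_{+},
\]
using linearity of expectation to pull the deterministic vector $\mD x$ out of the expectation. Note that $\mM_i$ is positive semi-definite because the map $\mN \mapsto \mN \mL \mN$ sends symmetric matrices to positive semi-definite ones (since $\mL \succeq \mO_d$).

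Next I would perform the change of variables $y \eqdef \mL_i^{-1/2} x$, so that $x = \mL_i^{1/2} y$. Substituting, the left-hand side becomes
\[
x^\top \mD \mM_i \mD x
= y^\top \bigl(\mL_i^{1/2} \mD \mM_i \mD \mL_i^{1/2}\bigr) y,
\]
while simultaneously $\norm{x}^2_{\mL_i^{-1}} = x^\top \mL_i^{-1} x = y^\top y = \norm{y}^2$. Because $\mL_i^{1/2} \mD \mM_i \mD \mL_i^{1/2}$ is symmetric positive semi-definite, the Rayleigh quotient inequality yields
\[
y^\top \bigl(\mL_i^{1/2} \mD \mM_i \mD \mL_i^{1/2}\bigr) y
\leq \lambda_{\max}\bigl(\mL_i^{1/2} \mD \mM_i \mD \mL_i^{1/2}\bigr) \norm{y}^2,
\]
which, after unfolding $\mM_i$ and $\norm{y}^2 = \norm{x}^2_{\mL_i^{-1}}$, is exactly \eqref{eq:lemma-7}.

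I do not anticipate any real obstacle: the only subtlety worth double-checking is that $\mT_i^k - \mI_d$ remains symmetric (so that the quadratic-form expansion does not need a transpose), which holds because $\mT_i^k \in \bbS^d_{+}$ and $\mI_d$ is symmetric. The invertibility of $\mL_i$ needed for the change of variables follows from $\mL_i \in \bbS^d_{++}$ in \Cref{ass:lower-bounded-fi}. No integrability or exchange-of-limits issue arises since the quadratic form in $\mT_i^k$ is a bounded linear functional of $\mT_i^k \otimes \mT_i^k$, so the expectation commutes with the quadratic form componentwise.
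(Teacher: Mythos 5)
Your proposal is correct and follows essentially the same route as the paper's proof: expand the weighted norm into the quadratic form $x^\top \mD\,\Exp{(\mT^k_i-\mI_d)\mL(\mT^k_i-\mI_d)}\,\mD x$, conjugate by $\mL_i^{\pm 1/2}$ (the paper inserts $\mL_i^{-1/2}\mL_i^{1/2}$ on both sides where you perform the equivalent change of variables $y = \mL_i^{-1/2}x$), and conclude with the Rayleigh quotient bound. The extra remarks on symmetry of $\mT^k_i - \mI_d$ and positive semi-definiteness of the expected matrix are sound and only make explicit what the paper leaves implicit.
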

    
    \begin{theorem}
        \label{thm:conv-2}
        Let $f_i:\R^d\rightarrow\R$ satisfy \Cref{ass:lower-bounded-fi} and let $f$ satisfy Assumptions \ref{ass:finf} and \ref{ass:matrix_L} with a smoothness matrix $\mL$. 
        If the stepsize satisfies,
        \begin{equation}
            \label{eq:ineq-upperbound-D-alg2}
            \mD\mL\mD \preceq \mD,
        \end{equation}
        then the following convergence bound is true for the iteration of \Cref{alg:dist-alg2}
        \begin{equation}
            \label{eq:thm-conv-2}
            \min_{0\leq k\leq K-1}\Exp{\norm{\nabla f(x^k)}_{\frac{\mD}{\det(\mD)^{1/d}}}^2} \leq \frac{2(1 + \frac{\lambda_{\mD}^{\prime}}{n})^K\left(f(x^0) - f^{\inf}\right)}{\det(\mD)^{1/d}\, K} + \frac{2\lambda_{\mD}^{\prime}\Delta^{\inf}}{\det(\mD)^{1/d}\, n},
        \end{equation}
        where $\Delta^{\inf} := f^{\inf} - \frac{1}{n}\sum_{i=1}^{n}f_i^{\inf}$ and
        \begin{eqnarray*}
            \lambda_{\mD}^{\prime} &:=& \max_i \left\{\lambda_{\max}\left(\Exp{\mL_i^\frac{1}{2}\mD\left(\mT^k_i - \mI_d\right)\mL\left(\mT^k_i - \mI_d\right)\mD\mL_i^\frac{1}{2}}\right)\right\}. \\
        \end{eqnarray*}
    \end{theorem}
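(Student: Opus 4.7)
The plan is to mirror the proof of \Cref{thm:dist-alg1} line-by-line, with the single structural twist that in \ref{eq:alg2} the stepsize $\mD$ now sits to the right of the sketch $\mT_i^k$ rather than to its left. First I would introduce the aggregated estimator $g(x^k) := \frac{1}{n}\sum_{i=1}^n \mT_i^k\mD\nabla f_i(x^k)$, so that the update reads $x^{k+1}=x^k-g(x^k)$, and use the unbiasedness of $\mT_i^k$ to obtain $\Exp{g(x^k)\mid x^k}=\mD\nabla f(x^k)$.

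Next I would apply \Cref{ass:matrix_L} to $f(x^{k+1})$ against $f(x^k)$, take the conditional expectation on $x^k$, and arrive at
\[
\Exp{f(x^{k+1})\mid x^k}\leq f(x^k)-\norm{\nabla f(x^k)}_{\mD}^2+\tfrac{1}{2}\Exp{\norm{g(x^k)}_{\mL}^2\mid x^k}.
\]
To process the quadratic term, I would invoke variance decomposition (\Cref{lemma:var-decomp}) with weight $\mL$ to split it into a variance piece plus $\norm{\mD\nabla f(x^k)}_{\mL}^2=\inner{\mD\mL\mD\nabla f(x^k)}{\nabla f(x^k)}$; the stepsize condition \eqref{eq:ineq-upperbound-D-alg2} then bounds this mean-squared piece by $\norm{\nabla f(x^k)}_{\mD}^2$, leaving $-\tfrac{1}{2}\norm{\nabla f(x^k)}_{\mD}^2$ as the net descent term, exactly as in the proof of \Cref{thm:dist-alg1}.

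For the remaining variance, client independence plus centering allow \Cref{lemma:var-sep} to decompose it into $\frac{1}{n^2}\sum_{i=1}^n\Exp{\norm{\mT_i^k\mD\nabla f_i(x^k)-\mD\nabla f_i(x^k)}_{\mL}^2\mid x^k}$. This is the point where the analysis diverges from the \ref{eq:alg1} proof: instead of \Cref{lemma:5} I would apply \Cref{lemma:7}, whose bound sandwiches $\mD$ on the outside of the sketch-variance matrix and therefore produces exactly the spectral quantity defining $\lambda_{\mD}^{\prime}$. Using \Cref{lemma:imp-smt-upd} to replace each $\norm{\nabla f_i(x^k)}_{\mL_i^{-1}}^2$ by $2(f_i(x^k)-f_i^{\inf})$ and maximizing over clients yields the one-step recursion
\[
\Exp{f(x^{k+1})-f^{\inf}}\leq \left(1+\tfrac{\lambda_{\mD}^{\prime}}{n}\right)\Exp{f(x^k)-f^{\inf}} - \tfrac{1}{2}\Exp{\norm{\nabla f(x^k)}_{\mD}^2} + \tfrac{\lambda_{\mD}^{\prime}\Delta^{\inf}}{n},
\]
which has the same shape as \eqref{eq:recursion-dist}. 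From here the argument copies verbatim: introduce the exponentially decaying weights $w_k=w_{k-1}/(1+\lambda_{\mD}^{\prime}/n)$ following \cite{stich2019unified,khaled2020better}, telescope over $k=0,\ldots,K-1$, divide by $W_K=\sum_{k=0}^{K-1}w_k$, bound the prefactor by $w_{-1}/W_K\leq (1+\lambda_{\mD}^{\prime}/n)^K/K$, and finally normalize both sides by $\det(\mD)^{1/d}$ to obtain \eqref{eq:thm-conv-2}.

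The only genuinely new step compared with \Cref{thm:dist-alg1} is \Cref{lemma:7} itself, whose form differs from \Cref{lemma:5} precisely because $\mD$ and $\mT_i^k$ no longer commute with the centering operation. I expect this to be the main obstacle: one must first compute $\Exp{\norm{\mT_i^k\mD x-\mD x}_{\mL}^2}=\norm{\mD x}^2_{\Exp{(\mT_i^k-\mI_d)\mL(\mT_i^k-\mI_d)}}$ via the analog of \Cref{lemma:property-of-sketch}, and then pass to a Rayleigh quotient through the substitution $y=\mL_i^{1/2}x$ to extract $\lambda_{\max}\brr{\mL_i^{1/2}\mD\,\Exp{(\mT_i^k-\mI_d)\mL(\mT_i^k-\mI_d)}\mD\mL_i^{1/2}}$ while paying the price $\norm{x}_{\mL_i^{-1}}^2$. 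Once this lemma is secured, the rest is bookkeeping that tracks the \ref{eq:alg1} argument with only the replacement $\lambda_{\mD}\mapsto\lambda_{\mD}^{\prime}$.
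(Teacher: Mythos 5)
Your proposal is correct and follows essentially the same route as the paper's own proof: smoothness plus unbiasedness, variance decomposition of $\Exp{\norm{g(x^k)}_{\mL}^2\mid x^k}$, the condition $\mD\mL\mD\preceq\mD$ to absorb the mean term, \Cref{lemma:var-sep} and \Cref{lemma:7} to reach $\lambda_{\mD}^{\prime}$, \Cref{lemma:imp-smt-upd}, and then the weighted telescoping of \cite{khaled2020better} followed by determinant normalization. The only cosmetic slip is in your sketch of \Cref{lemma:7}: the Rayleigh-quotient substitution should be $y=\mL_i^{-1/2}x$ rather than $y=\mL_i^{1/2}x$ in order to pay the price $\norm{x}_{\mL_i^{-1}}^2$, but the bound you state is the correct one.
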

    \begin{proof}
        We first define function $g(x)$ as follows,
        \begin{equation*}
            g(x) = \frac{1}{n}\sum_{i=1}^{n}\mT_i^k\mD\nabla f_i(x^k).
        \end{equation*}
        As a result, \Cref{alg:dist-alg2} can be written as
        \begin{eqnarray*}
            x^{k+1} = x^k - g(x^k).
        \end{eqnarray*}
        Notice that 
        \begin{equation}
            \label{eq:unb-g-2}
            \Exp{g(x)} = \frac{1}{n}\sum_{i=1}^{n}\Exp{\mT_i^k}\mD\nabla f_i(x) = \mD\nabla f(x).
        \end{equation}
        We then start with the $\mL$ matrix smoothness of function $f$,
        \begin{eqnarray*}
            f(x^{k+1}) &\leq& f(x^k) + \inner{\nabla f(x^k)}{x^{k+1} - x^k} + \frac{1}{2}\inner{\mL(x^{k+1} - x^k)}{x^{k+1} - x^k} \\
            &=& f(x^k) + \inner{\nabla f(x^k)}{-g(x^k)} + \frac{1}{2}\inner{\mL\left(-g(x^k)\right)}{-g(x^k)} \\
            &=& f(x^k) - \inner{\nabla f(x^k)}{g(x^k)} + \frac{1}{2}\inner{\mL g(x^k)}{g(x^k)}. \\
        \end{eqnarray*}
        We then take expectation conditioned on $x^k$,
        \begin{align}
            \label{eq:proof-temp1}
            \Exp{f(x^{k+1}) \mid x^k} 
            &\leq f(x^k) - \inner{\nabla f(x^k)}{\Exp{g(x^k)\mid x^k}} + \frac{1}{2}\Exp{\inner{\mL g(x^k)}{g(x^k)}\mid x^k} \notag\\
            &= f(x^k) - \inner{\nabla f(x^k)}{\mD\nabla f(x^k)} + \frac{1}{2}\underbrace{\Exp{\inner{\mL g(x^k)}{g(x^k)}\mid x^k}}_{:= T}.
        \end{align}
        \Cref{lemma:var-decomp} yields
        \begin{eqnarray*}
            T &=& \Exp{\norm{g(x^k)}_{\mL}^2 \mid x^k} \\
            &\overset{\eqref{eq:d-norm-var-decomp}}{=}& \Exp{\norm{g(x^k) - \Exp{g(x^k) \mid x^k}^2_{\mL}}\mid x^k} + \norm{\Exp{g(x)\mid x^k}}^2_{\mL}.
        \end{eqnarray*}
        From \eqref{eq:unb-g-2} we deduce
        \begin{eqnarray*}
            T 
            &{=}& \Exp{\norm{g(x^k) - \mD\nabla f(x^k)}_{\mL}^2 \mid x^k} + \norm{\mD \nabla f(x^k)}_{\mL}^2 \\
            &=&\Exp{\norm{\frac{1}{n}\sum_{i=1}^n\mT_i^k\mD\nabla f_i(x^k) - \frac{\mD}{n}\sum_{i=1}^{n}\nabla f_i(x^k)}_{\mL}^2 \mid x^k} + \norm{\nabla f(x^k)}_{\mD\mL\mD}^2 \\
            &=&\Exp{\norm{\frac{1}{n}\sum_{i=1}\left(\mT_i^k\mD - \mD\right)\nabla f_i(x^k)}_{\mL}^2 \mid x^k} + \norm{\nabla f(x^k)}_{\mD\mL\mD}^2 .
        \end{eqnarray*}
        Recalling \Cref{lemma:var-sep} we obtain
        \begin{eqnarray*}
            T 
            & \overset{\eqref{eq:var-sep}}{=}& \frac{1}{n^2}\sum_{i=1}^{n}\Exp{\norm{\mT_i^k\mD\nabla f_i(x^k) - \mD\nabla f_i(x^k)}_{\mL}^2\mid x^k} + \norm{\nabla f(x^k)}_{\mD\mL\mD}^2 \\
            &\overset{\eqref{eq:ineq-upperbound-D-alg2}}{\leq }&\frac{1}{n^2}\sum_{i=1}^{n}\Exp{\norm{\mT_i^k\mD\nabla f_i(x^k) - \mD\nabla f_i(x^k)}_{\mL}^2 \mid x^k} + \norm{\nabla f(x^k)}_{\mD}^2.
        \end{eqnarray*}
        By applying \Cref{lemma:7}, we get 
        \begin{eqnarray*}
            T&{\leq}& \frac{1}{n^2}\sum_{i=1}^{n}\lambda_{\max}\left(\mL_i^\frac{1}{2}\mD\Exp{(\mT^k_i-\mI_d)\mL(\mT^k_i-\mI_d)}\mD\mL_i^\frac{1}{2}\right)\norm{\nabla f_i(x^k)}^2_{\mL_i^{-1}} + \norm{\nabla f(x^k)}_{\mD}^2\\
            &\overset{\eqref{eq:imp-ineq-smooth}}{\leq}& \lambda_{\mD}^{\prime}\cdot\frac{2}{n}\left(f(x^k) - \frac{1}{n}\sum_{i=1}^{n}f_i^{\inf}\right) + \norm{\nabla f(x^k)}_{\mD}^2.
        \end{eqnarray*}
        Then we plug the upper bound of $T$ back into \eqref{eq:proof-temp1}, we get 
        \begin{eqnarray*}
            && \Exp{f(x^{k+1}) \mid x^k} \\
            &\leq& f(x^k) - \frac{1}{2}\norm{\nabla f(x^k)}_{\mD}^2 + \frac{\lambda_{\mD}^{\prime}}{n}\left(f(x^k) - f^{\inf}\right) + \frac{\lambda_{\mD}^{\prime}}{n}(f^{\inf} - \frac{1}{n}\sum_{i=1}^{n}f_i^{\inf}).
        \end{eqnarray*}
        Taking expectation, subtracting $f^{\inf}$ from both sides, and using tower property, we get 
        \begin{align*}
            \mathbb{E}\Big[f(x^{k+1}) &- f^{\inf} \Big]\\
            &\leq \Exp{f(x^k) -f^{\inf}} - \frac{1}{2}\Exp{\norm{\nabla f(x^k)}_{\mD}^2} + \frac{\lambda_{\mD}^{\prime}}{n}\Exp{f(x^k) - f^{\inf}} + \frac{\lambda_{\mD}^{\prime}}{n}\Delta^{\inf}.
        \end{align*}
        Then following similar steps as in the proof of \Cref{thm:dist-alg1}, we are able to get
        \begin{equation*}
            \min_{0\leq k\leq K-1}\Exp{\norm{\nabla f(x^k)}_{\frac{\mD}{\det(\mD)^{1/d}}}^2} \leq \frac{2(1 + \frac{\lambda_{\mD}^{\prime}}{n})^K\left(f(x^0) - f^{\inf}\right)}{\det(\mD)^{1/d}\, K} + \frac{2\lambda_{\mD}^{\prime}\Delta^{\inf}}{\det(\mD)^{1/d}\, n}.
        \end{equation*}
        This concludes the proof.
    \end{proof}
    Similar to \Cref{alg:dist-alg1}, we can choose the parameters of the algorithm to avoid the exponential blow-up in convergence bound \eqref{eq:thm-conv-2}. 
    The following corollary sums up the convergence conditions for \Cref{alg:dist-alg2}.
    \begin{corollary}
        \label{cor:dist-cond-conv-2}
        We reach an error level of $\varepsilon^2$ in \eqref{eq:thm-conv-2} if the following conditions are satisfied:
        \begin{equation}
            \label{eq:cond-dist-2}
            \mD\mL\mD \preceq \mD, \quad 
            \lambda_{\mD}^{\prime} \leq \min\brc{\frac{n}{K}, \frac{n\varepsilon^2}{4\Delta^{\inf}}\det(\mD)^{1/d}}, \quad
            K \geq \frac{12(f(x^0) - f^{\inf})}{\det(\mD)^{1/d}\varepsilon^2}.
        \end{equation}
    \end{corollary}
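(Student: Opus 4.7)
The plan is to mirror the proof of \Cref{cor:dist-cond-conv}, since the convergence bound in \Cref{thm:conv-2} has exactly the same structural form as \eqref{eq:thm-dist1}, with $\lambda_{\mD}$ replaced by $\lambda_{\mD}'$. The strategy is to treat the two summands on the right-hand side of \eqref{eq:thm-conv-2} separately and show that under conditions \eqref{eq:cond-dist-2} each one is bounded by $\varepsilon^2/2$.

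First, I would note that the stepsize condition $\mD\mL\mD \preceq \mD$ is precisely the hypothesis needed to invoke \Cref{thm:conv-2}, so the bound \eqref{eq:thm-conv-2} is available. Then, for the exponential prefactor in the first summand, the bound $\lambda_{\mD}' \leq n/K$ from the second condition of \eqref{eq:cond-dist-2} gives
\begin{equation*}
\brr{1 + \frac{\lambda_{\mD}'}{n}}^K \leq \exp\brr{\lambda_{\mD}' \cdot \frac{K}{n}} \leq \exp(1) \leq 3,
\end{equation*}
so the first summand is at most $6(f(x^0)-f^{\inf})/(\det(\mD)^{1/d} K)$. Combining this with the third condition $K \geq 12(f(x^0)-f^{\inf})/(\det(\mD)^{1/d}\varepsilon^2)$ yields an upper bound of $\varepsilon^2/2$ on the first term.

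For the second summand, I would use the other half of the second condition, namely $\lambda_{\mD}' \leq n\varepsilon^2 \det(\mD)^{1/d}/(4\Delta^{\inf})$, which directly gives
\begin{equation*}
\frac{2 \lambda_{\mD}' \Delta^{\inf}}{\det(\mD)^{1/d}\, n} \leq \frac{2\Delta^{\inf}}{\det(\mD)^{1/d}\, n} \cdot \frac{n\varepsilon^2 \det(\mD)^{1/d}}{4\Delta^{\inf}} = \frac{\varepsilon^2}{2}.
\end{equation*}
Summing the two halves gives the desired $\varepsilon^2$ bound. There is no real obstacle here: the entire argument is a verbatim transcription of the proof of \Cref{cor:dist-cond-conv}, since the only difference between the two theorems lies in the precise definition of the constant ($\lambda_{\mD}$ vs.\ $\lambda_{\mD}'$), which does not enter the algebraic manipulations at all. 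The only mild subtlety worth checking is that we implicitly use $\lambda_{\mD}' \geq 0$ (so the exponential bound is monotone) and $\det(\mD)^{1/d} > 0$ (which follows from $\mD \in \bbS_{++}^d$), both of which are immediate from the setup.
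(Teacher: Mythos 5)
Your proposal is correct and matches the paper exactly: the paper itself states that the proof of this corollary is identical to that of \Cref{cor:dist-cond-conv}, and you reproduce that argument faithfully — bounding the exponential prefactor by $2\exp(1)\le 6$ via $\lambda_{\mD}'\le n/K$, then using the condition on $K$ for the first term and the bound $\lambda_{\mD}'\le n\varepsilon^2\det(\mD)^{1/d}/(4\Delta^{\inf})$ for the second, each contributing $\varepsilon^2/2$. Your remark that the precise definition of $\lambda_{\mD}'$ never enters the algebra is exactly the right observation.
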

    The proof of this corollary is exactly the same as for \Cref{cor:dist-cond-conv}.

    \subsubsection{Optimal stepsize}
        In order to minimize the iteration complexity for \Cref{alg:dist-alg2}, the following optimization problem needs to be solved  
        \begin{eqnarray*}
            \min && \log \det (\mD^{-1}) \\
            \text{subject to} && \mD \quad \text{satisfies} \quad \eqref{eq:cond-dist-2}
        \end{eqnarray*}
        Following similar techniques in the proof of \Cref{prop-dist-convex}, we are able to prove that the above optimization problem is still a convex optimization problem. 
        One simple way to find stepsize matrices is to follow the scheme suggested for solving \eqref{prob:log-det}. 
        That is we first fix $\mW \in \bbS^d_{++}$ and we find the optimal $0 < \gamma \in \R$, such that $\mD = \gamma\mW$ satisfies \eqref{eq:cond-dist-2}.
    
      \subsection{DCGD with constant stepsize}

      In this section we describe the convergence result for DCGD from \cite{khaled2020better}. 
      We assume that the component functions $f_i$ satisfy \Cref{ass:lower-bounded-fi} with 
      $\mL_i = L_i \mI_{d}$ and $f$ satisfies Assumption \ref{ass:finf} and \ref{ass:matrix_L} with 
      $\mL = L \mI_d$.
      \cite{khaled2020better} proposed a unified analysis for non-convex optimization algorithms based on a generic upper bound on the second moment of the gradient estimator $g(x^k)$: 
    \begin{equation}\label{eq:abc}
        \Exp{\norm{g(x^k)}^2} \leq 2A \brr{f(x^k) - f^{\inf}} + B \norm{\nabla f(x^k)}^2 + C,
      \end{equation}
      In our case the gradient estimator is defined as follows
      \begin{equation}
        g_{\sf DCGD}(x^k) = \frac{1}{n}\sum_{i=1}^n \mS^k_i \nabla f_i(x^k).
      \end{equation}
      Here each $\mS^k_i$ is the sketch matrix on the $i$-th client at the $k$-th iteration.
      One may check that $g_{\sf DCGD}$ satisfies \eqref{eq:abc} with the following constants:
      \begin{equation}
        A = \frac{\omega L_{\max}}{n}, \quad B = 1, \quad C =  \frac{2\omega L_{\max}}{n}\Delta^{\inf}.
      \end{equation}
      The constant $L_{\max}$ is defined as the maximum of all $L_i$ and $\omega = \lambda_{\max}\brr{\Exp{\left(\mS_i^k\right)^{\top}\mS_i^k}} - 1.$
      Applying Corollary 1 from \cite{khaled2020better}, we deduce the following.
      If 
      \begin{equation}
        \gamma \leq \min\brc{\frac1L, \frac{\sqrt{n}}{\sqrt{\omega L L_{\max} K}}, 
        \frac{n\epsilon^2 }{4L L_{\max} \omega\Delta^{\inf}}} \text{ and } \gamma K \geq \frac{12\left(f(x^0) - f^{\inf}\right)}{\varepsilon^2},
      \end{equation}
      then 
      \begin{equation}
        \min_{k = 0,\ldots,K-1}\Exp{\norm{\nabla f(x^k)}^2} \leq \epsilon^2.
      \end{equation}

  \section{Proofs of technical lemmas}\label{sec:proofs-lemmas}

    \subsection{Proof of \Cref{lemma:convexity-func}}\label{sec:lemma:convexity-func}
    Let us pick any two matrices $\mD_1, \mD_2 \in \bbS_{++}^d$, scalar $\alpha$ satisfying $0 \leq \alpha \leq 1$ and show that the following inequality holds regardless of the choice of $\mR$,
    \begin{equation}
        \label{eq:convex-eq}
        f(\alpha\mD_1 + (1-\alpha)\mD_2) \leq \alpha f(\mD_1)+(1-\alpha) f(\mD_2). 
    \end{equation}
    For the LHS, we have 
    \begin{eqnarray*}
        && f(\alpha\mD_1 + (1-\alpha)\mD_2) \\
        &=& \tr(\mL^{\frac{1}{2}}\left(\alpha\mD_1 + (1-\alpha)\mD_2\right)\mR(\alpha\mD_1 + (1-\alpha)\mD_2)\mL^\frac{1}{2})\\
        &=& \alpha^2\tr(\mL^\frac{1}{2}\mD_1\mR\mD_1\mL^\frac{1}{2}) + (1-\alpha)^2\tr(\mL^\frac{1}{2}\mD_2\mR\mD_2\mL^\frac{1}{2}) \\
        &&\qquad + \alpha(1-\alpha)\tr(\mL^\frac{1}{2}\mD_1\mR\mD_2\mL^\frac{1}{2}) + \alpha(1-\alpha)\tr(\mL^\frac{1}{2}\mD_2\mR\mD_1\mL^\frac{1}{2}).
    \end{eqnarray*}
    and for the RHS, we have
    \begin{eqnarray*}
        \alpha f(\mD_1) + (1-\alpha)f(\mD_2) 
        = \alpha\tr(\mL^\frac{1}{2}\mD_1\mR\mD_1\mL^\frac{1}{2})  + (1-\alpha)\tr(\mL^\frac{1}{2}\mD_2\mR\mD_2\mL^\frac{1}{2}) .
    \end{eqnarray*}
    Thus \eqref{eq:convex-eq} can be simplified to the following inequality after rearranging terms
    \begin{eqnarray*}
        &&\alpha(1-\alpha)\tr(\mL^\frac{1}{2}\mD_1\mR\mD_2\mL^\frac{1}{2}) + \alpha(1-\alpha)\tr(\mL^\frac{1}{2}\mD_2\mR\mD_1\mL^\frac{1}{2}) \\
        &\leq& \alpha(1-\alpha)\tr(\mL^\frac{1}{2}\mD_1\mR\mD_1\mL^\frac{1}{2}) + \alpha(1-\alpha)\tr(\mL^\frac{1}{2}\mD_2\mR\mD_2\mL^\frac{1}{2}).
    \end{eqnarray*}
    This is equivalent to  
    \begin{equation*}
        \tr(\mL^\frac{1}{2}\mD_1\mR\mD_1\mL^\frac{1}{2}) + \tr(\mL^\frac{1}{2}\mD_2\mR\mD_2\mL^\frac{1}{2}) - \tr(\mL^\frac{1}{2}\mD_1\mR\mD_2\mL^\frac{1}{2}) - \tr(\mL^\frac{1}{2}\mD_2\mR\mD_1\mL^\frac{1}{2}) \geq 0.
    \end{equation*}
    To show that the above inequality holds, we do the following transformation for the LHS
    \begin{eqnarray*}
        && \tr(\mL^\frac{1}{2}\mD_1\mR\mD_1\mL^\frac{1}{2}) + \tr(\mL^\frac{1}{2}\mD_2\mR\mD_2\mL^\frac{1}{2}) - \tr(\mL^\frac{1}{2}\mD_1\mR\mD_2\mL^\frac{1}{2}) - \tr(\mL^\frac{1}{2}\mD_2\mR\mD_1\mL^\frac{1}{2}) \\
        &=& \tr(\mL^\frac{1}{2}\mD_1\mR(\mD_1-\mD_2)\mL^\frac{1}{2}) + \tr(\mL^\frac{1}{2}\mD_2\mR(\mD_2-\mD_1)\mL^\frac{1}{2}) \\
        &=& \tr(\mL^\frac{1}{2}(\mD_1-\mD_2)\mR(\mD_1-\mD_2)\mL^\frac{1}{2}).
    \end{eqnarray*}
    Since $\mR \in \bbS^d_{+}$ and $\mD_1 - \mD_2, \mL$ are symmetric, for any vector $u \in \R^d$
    \begin{equation}
        \label{eq:pos-prod-mat}
        u^\top \mL^\frac{1}{2}(\mD_1-\mD_2)\mR(\mD_1-\mD_2)\mL^\frac{1}{2} u
        = \brr{(\mD_1-\mD_2)\mL^\frac{1}{2} u}^\top \mR\left((\mD_1-\mD_2)\mL^\frac{1}{2} u\right) \geq 0.
    \end{equation}
    Thus, $\mL^\frac{1}{2}(\mD_1-\mD_2)\mR(\mD_1-\mD_2)\mL^\frac{1}{2} \in \bbS_{+}^d$, which yields the positivity of its trace.
    Therefore, \eqref{eq:convex-eq} holds, thus $f(D)$ is a convex function. This concludes the proof.

    \subsection{Proof of \Cref{lemma:var-decomp}}
        We have 
        \begin{eqnarray*}
            \Exp{\norm{x - \Exp{x}}_{\mM}^2} &=& \Exp{\inner{x - \Exp{x}}{\mM\left(x - \Exp{x}\right)}} \\
            &=&\Exp{\left(x-\Exp{x}\right)^{\top}\mM\left(x - \Exp{x}\right)} \\
            &=&\Exp{x^{\top}\mM x - \Exp{x}^{\top}\mM x - x^{\top}\mM\Exp{x} + \Exp{x}^{\top}\mM\Exp{x}} \\
            &=&\Exp{x^{\top}\mM x} - 2\Exp{x}^{\top}\mM\Exp{x} + \Exp{x}^{\top}\mM\Exp{x} \\
            &=&\Exp{x^{\top}\mM x} - \Exp{x}^{\top}\mM\Exp{x} \\
            &=& \Exp{\norm{x}_{\mM}^2} - \norm{\Exp{x}}_{\mM}^2, \\
        \end{eqnarray*}
        which concludes the proof.

    \subsection{Proof of \Cref{lemma:var-sep}}
    \begin{proof}
        We have \
        \begin{eqnarray*}
            \Exp{\norm{\frac{1}{n}\sum_{i=1}^{n}a_i}_{\mM}^2} &=& \frac{1}{n^2}\sum_{i=1}^{n}\Exp{\inner{a_i}{\mM a_i}} + \frac{1}{n^2}\sum_{i\neq j}\Exp{\inner{a_i}{\mM a_j}} \\
            &=& \frac{1}{n^2}\sum_{i=1}^{n}\Exp{\norm{a_i}_{\mM}^2} + \frac{1}{n}\sum_{i\neq j}\inner{\Exp{a_i}}{\mM\Exp{a_j}} \\
            &=& \frac{1}{n^2}\sum_{i=1}^{n}\Exp{\norm{a_i}_{\mM}^2}.
        \end{eqnarray*}
        This concludes the proof.
    \end{proof}

    \subsection{Proof of \Cref{lemma:property-of-sketch}}

        Notice that 
        \begin{equation*}
            \Exp{\mS x} = \Exp{\mS}x = x.
        \end{equation*}
        We start with variance decomposition in the matrix norm, 
        \begin{eqnarray*}
            \Exp{\norm{\mS x - x}^2_{\mM}} &\overset{\eqref{eq:d-norm-var-decomp}}{=}& \Exp{\norm{\mS x}^2_{\mM}} - \norm{x}_{\mM}^2 \\
            &=&\Exp{\inner{\mS x}{\mM\mS x}} - \inner{x}{\mM x} \\
            &=& \inner{x}{\Exp{\mS\mM\mS}x} - \inner{x}{\mM x} \\
            &=& \inner{x}{\left(\Exp{\mS\mM\mS} - \mM\right)x}\\
            &=& \norm{x}_{\Exp{\mS\mM\mS} - \mM}^2.
        \end{eqnarray*}
        This concludes the proof.

    \subsection{Proof of \Cref{lemma:imp-smt-upd}}

        We follow the definition of $\mL$ matrix smoothness of function $f$, that for any $x^+, x \in \R^d$, we have 
        \begin{equation*}
            f(x^+) \leq f(x) + \inner{\nabla f(x)}{x^+ - x} + \frac{1}{2}\inner{x^+ - x}{\mL(x^+ - x)}.
        \end{equation*}
        We plug in $x^+ = x - \mL^{-1}\nabla f(x)$, and get 
        \begin{equation*}
            f^{\inf} \leq f(x^+) \leq f(x) - \inner{\nabla f(x)}{\mL^{-1}\nabla f(x)} + \frac{1}{2}\inner{\nabla f(x)}{\mL^{-1}\nabla f(x)}.
        \end{equation*}
        Rearranging terms we get 
        \begin{equation}
            \norm{\nabla f(x)}_{\mL^{-1}}^2 \leq 2\left(f(x) - f^{\inf}\right),
        \end{equation}
        which completes the proof.

    \subsection{Proof of \Cref{lemma:5}}
        \begin{eqnarray*}
            \Exp{\norm{\mS - x}^2_{\mD\mL\mD}} &=& \Exp{\inner{(\mS-\mI_d)x}{\mD\mL\mD(\mS - \mI_d)x}} \\
            &=& \Exp{x^{\top}(\mS-\mI_d)\mD\mL\mD(\mS-\mI_d)x} \\
            &=& x^{\top}\Exp{(\mS-\mI_d)\mD\mL\mD(\mS-\mI_d)}x \\
            &=& x^{\top}\mA^{-\frac{1}{2}}\left(\mA^\frac{1}{2}\Exp{(\mS-\mI_d)\mD\mL\mD(\mS-\mI_d)}\mA^\frac{1}{2}\right)\mA^{-\frac{1}{2}}x \\
            &\leq& \lambda_{\max}\left(\mA^\frac{1}{2}\Exp{(\mS-\mI_d)\mD\mL\mD(\mS-\mI_d)}\mA^\frac{1}{2}\right)\norm{\mA^{-\frac{1}{2}}x}^2 \\
            &=&\lambda_{\max}\left(\mA^\frac{1}{2}\Exp{(\mS-\mI_d)\mD\mL\mD(\mS-\mI_d)}\mA^\frac{1}{2}\right)\norm{x}^2_{\mA^{-1}}.\\
        \end{eqnarray*}
        This completes the proof.

    \subsection{Proof of \Cref{lemma:7}}

        \begin{eqnarray*}
            \Exp{\norm{\mT^k_i\mD x - \mD x}^2_{\mL}} &=& \Exp{\inner{(\mT^k_i-\mI_d)\mD x}{\mL(\mT^k_i - \mI_d)\mD x}} \\
            &=& \Exp{x^{\top}\mD(\mT^k_i-\mI_d)\mL(\mT^k_i-\mI_d)\mD x} \\
            &=& x^{\top}\mD\Exp{(\mT^k_i-\mI_d)\mL(\mT^k_i-\mI_d)}\mD x \\
            &=& x^{\top}\mL_i^{-\frac{1}{2}}\left(\mL_i^\frac{1}{2}\mD\Exp{(\mT^k_i-\mI_d)\mL(\mT^k_i-\mI_d)}\mD\mL_i^\frac{1}{2}\right)\mL_i^{-\frac{1}{2}}x \\
            &\leq& \lambda_{\max}\left(\mL_i^\frac{1}{2}\mD\Exp{(\mT^k_i-\mI_d)\mL(\mT^k_i-\mI_d)}\mD\mL_i^\frac{1}{2}\right)\norm{\mL_i^{-\frac{1}{2}}x}^2 \\
            &=&\lambda_{\max}\left(\mL_i^\frac{1}{2}\mD\Exp{(\mT^k_i-\mI_d)\mL(\mT^k_i-\mI_d)}\mD\mL_i^\frac{1}{2}\right)\norm{x}^2_{\mL_i^{-1}}.\\
        \end{eqnarray*}
        This completes the proof.

  \section{Experiments}

  In this section, we describe the settings and results of numerical experiments to demonstrate the effectiveness of our method. 
  We perform several experiments under single node case and distributed case. The code is available at \url{https://anonymous.4open.science/r/detCGD_Code-A87D/}.

  \subsection{Single node case}
  For the single node case, we study the logistic regression problem with a non-convex regularizer. The objective is given as 
  \begin{eqnarray*}
    f(x) = \frac{1}{n}\sum_{i=1}^{n}\log\left(1 + e^{-b_i\cdot\langle a_i, x\rangle}\right) + \lambda\cdot\sum_{j=1}^{d}\frac{x_j^2}{1 + x^2_j},
  \end{eqnarray*}
  where $x \in \R^d$ is the model, $(a_i, b_i) \in \R^d \times \left\{-1, +1\right\}$ is one data point in the dataset whose size is $n$. 
  The constant $\lambda > 0$ is a tunable hyperparameter associated with the regularizer. 
  We conduct numerical experiments using several datasets from the LibSVM repository 
  \citep{chang2011libsvm}. 
  We estimate the smoothness matrix of function $f$ here as 
  \begin{eqnarray*}
    \mL = \frac{1}{n}\sum_{i=1}^{n}\frac{a_ia_i^{\top}}{4} + 2\lambda\cdot\mI_d.
  \end{eqnarray*}

  \subsubsection{Comparison to CGD with scalar stepsize, scalar smoothness constant}
  The purpose of the first experiment is to show that by using matrix stepsize, \ref{eq:alg1} and \ref{eq:alg2} will have better iteration and communication complexities compared to standard CGD. 
  We run a CGD with scalar stepsize $\gamma$ and a scalar smoothness constant $L = \lambda_{\max}(\mL)$) and CGD with scalar stepsize $\gamma \cdot \mI_d$ and smoothness matrix $\mL$. 
  We use standard CGD to refer to the CGD with scalar stepsize, scalar smoothness constant, and CGD-mat to refer to CGD with scalar stepsize, smoothness matrix in \Cref{fig:experiment-1-sub-1}, \ref{fig:experiment-2-sub-1}. The notation $G_{K, \mD}$ appears in the label of y axis is defined as 
  \begin{eqnarray}
    \label{eq:def-G-var}
    G_{K, \mD} := \frac{1}{K}\left(\sum_{k=0}^{K-1}\norm{\nabla f(x^k)}^2_{\frac{\mD}{\det(\mD)^{1/d}}}\right),
  \end{eqnarray}
  it is the average matrix norm of the gradient of $f$ over the first $K-1$ iterations in log scale. The weight matrix here has determinant $1$, and thus it is comparable to the standard Euclidean norm. The result is meaningful in this sense.

  \begin{figure}
	\centering
    \subfigure{
	\begin{minipage}[b]{0.98\textwidth}
		\includegraphics[width=0.32\textwidth]{./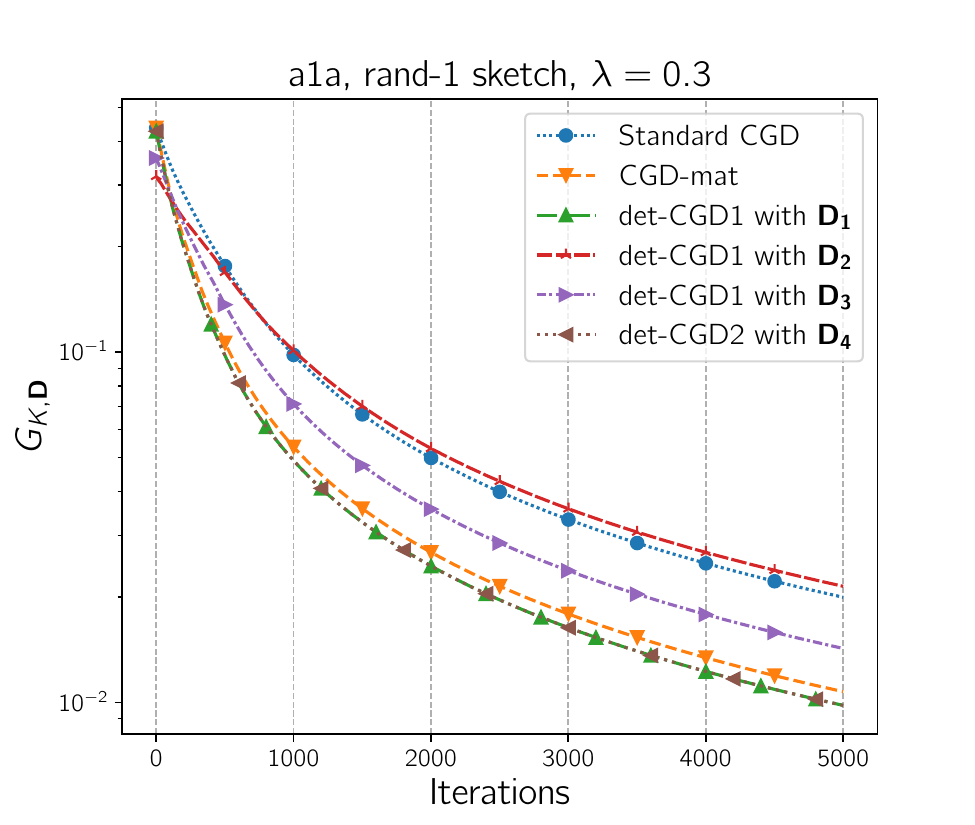} 
		\includegraphics[width=0.32\textwidth]{./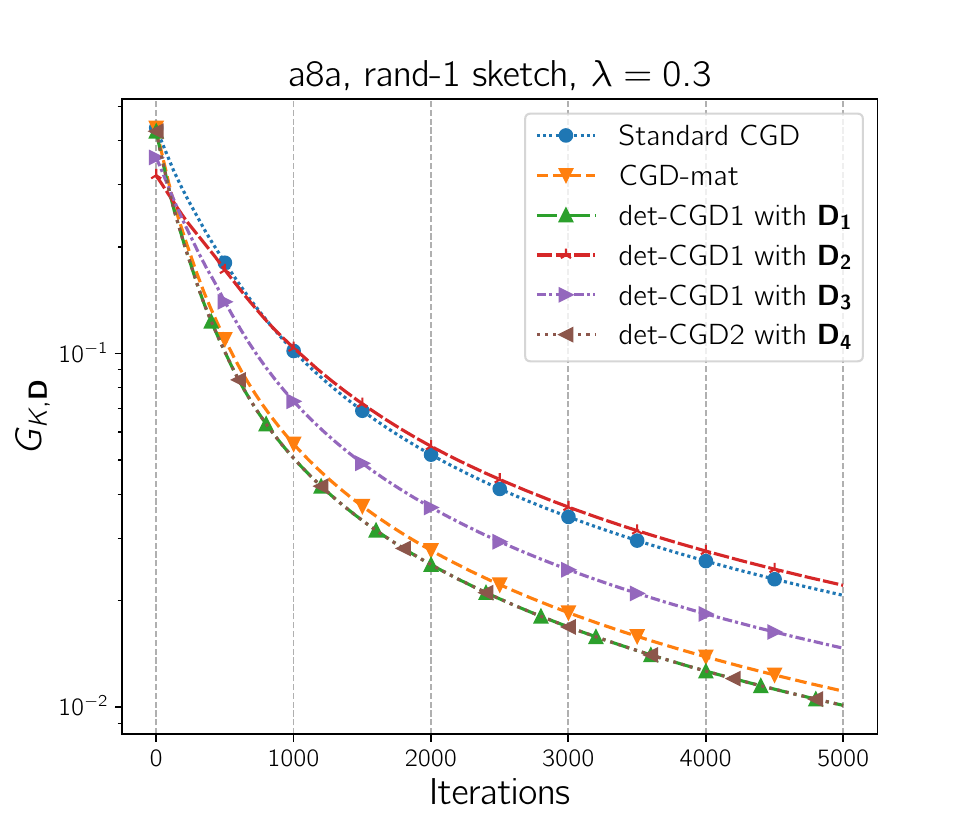}
        \includegraphics[width=0.32\textwidth]{./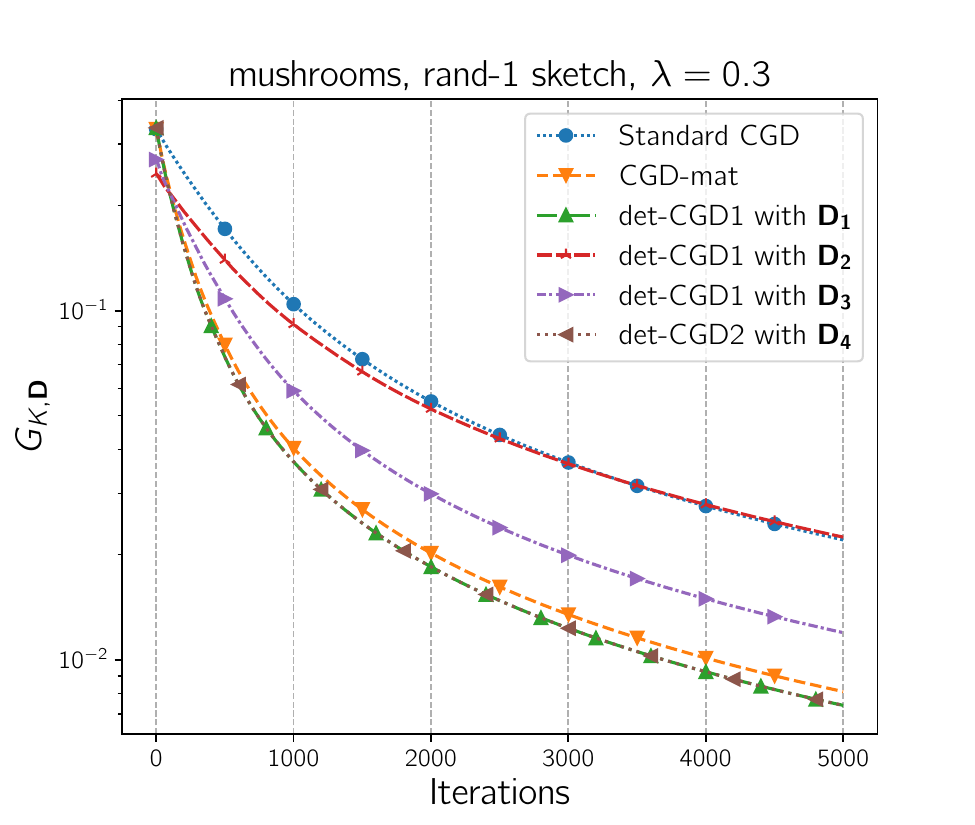}
	\end{minipage}
    }

    \subfigure{
	\begin{minipage}[b]{0.98\textwidth}
		\includegraphics[width=0.32\textwidth]{./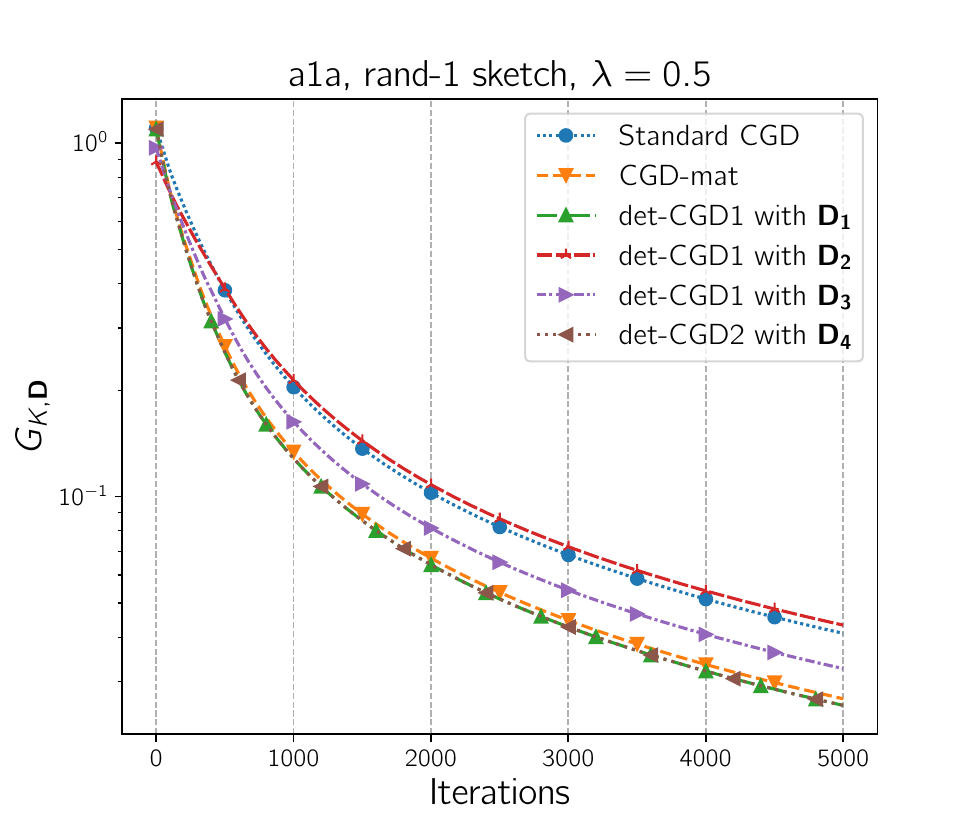} 
		\includegraphics[width=0.32\textwidth]{./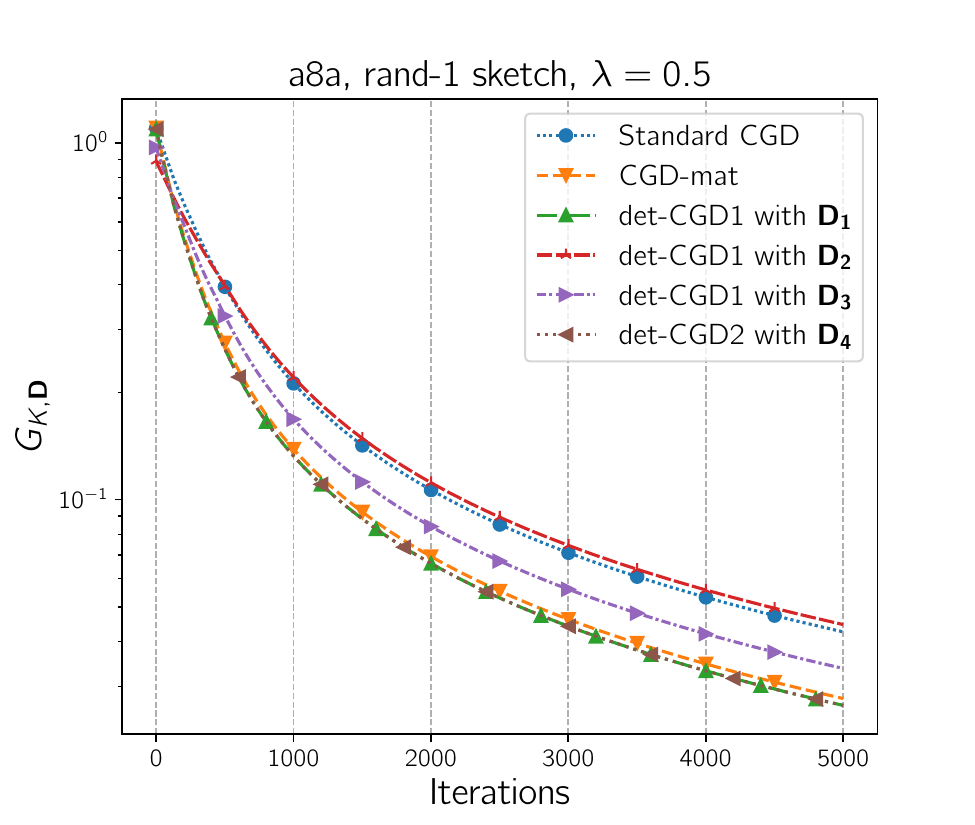}
        \includegraphics[width=0.32\textwidth]{./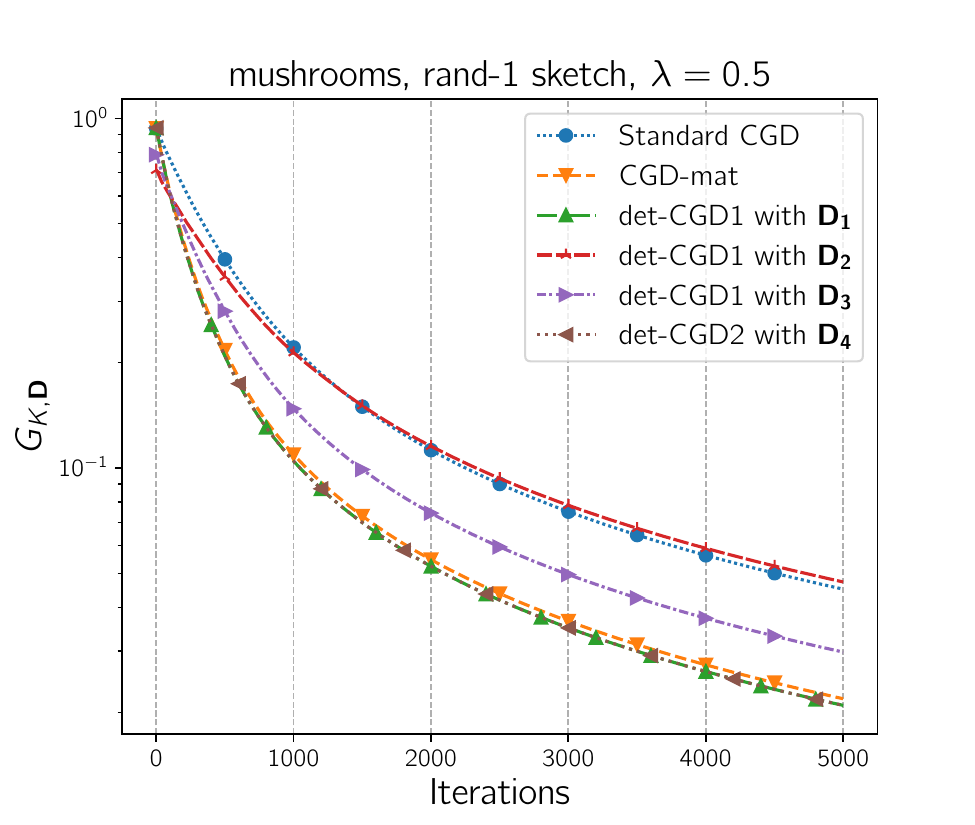}
	\end{minipage}
    }

	\caption{Comparison of standard CGD, CGD-mat, \ref{eq:alg1} with $\mD_1 = \gamma_1 \cdot \diag^{-1}(\mL)$, \ref{eq:alg1} with $\mD_2 = \gamma_2 \cdot \mL^{-1}$, \ref{eq:alg1} with $\mD_3 = \gamma_3 \cdot \mL^{-1/2}$ and \ref{eq:alg2} with $\mD_4 = \gamma_4 \cdot \diag^{-1}(\mL)$, where $\gamma_1, \gamma_2, \gamma_3$ are the optimal scaling factors for \ref{eq:alg1} in that case, $\mD_4$ is the optimal matrix stepsize for \ref{eq:alg2}. Rand-$1$ sketch is used in all the methods through out the experiments. The notation $G_{K, \mD}$ in the $y$-axis is defined in \eqref{eq:def-G-var}.}
	\label{fig:experiment-1-sub-1}
\end{figure}

The result presented in \Cref{fig:experiment-1-sub-1} suggests that compared to the standard CGD \citep{khirirat2018distributed}, CGD-mat performs better in terms of both iteration complexity and communication complexity. 
Furthermore,  \ref{eq:alg1} and \ref{eq:alg2} with the best diagonal matrix stepsizes outperform both CGD and CGD-mat which confirms our theory. 
The scaling factors $\gamma_1, \gamma_2, \gamma_3$ here for \ref{eq:alg1} are determined using \Cref{thm:block_diag} with $\nlay = 1$. The matrix stepsize for \ref{eq:alg2} is determined through \eqref{eq:opt-D-2}. \ref{eq:alg1} and \ref{eq:alg2} with diagonal matrix stepsizes perform very similarly in the experiment, this is expected since we are using rand-$1$ sketch, which means that the stepsize matrix and the sketch matrix are commutable since they are both diagonal. We also notice that \ref{eq:alg1} with $\mD_2 = \gamma_2 \cdot \mL^{-1}$ is always worse than $\mD_4 = \gamma_4 \cdot \diag^{-1}(\mL)$, this is also expected since we mentioned in \Cref{sec:worse} that the result row $5$ (corresponding to $\mD_2$) in \Cref{Table:comm-complex-single-node} is always worse than row $7$ (corresponding to $\mD_4$).

\subsubsection{Comparison of the two algorithms under the same stepsize}
The purpose of the second experiment is to compare the performance of \ref{eq:alg1} and \ref{eq:alg2} in terms of iteration complexity and communication complexity. 
We know the conditions for \ref{eq:alg1} and \ref{eq:alg2} to converge are given by \eqref{eq:ineq_D} and \eqref{eq:ineq_D-var} respectively.
As a result, we are able to obtain the optimal matrix stepsize for \ref{eq:alg2} if we are using rand-$\tau$ sparsification. It is given by 
\begin{eqnarray*}
    \mD_2^* = \frac{\tau}{d}\left(\frac{d - \tau}{d - 1}\diag(\mL) + \frac{\tau - 1}{d - 1}\mL\right)^{-1},
\end{eqnarray*}
according to \eqref{eq:opt-D-2}. The definition of $G_{K, \mD}$ is given in \eqref{eq:def-G-var}. 
Parameter $\tau$ here for random sparsification is set to be an the integer part $\{\frac{d}{4}, \frac{d}{2}, \frac{3d}{4}\}$, where $d$ is the dimension of the model.

\begin{figure}
	\centering
    \subfigure{
	\begin{minipage}[b]{0.98\textwidth}
		\includegraphics[width=0.32\textwidth]{./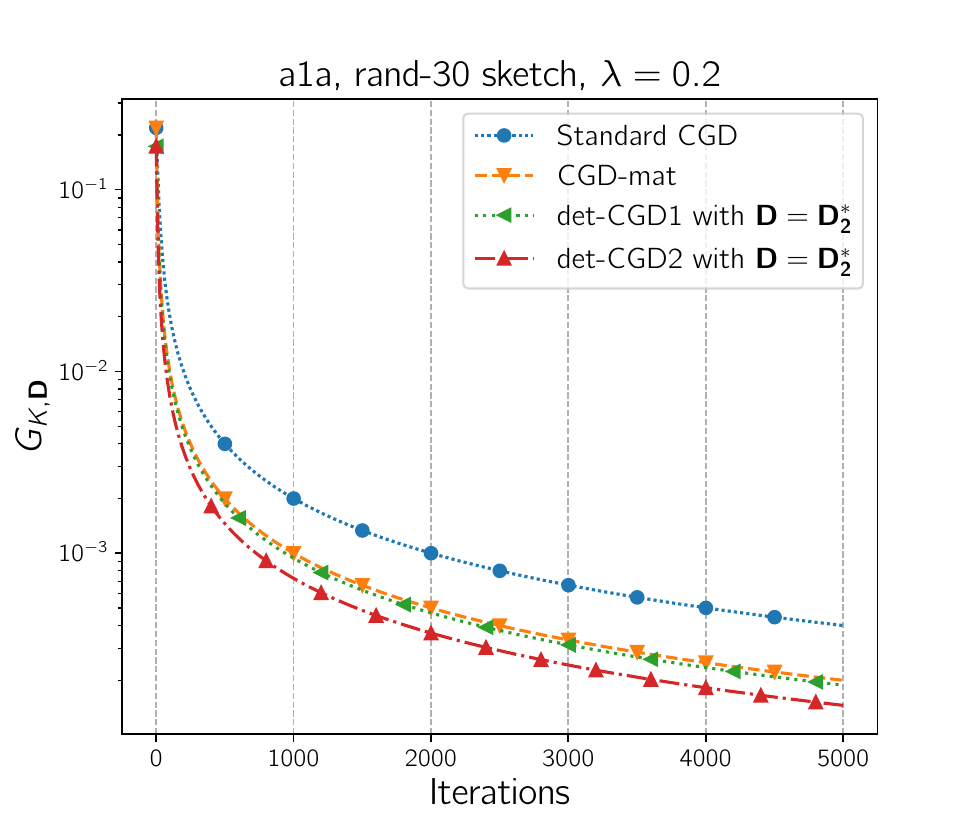} 
		\includegraphics[width=0.32\textwidth]{./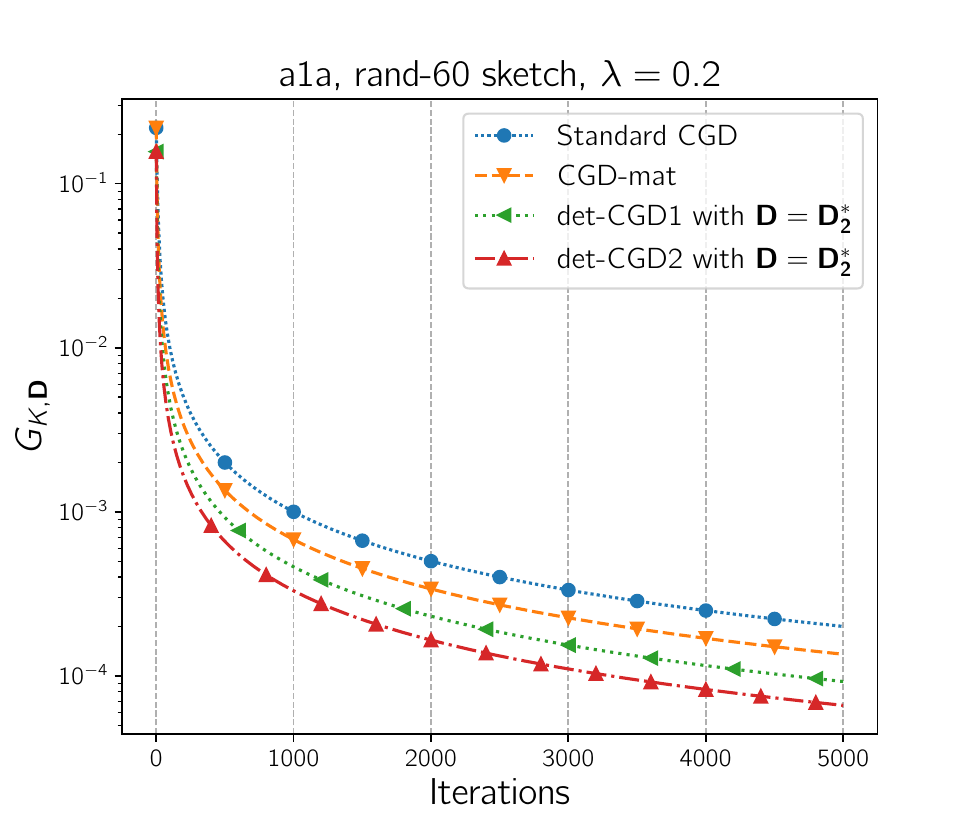}
        \includegraphics[width=0.32\textwidth]{./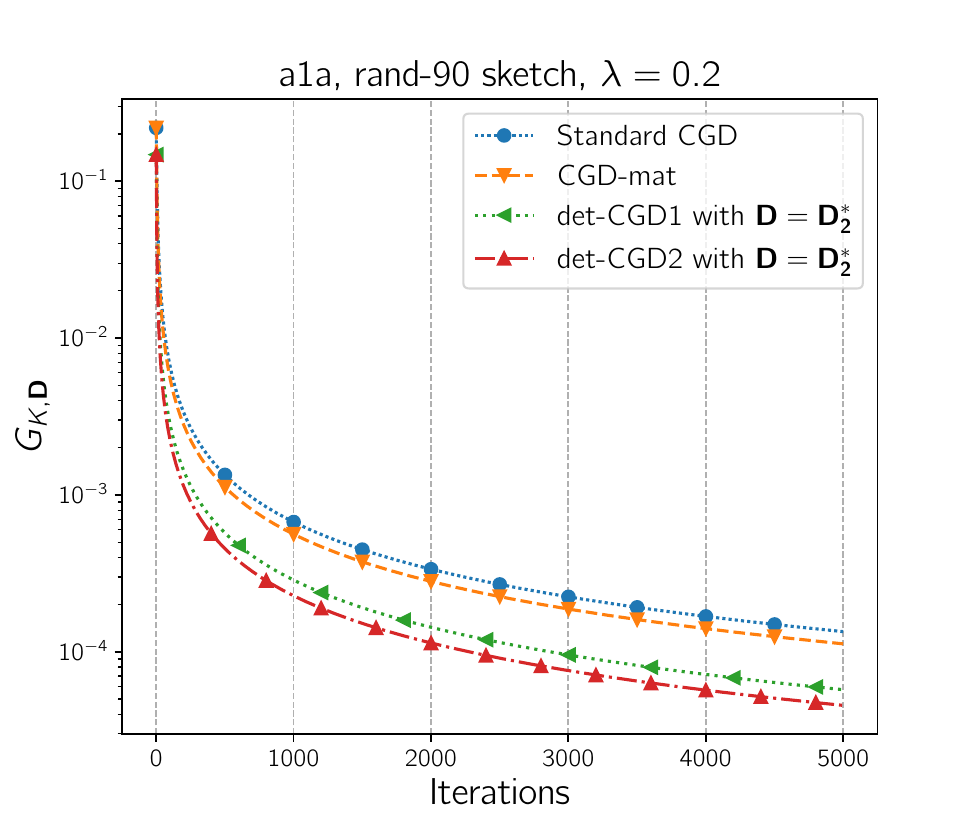}
	\end{minipage}
    }

    \subfigure{
	\begin{minipage}[b]{0.98\textwidth}
		\includegraphics[width=0.32\textwidth]{./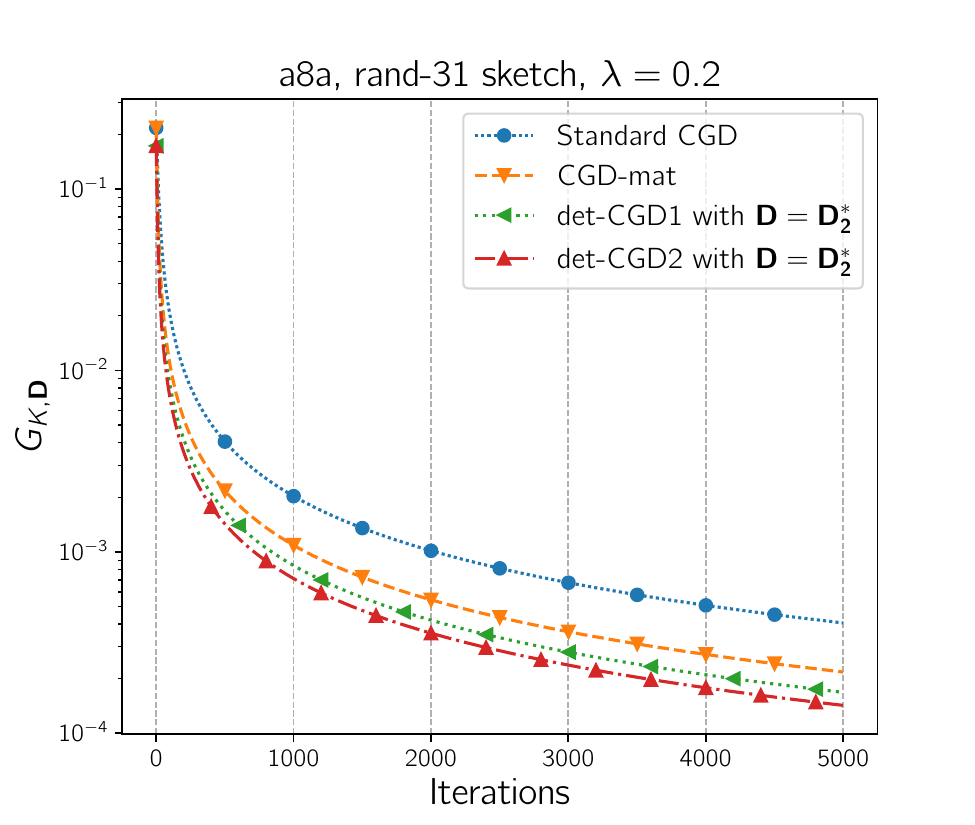} 
		\includegraphics[width=0.32\textwidth]{./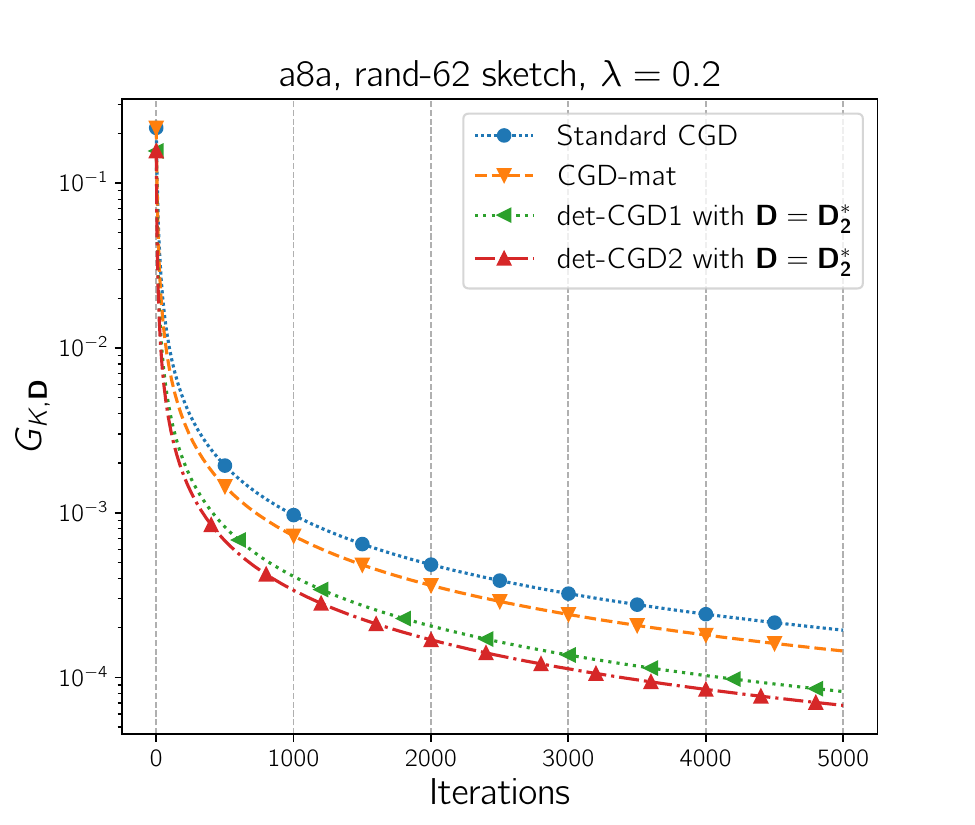}
        \includegraphics[width=0.32\textwidth]{./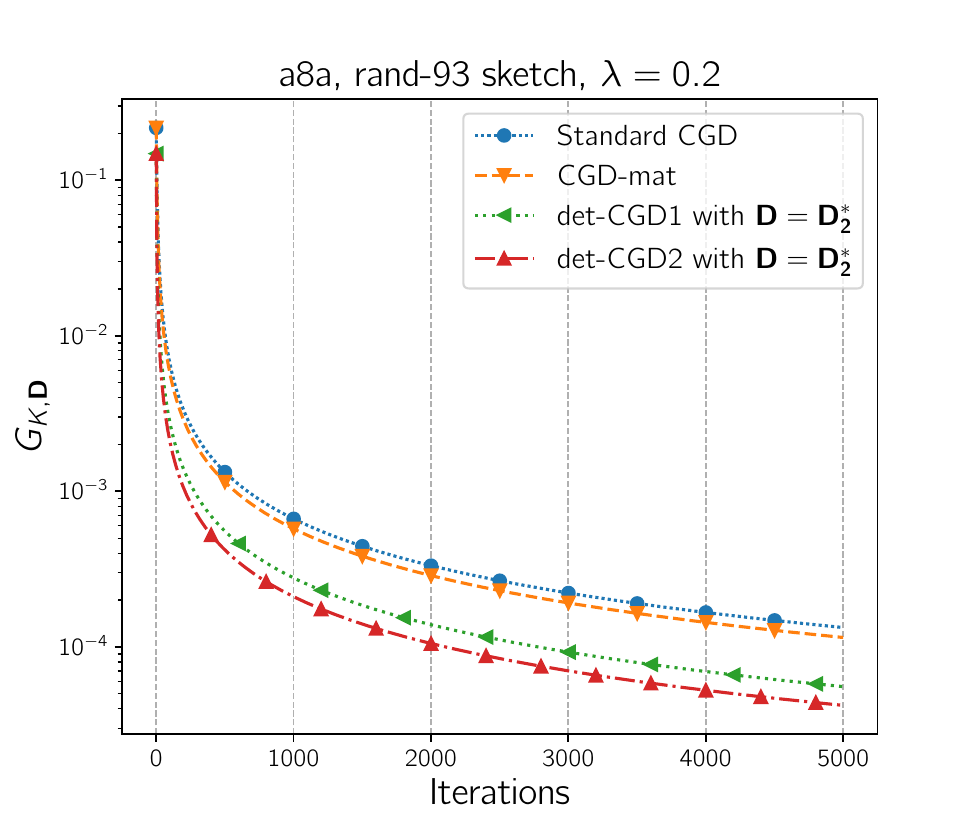}
	\end{minipage}
    }

    \subfigure{
	\begin{minipage}[b]{0.98\textwidth}
		\includegraphics[width=0.32\textwidth]{./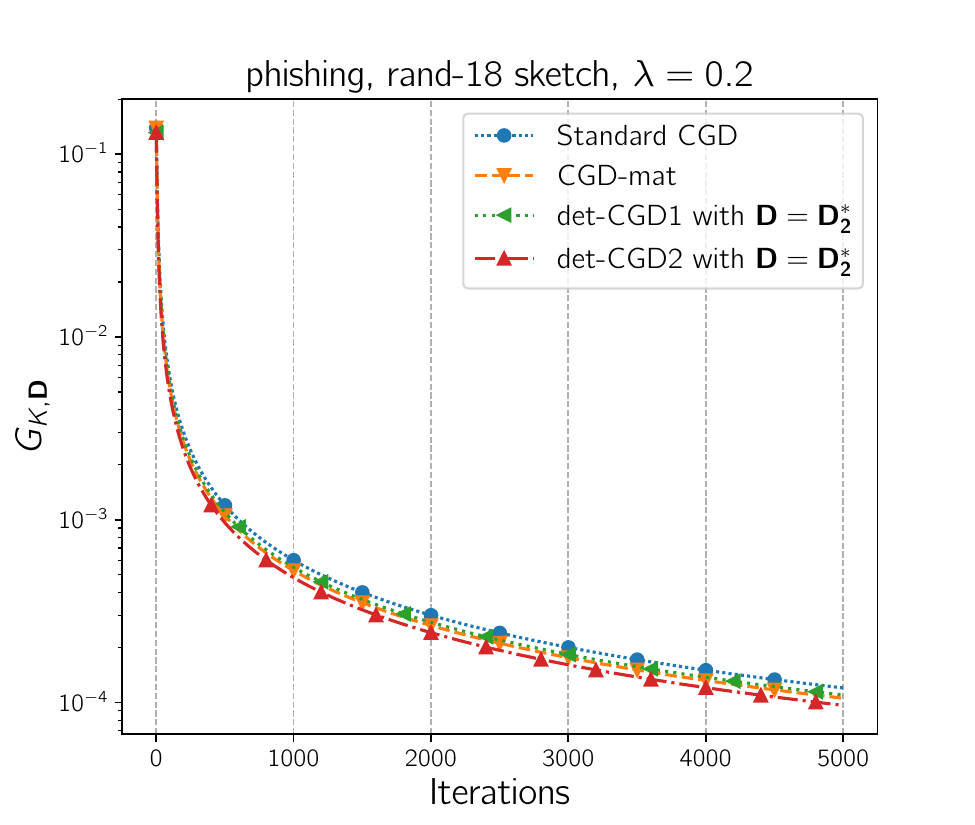} 
		\includegraphics[width=0.32\textwidth]{./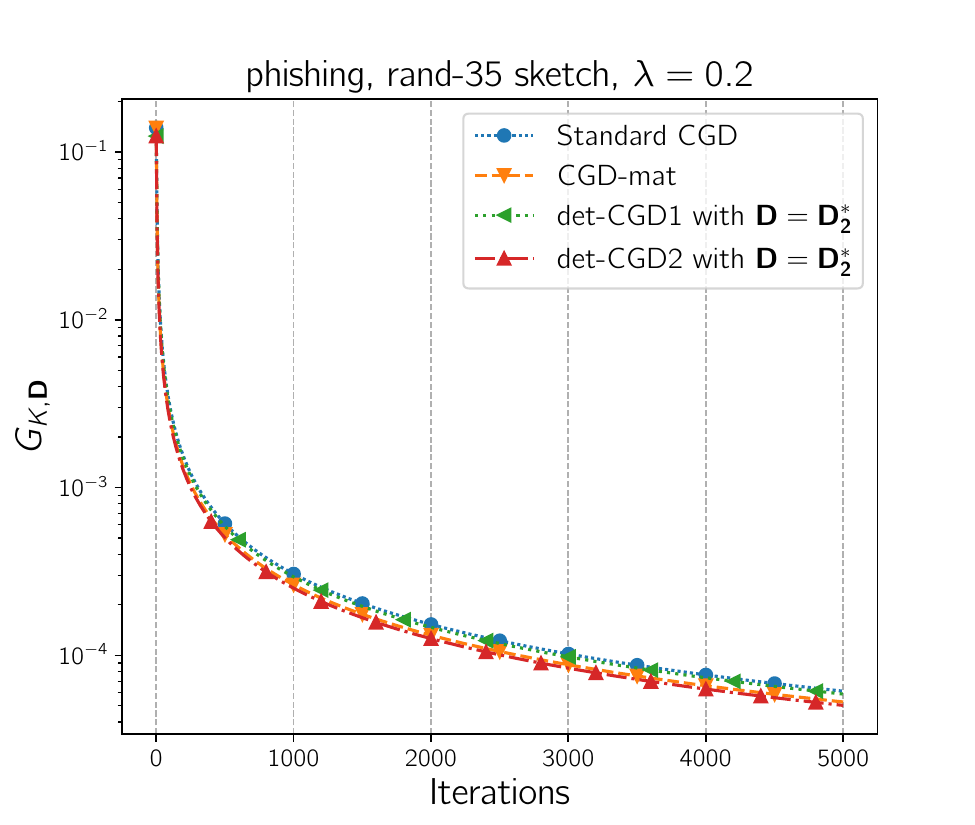}
        \includegraphics[width=0.32\textwidth]{./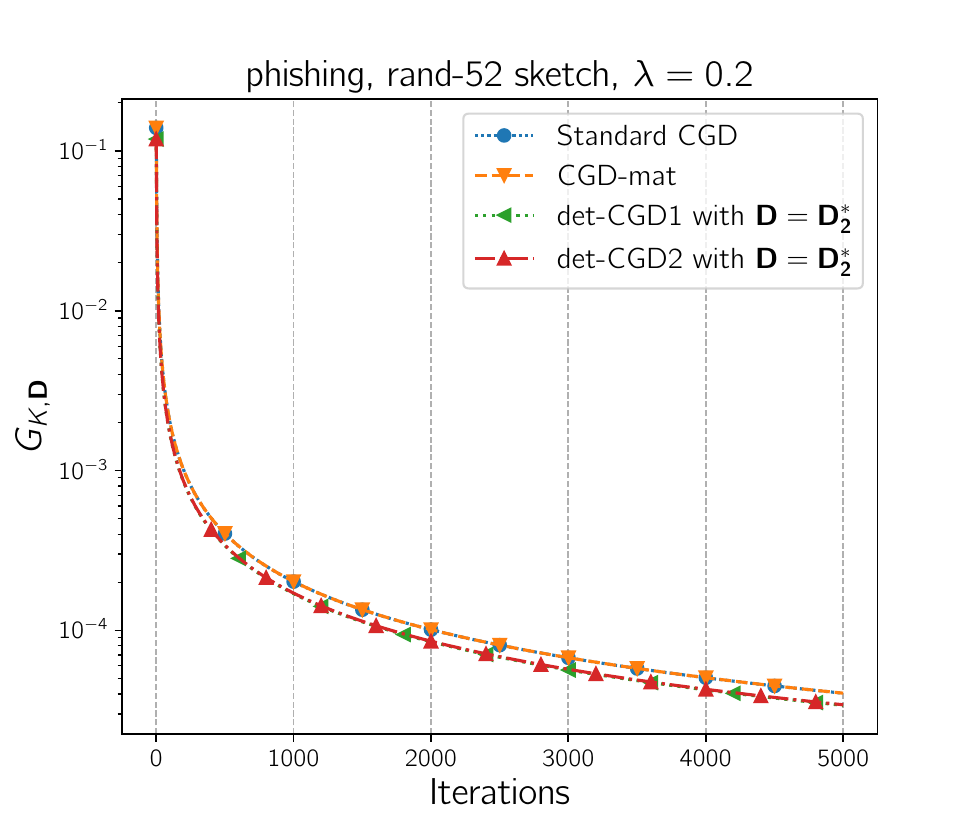}
	\end{minipage}
    }
    
    \subfigure{
	\begin{minipage}[b]{0.98\textwidth}
		\includegraphics[width=0.32\textwidth]{./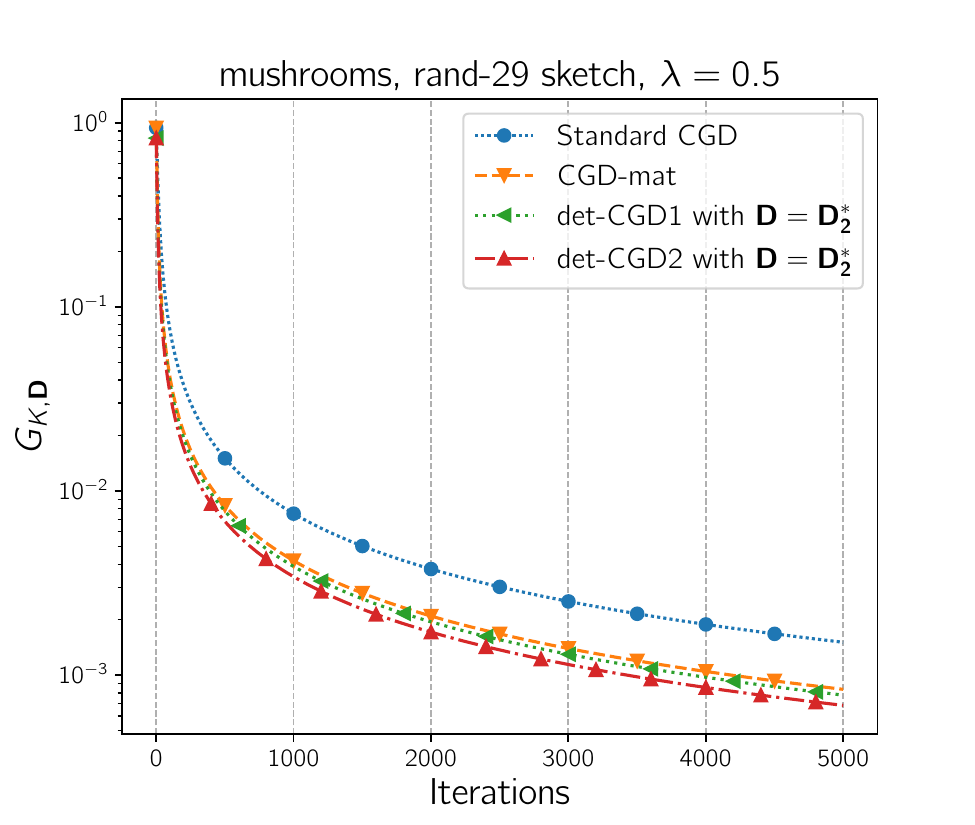} 
		\includegraphics[width=0.32\textwidth]{./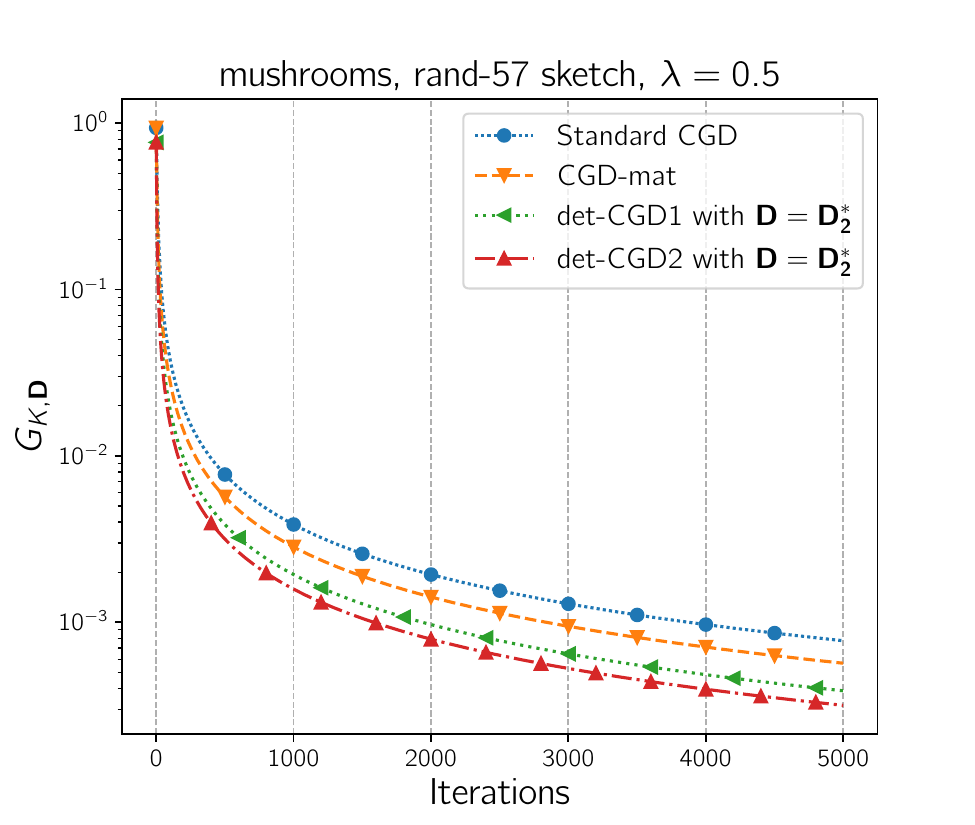}
        \includegraphics[width=0.32\textwidth]{./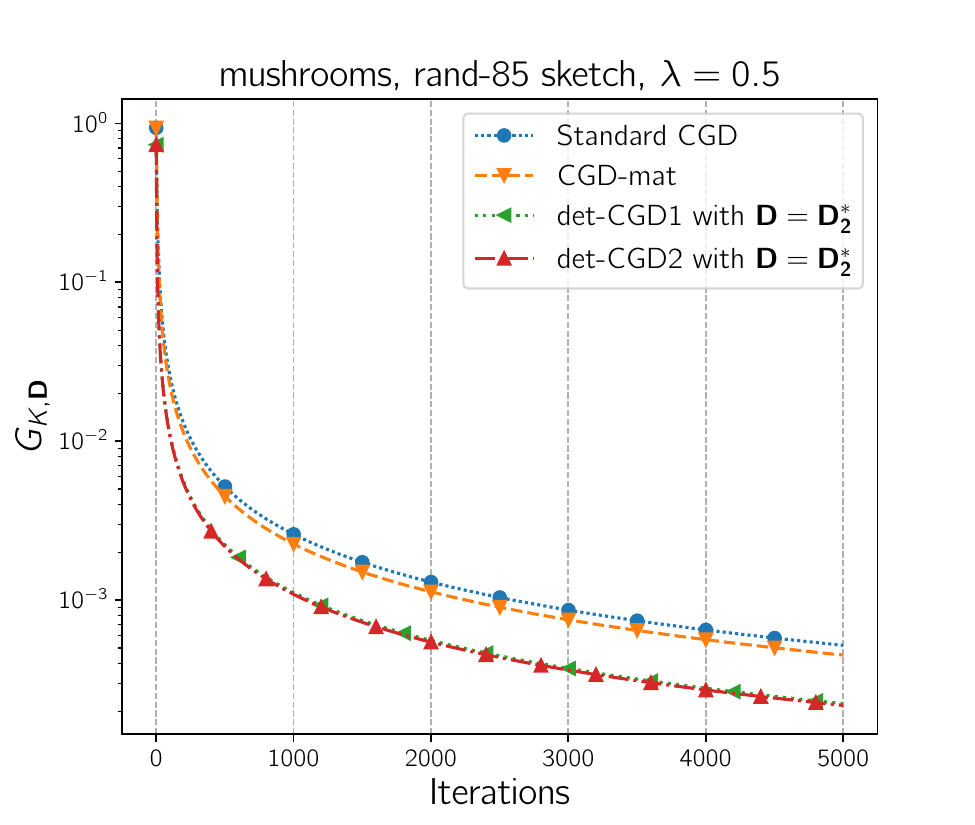}
	\end{minipage}
    }

    \caption{Comparison of standard CGD, CGD-mat \ref{eq:alg1} with stepsize $\mD = \mD_2^*$ and \ref{eq:alg2} with stepsize $\mD = \mD_2^*$, where $\mD_2^*$ is the optimal stepsize matrix for \ref{eq:alg2} and the optimal diagonal stepsize matrix for \ref{eq:alg1}. Rand-$\tau$ sketch is used in all the algorithms throughout the experiments. The notation $G_{K, \mD}$ in the $y$-axis is defined in \eqref{eq:def-G-var}.}
	\label{fig:experiment-2-sub-1}
\end{figure}

It can be observed from the result presented in \Cref{fig:experiment-2-sub-1}, that in almost all cases in this experiment, \ref{alg:dist-alg2} with $\mD = \mD_2^*$ outperforms the other methods. 
Compared to standard CGD and CGD with matrix stepsize, \ref{eq:alg1} and \ref{eq:alg2} are always better. 
This provides numerical evidence in support of our theory. 
In this case, the stepsize matrix is not diagonal for \ref{eq:alg1} and \ref{eq:alg2}, so we do not expect them to perform similarly. 
Notice that in dataset \verb+phishing+, the four algorithms behave very similarly, this is because the smoothness matrix $\mL$ here has a concentrated spectrum.

\subsection{Distributed case}
For the distributed case, we again use the logistic regression problem with a non-convex regularizer as our experiment setting. The objective is given similarly as 
\begin{eqnarray*}
    f(x) = \frac{1}{n}\sum_{i=1}^{n} f_i(x); \quad f_i(x) = \frac{1}{m_i}\sum_{j=1}^{m_i}\log\left(1 + e^{-b_{i, j}\cdot\langle a_{i, j}, x \rangle}\right) + \lambda \cdot \sum_{t=1}^{d}\frac{x_t^2}{1 + x_t^2},
\end{eqnarray*}
where $x \in \R^d$ is the model, $(a_{i,j}, b_{i, j}) \in \R^d \times \left\{-1, +1\right\}$ is one data point in the dataset of client $i$ whose size is $m_i$. $\lambda > 0$ is a constant associated with the regularizer. For each dataset used in the distributed setting, we randomly reshuffled the dataset before splitting it equally to each client. We estimate the smoothness matrices of function $f$ and each individual function $f_i$ here as 
\begin{eqnarray*}
    \mL_i &=& \frac{1}{m_i}\sum_{i=1}^{m_i}\frac{a_ia_i^{\top}}{4} + 2\lambda \cdot \mI_d; \\
    \mL &=& \frac{1}{n}\sum_{i=1}^{n} \mL_i. 
\end{eqnarray*}
The value of $\Delta^{\inf}$ here is determined in the following way, we first perform gradient descent on $f$ and record the minimum value in the entire run, $f^{\inf}$, as the estimate of its global minimum, then we do the same procedure for each $f_i$ to obtain the estimate of its global minimum $f^{\inf}_i$. After that we estimate $\Delta^{\inf}$ using its definition.

\subsubsection{Comparison to standard DCGD in the distributed case}
To ease the reading of this section we use D-\ref{eq:alg1} (resp. D-\ref{eq:alg2}) to refer to  \Cref{alg:dist-alg1} (resp. \Cref{alg:dist-alg2}).
This experiment is designed to show that D-\ref{eq:alg1} and D-\ref{eq:alg2} will have better iteration  and communication complexity compared to standard DCGD \citep{khirirat2018distributed} and DCGD with scalar stepsize, smoothness matrix. 
We will use the standard DCGD here to refer to DCGD with a scalar stepsize and a scalar smoothness constant, and DCGD-mat to refer to the DCGD with a scalar stepsize with smoothness.
The Rand-$1$ sparsifier is used in all the algorithms throughout the experiment. The error level is fixed as $\varepsilon^2 = 0.0001$, the conditions for the standard DCGD to converge can be deduced using Proposition 4 in \cite{khaled2020better}, we use the largest possible scalar stepsize here for standard DCGD. 
The optimal scalar stepsize for DCGD-mat, optimal diagonal matrix stepsize $\mD_1$ for D-\ref{eq:alg1} and $\mD_2$ for D-\ref{eq:alg2} can be determined using \Cref{cor:dist-cond-conv}.

\begin{figure}
	\centering
    \subfigure{
	\begin{minipage}[t]{0.98\textwidth}
		\includegraphics[width=0.32\textwidth]{./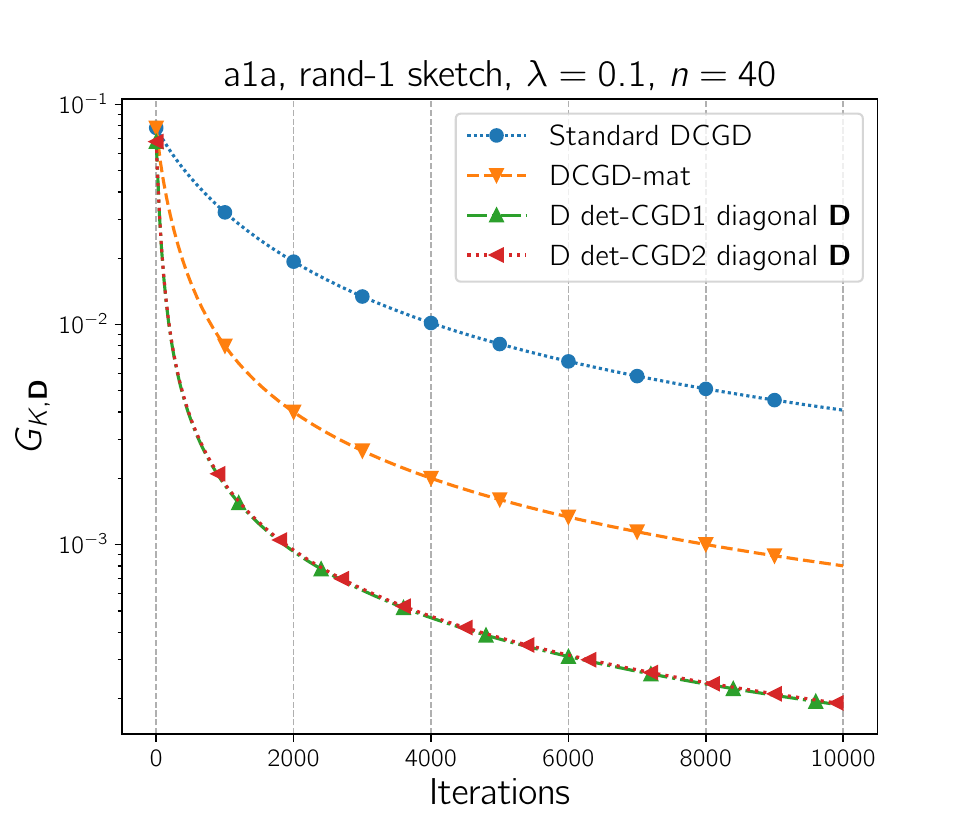} 
		\includegraphics[width=0.32\textwidth]{./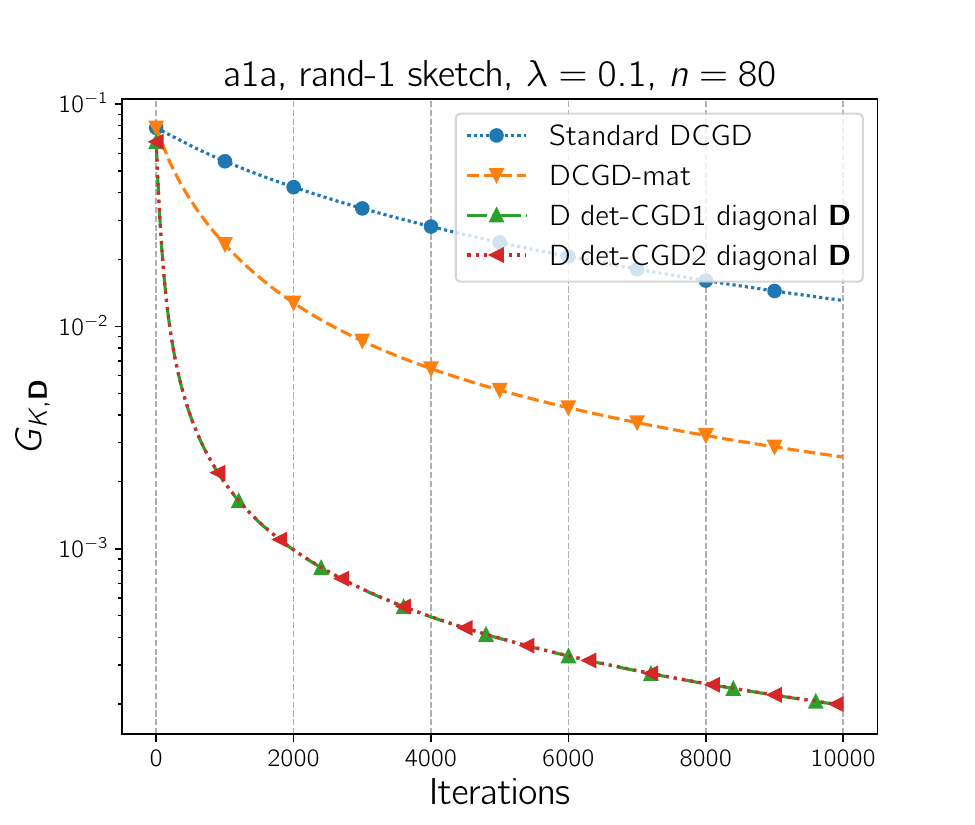}
        \includegraphics[width=0.32\textwidth]{./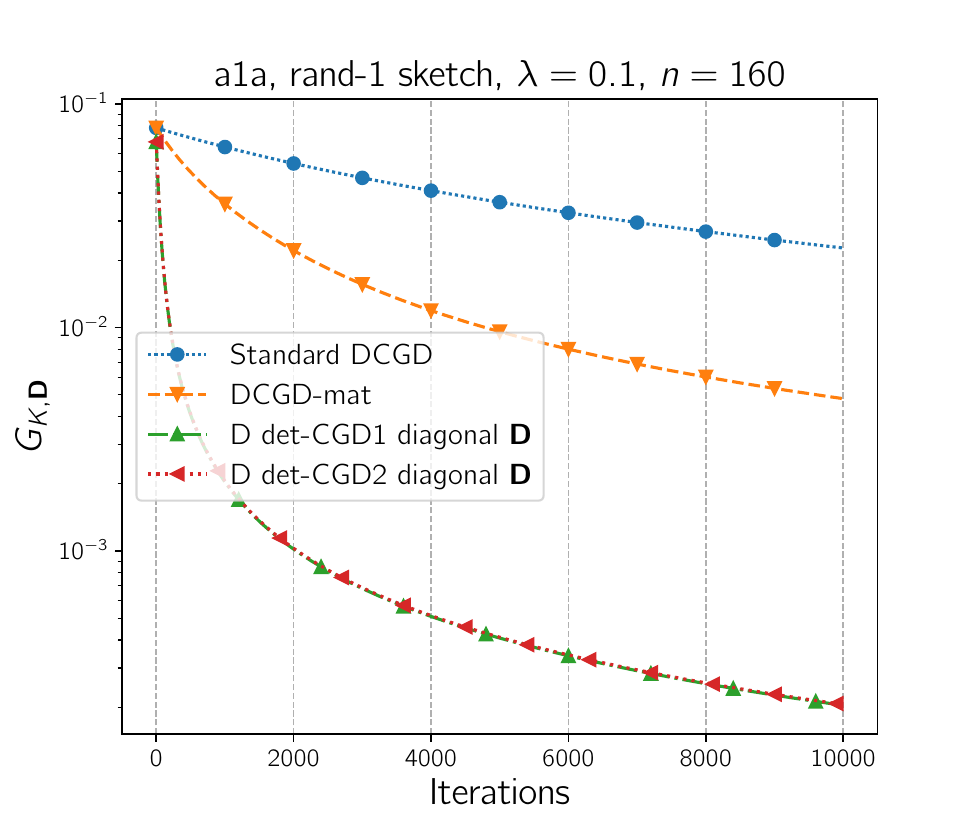}
	\end{minipage}
    }

    \subfigure{
	\begin{minipage}[t]{0.98\textwidth}
		\includegraphics[width=0.32\textwidth]{./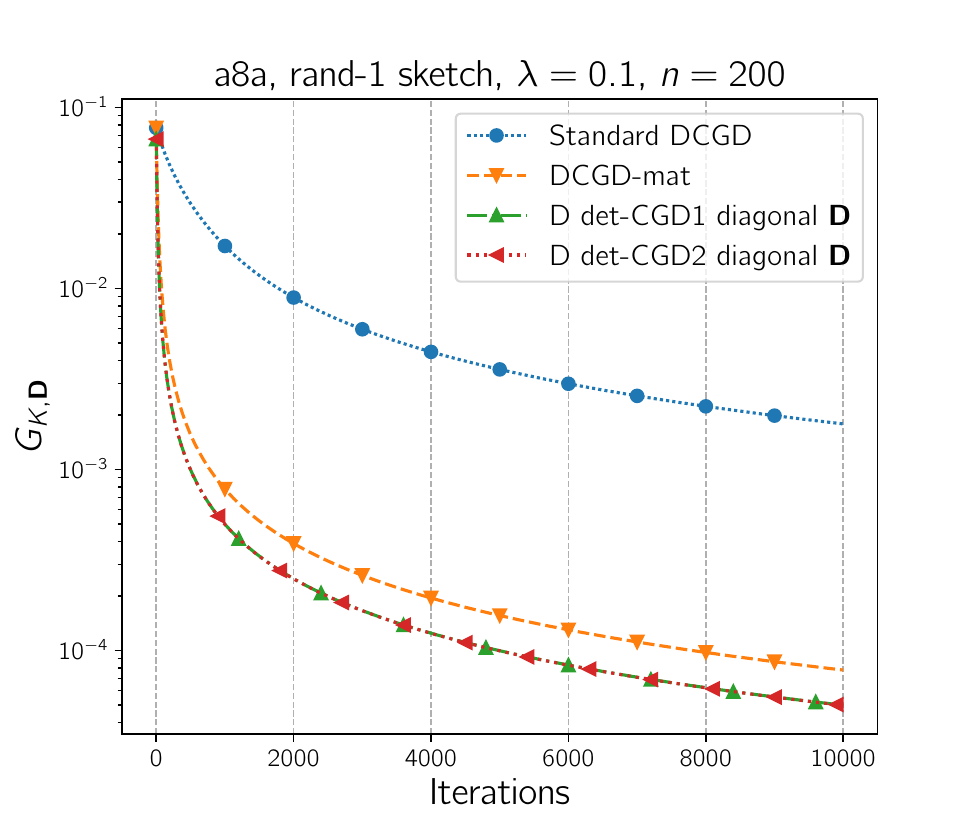} 
		\includegraphics[width=0.32\textwidth]{./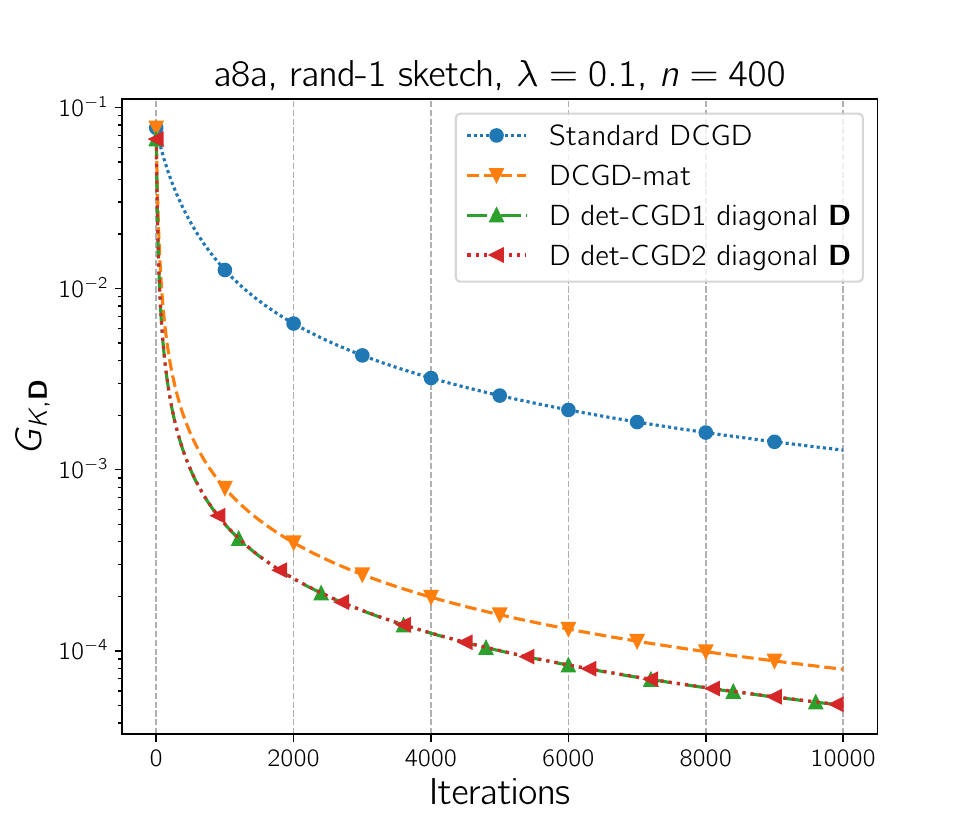}
        \includegraphics[width=0.32\textwidth]{./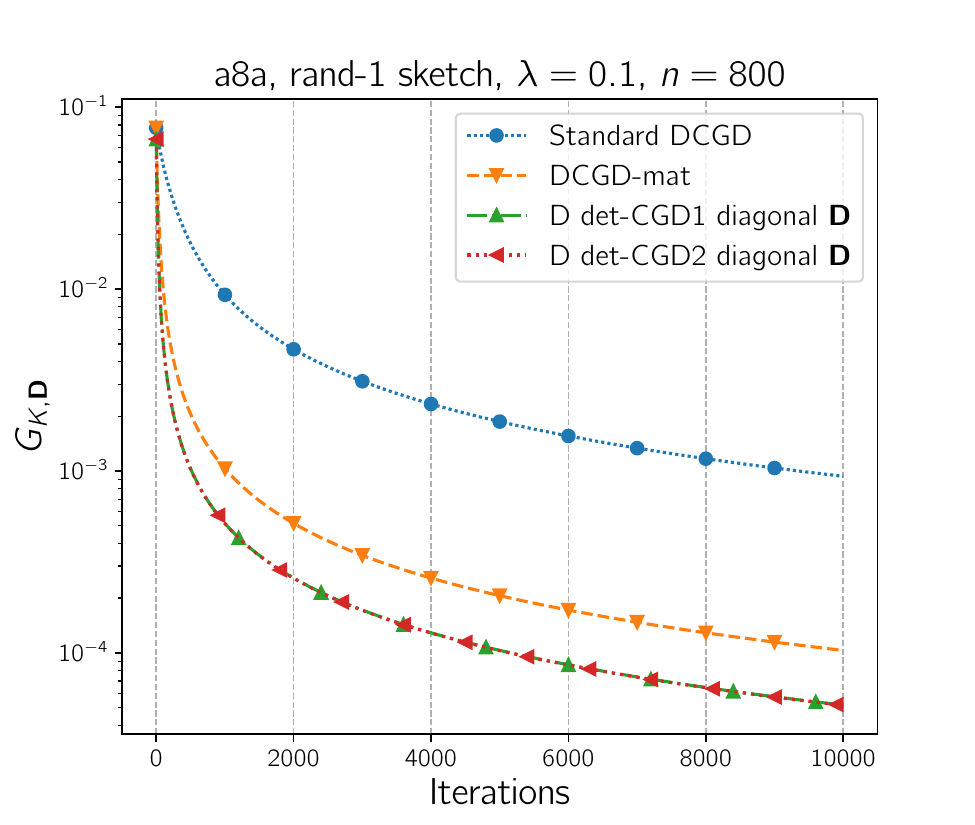}
	\end{minipage}
    }

    \subfigure{
	\begin{minipage}[t]{0.98\textwidth}
		\includegraphics[width=0.32\textwidth]{./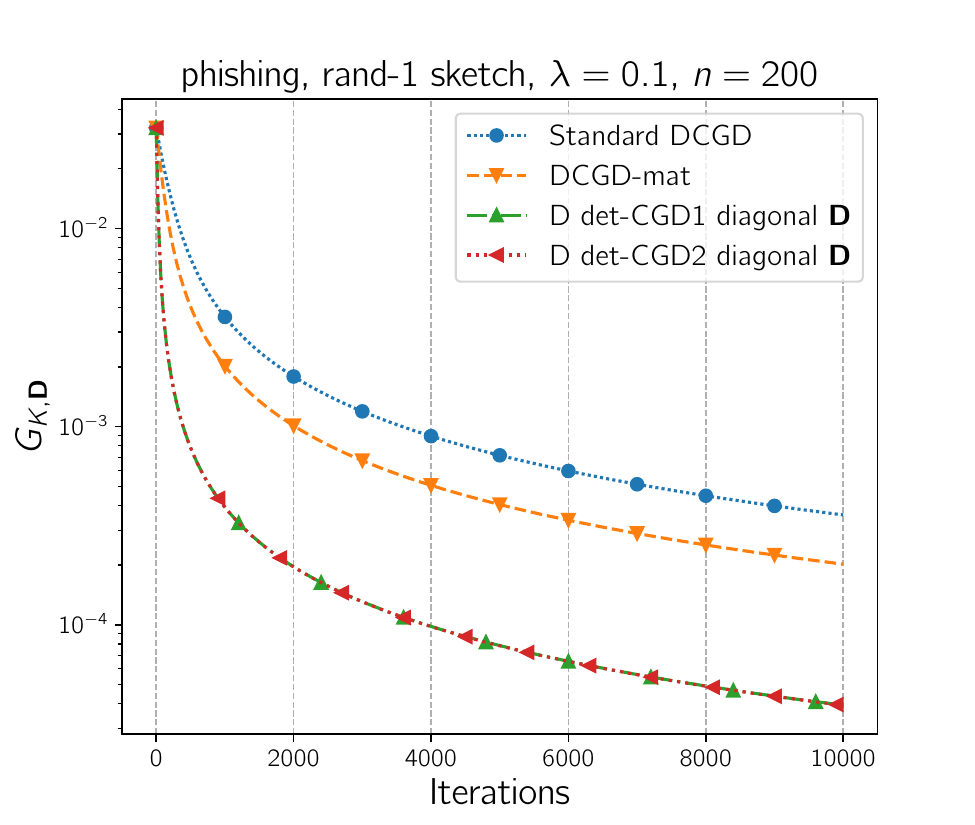} 
		\includegraphics[width=0.32\textwidth]{./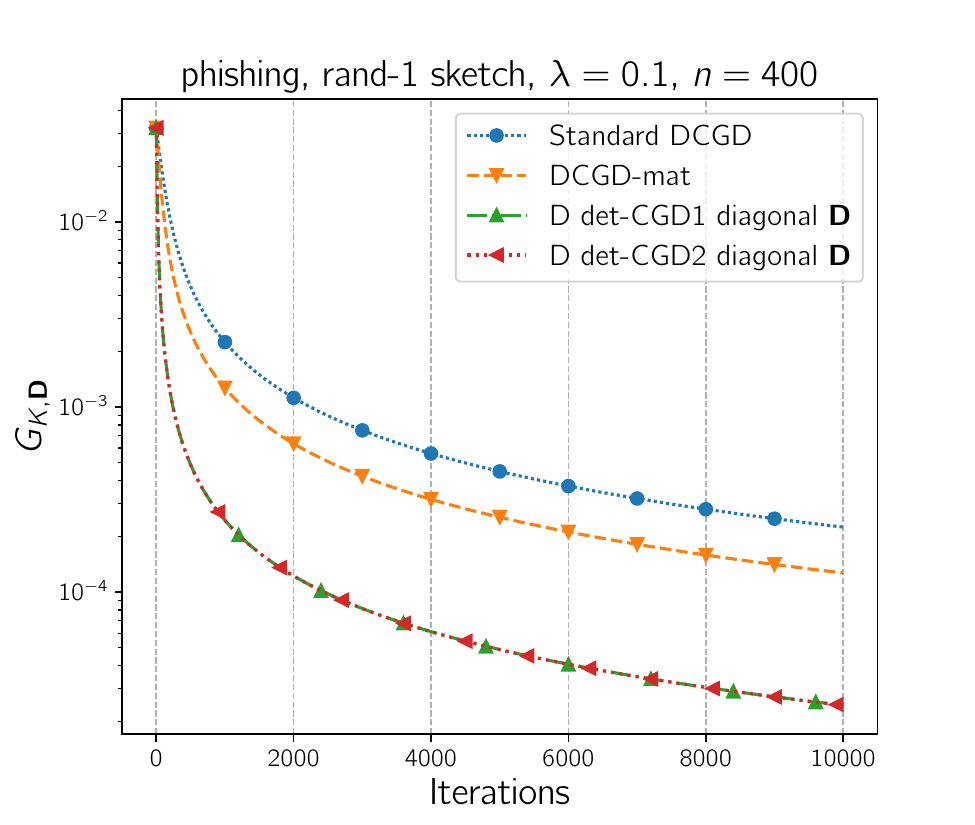}
        \includegraphics[width=0.32\textwidth]{./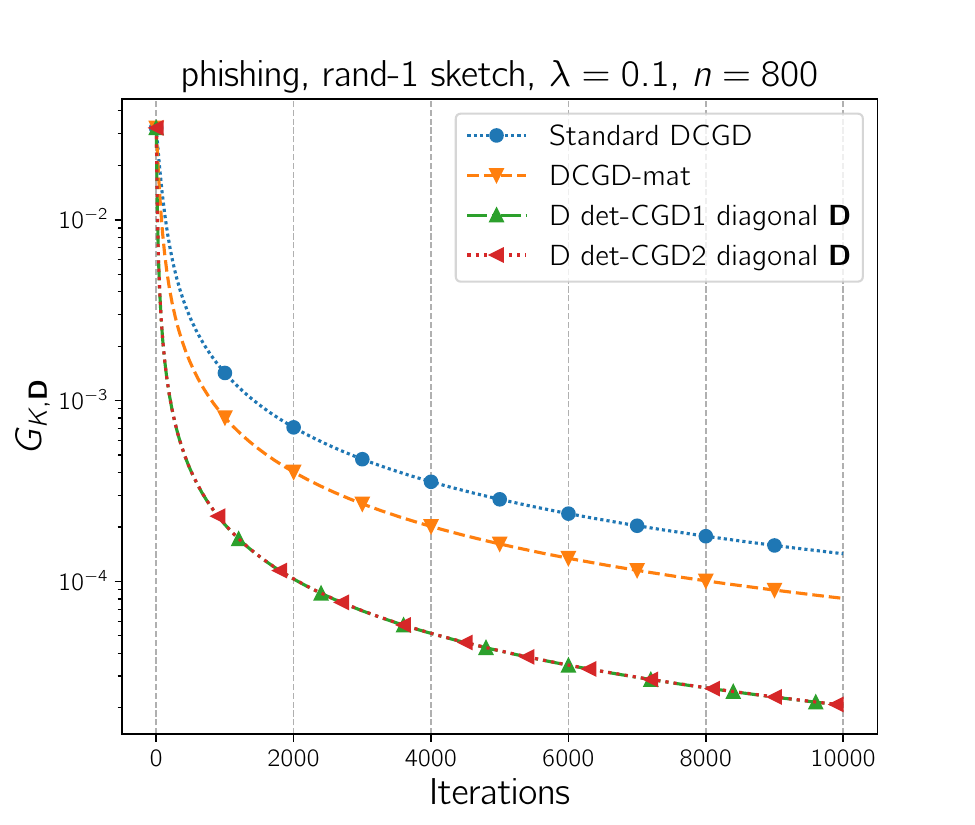}
	\end{minipage}
    }
    
    \caption{Comparison of standard DCGD, DCGD-mat, D-\ref{eq:alg1} with matrix stepsize $\mD_1$ and D-\ref{eq:alg2} with matrix stepsize $\mD_2$, where $\mD_1, \mD_2$ are the optimal diagonal matrix stepsizes for D-\ref{eq:alg1} and D-\ref{eq:alg2} respectively. Rand-$1$ sketch is used in all the algorithms throughout the experiment. The notation $G_{K, \mD}$ in the $y$-axis is defined in \eqref{eq:def-G-var}.}
    \label{fig:experiment-4-sub-1}
\end{figure}

From the result of \Cref{fig:experiment-4-sub-1}, we are able to see that both D-\ref{eq:alg1} and D-\ref{eq:alg2} outperform standard DCGD and DCGD-mat in terms of iteration complexity and communication complexity, which confirms our theory. Notice that D-\ref{eq:alg1}, D-\ref{eq:alg2} are expected to perform very similarly because the stepsize matrix and sketches are diagonal which means that they are commutable. We also plot the corresponding standard Euclidean norm of iterates of D-\ref{eq:alg1} and D-\ref{eq:alg2} in \Cref{fig:experiment-4-sub-2}, the $E_K$ here appears in the $y$-axis is defined as,
\begin{equation}
    \label{eq:standard-euc}
    E_K := \frac{1}{K}\sum_{k=0}^{K-1}\norm{\nabla f(x^k)}^2.
\end{equation}

\begin{figure}
	\centering
    \subfigure{
	\begin{minipage}[t]{0.98\textwidth}
		\includegraphics[width=0.32\textwidth]{./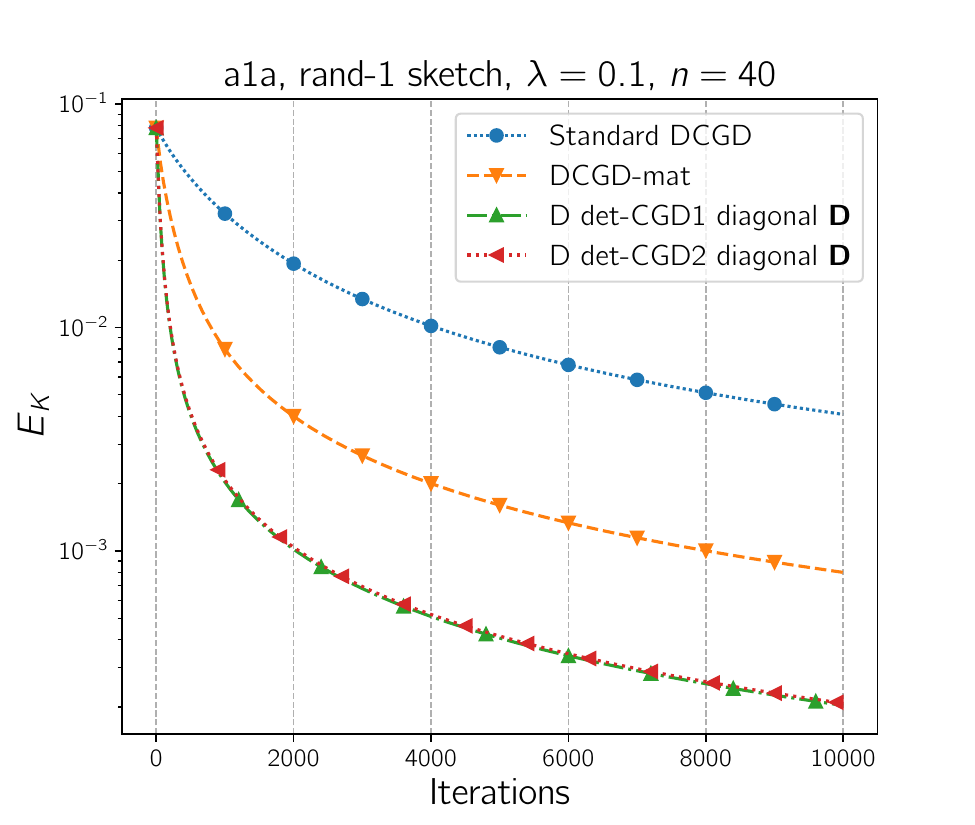} 
		\includegraphics[width=0.32\textwidth]{./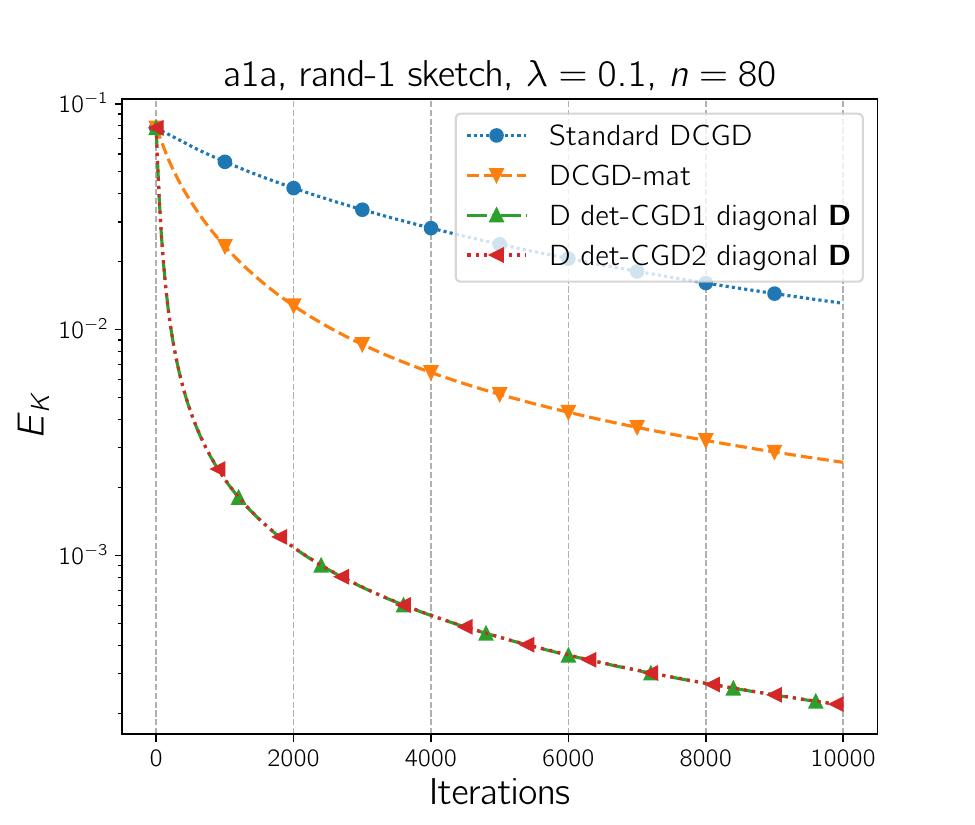}
        \includegraphics[width=0.32\textwidth]{./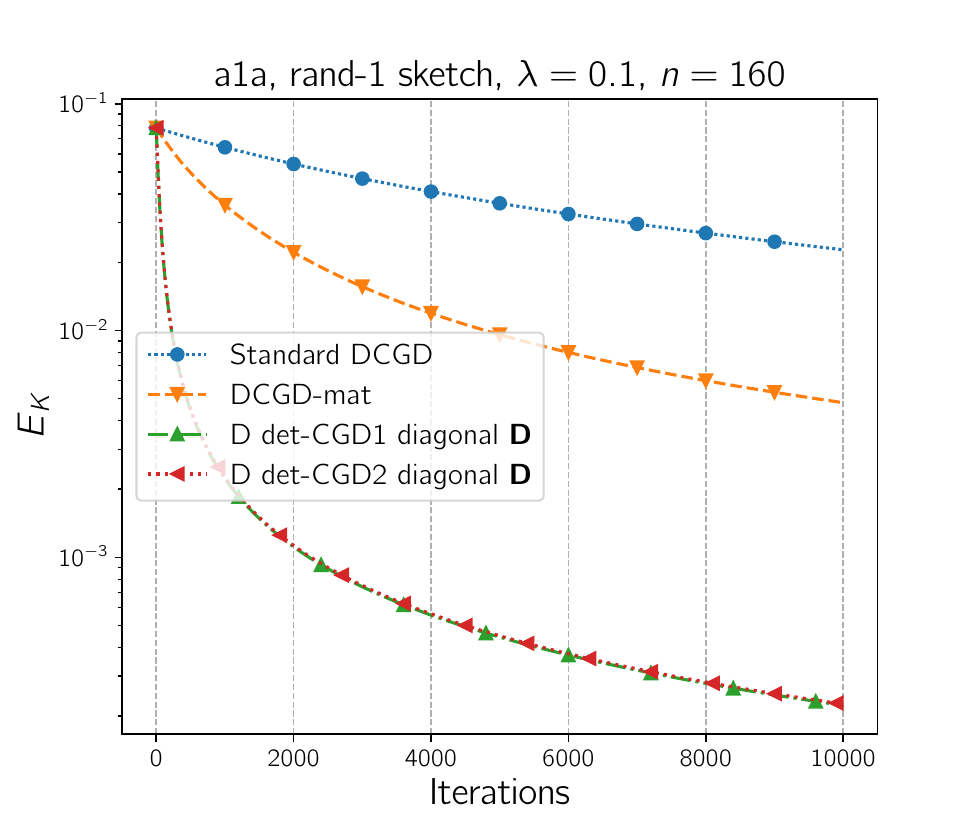}
	\end{minipage}
    }

    \subfigure{
	\begin{minipage}[t]{0.98\textwidth}
		\includegraphics[width=0.32\textwidth]{./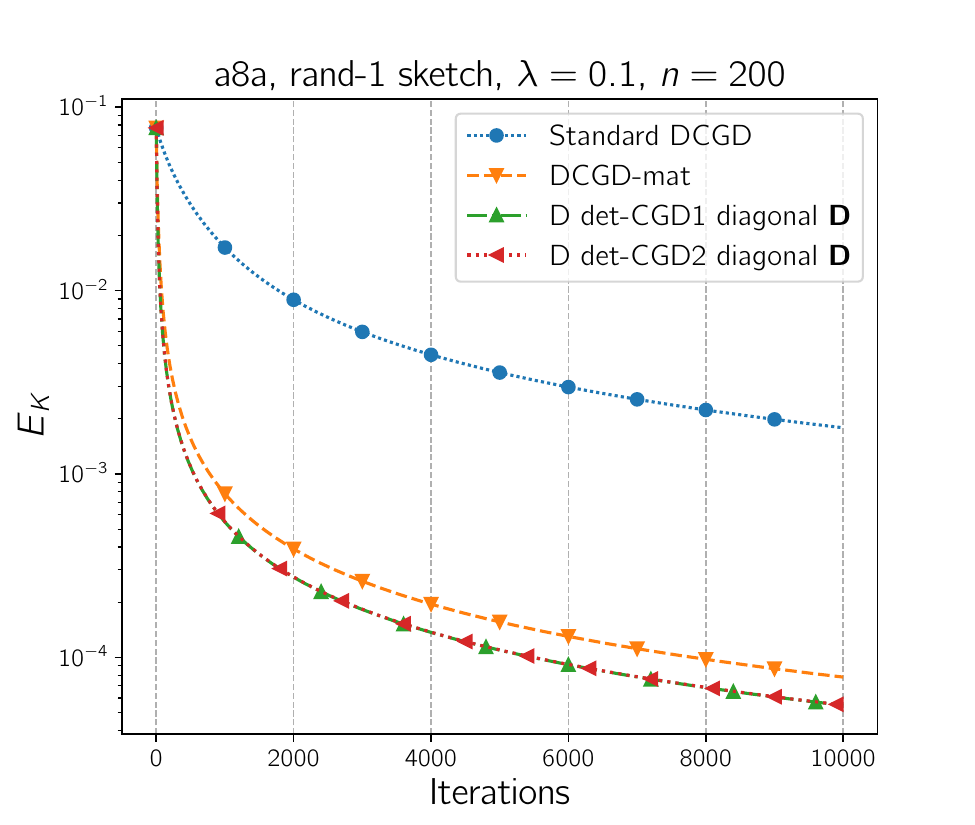} 
		\includegraphics[width=0.32\textwidth]{./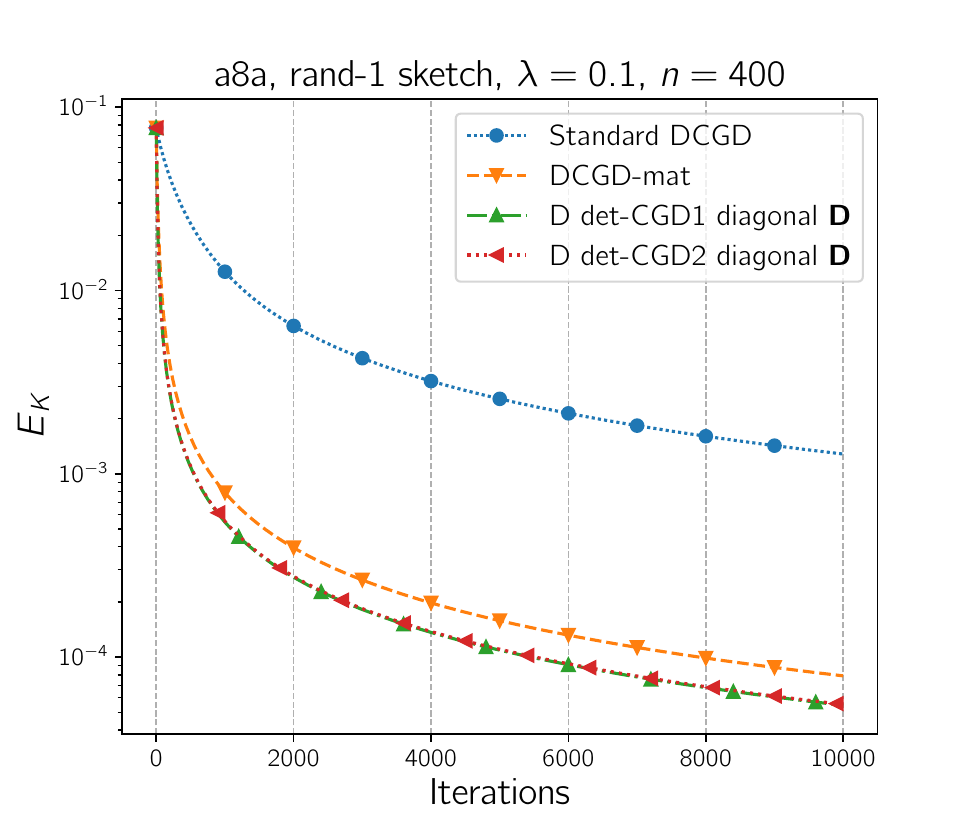}
        \includegraphics[width=0.32\textwidth]{./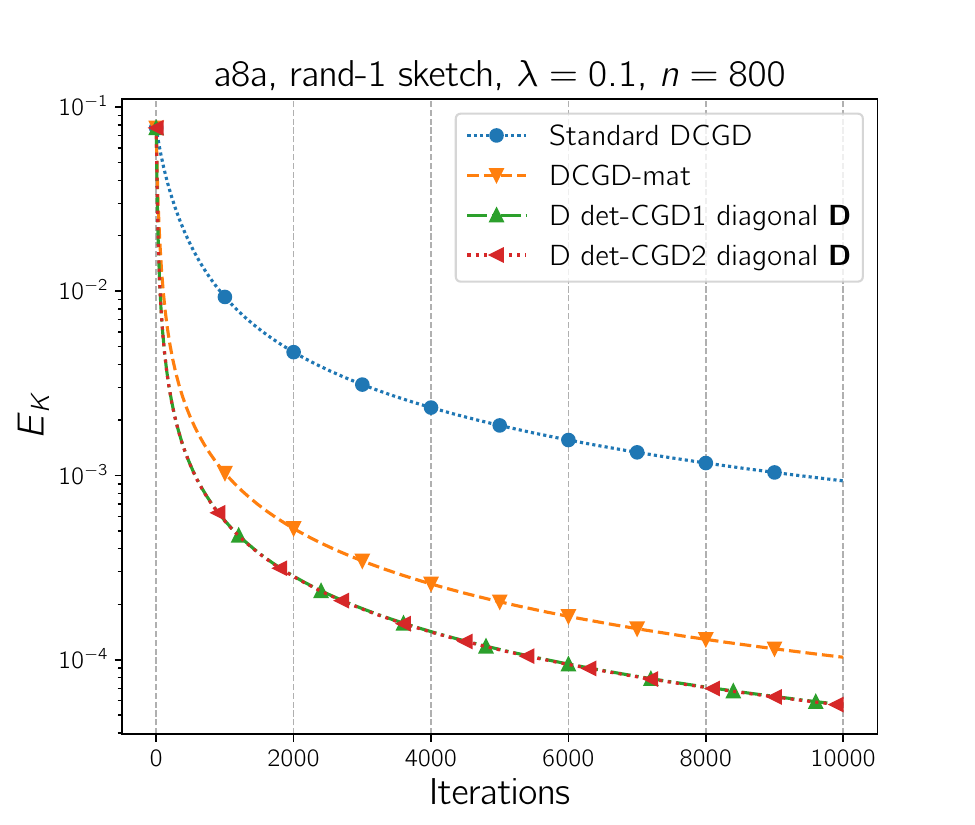}
	\end{minipage}
    }

    \subfigure{
	\begin{minipage}[t]{0.98\textwidth}
		\includegraphics[width=0.32\textwidth]{./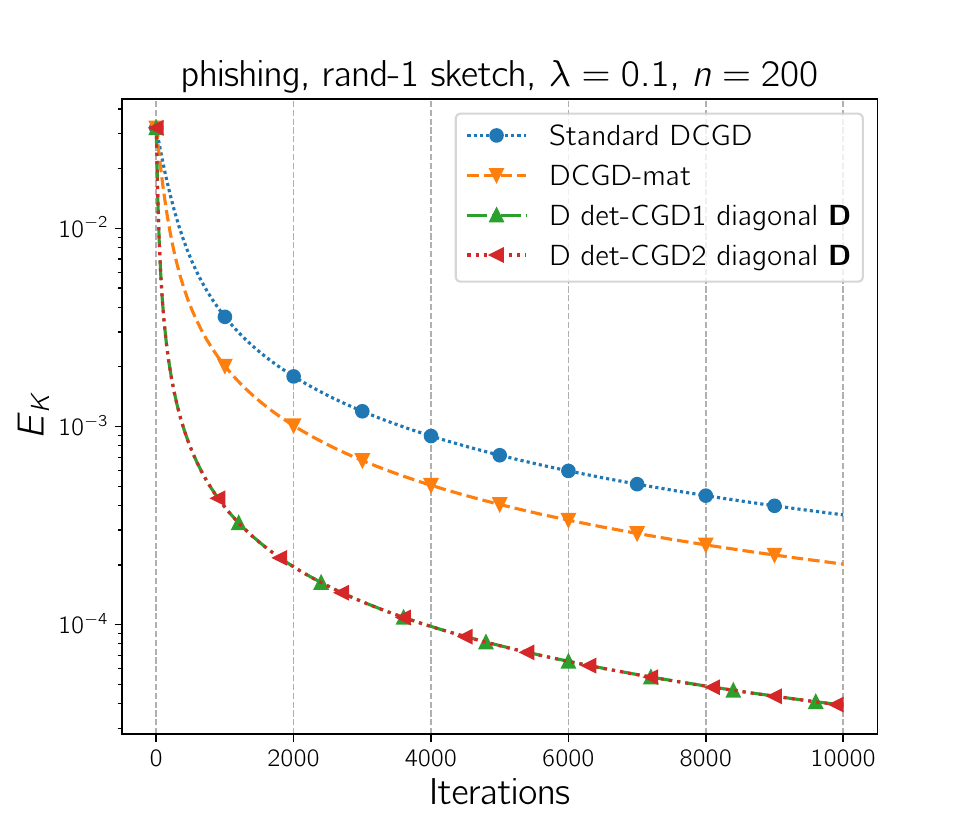} 
		\includegraphics[width=0.32\textwidth]{./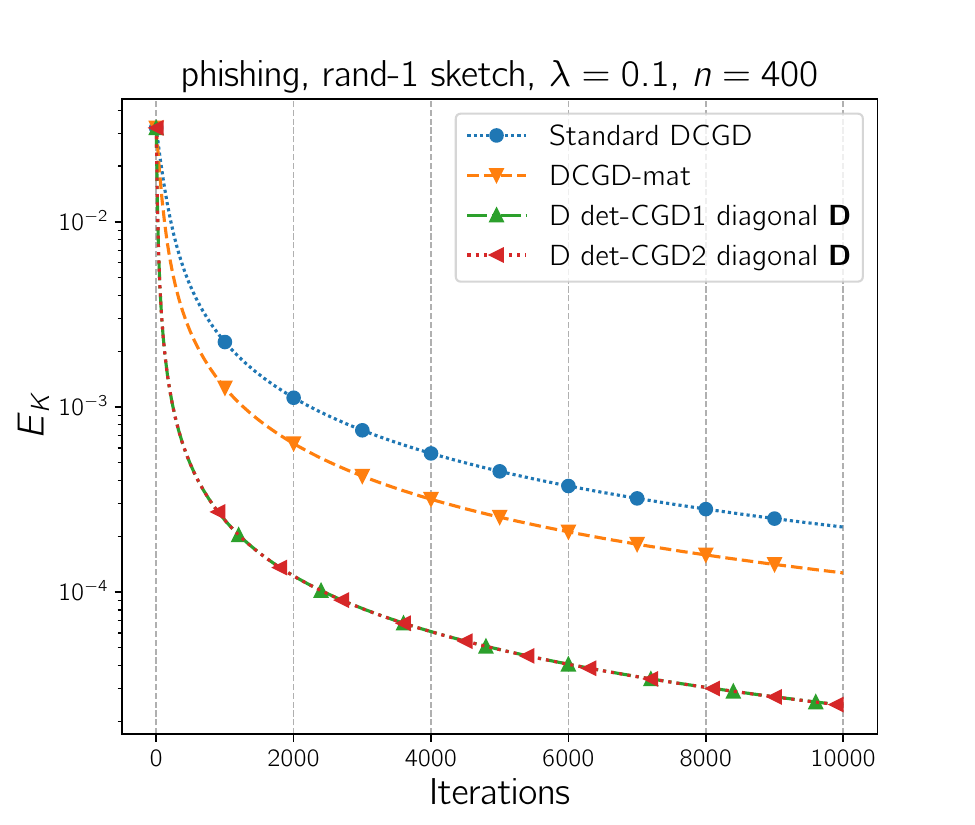}
        \includegraphics[width=0.32\textwidth]{./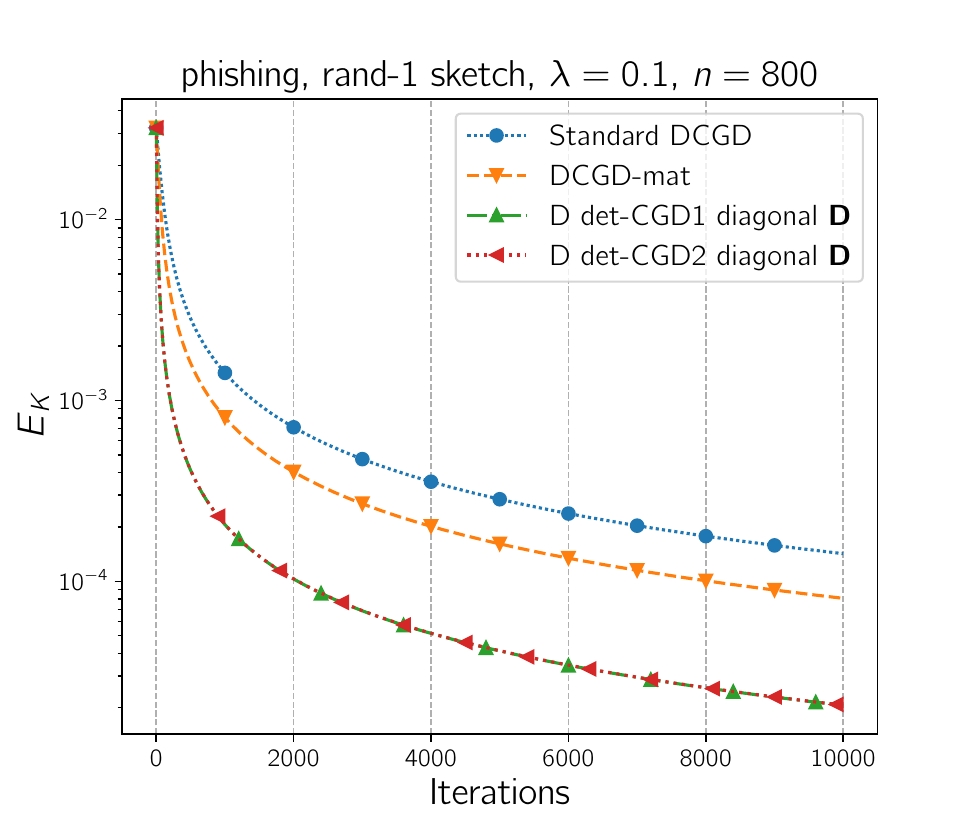}
	\end{minipage}
    }
    
    \caption{Comparison of standard DCGD, DCGD-mat, D-\ref{eq:alg1} with matrix stepsize $\mD_1$ and D-\ref{eq:alg2} with matrix stepsize $\mD_2$, where $\mD_1, \mD_2$ are the optimal diagonal matrix stepsizes for D-\ref{eq:alg1} and D-\ref{eq:alg2} respectively. Rand-$1$ sketch is used in all the algorithms throughout the experiment. The $y$-axis is now standard Euclidean norm defined in \eqref{eq:standard-euc}.}
    \label{fig:experiment-4-sub-2}
\end{figure}
\medskip

\end{document}